\def\cf{\textit{cf.}\kern.3em}
\def\resp{\textit{resp.}\kern.3em}
\renewcommand{\k}{\kern2pt}
\numberwithin{equation}{section} \makeatletter
\let\emptyset\varnothing
\DeclareMathOperator{\pic}{Pic}
\DeclareMathOperator{\spec}{Spec}
\DeclareMathOperator{\Vrt}{Vert}
\newtheorem{proposition}[equation]{Proposition}
\newtheorem{theorem}[equation]{Theorem}
\newtheorem*{theorem*}{Theorem}
\newtheorem{corollary}[equation]{Corollary}
\newtheorem{lemma}[equation]{Lemma}
\theoremstyle{definition}
\newtheorem{definition}[equation]{Definition}
\newtheorem{remark}[equation]{\textbf{Remark}}
\newtheorem{notation}[equation]{\textbf{Notation}}
\newtheorem{example}[equation]{\textbf{Example}}
\newcommand{\m}[2]{$\mathcal{M}_{#1,#2}$}
\newcommand{\Pp}[1]{\mathbb P^{#1}}
\newcommand{\C}{\mathbb C}
\newcommand{\R}{\mathbb R}
\newcommand{\Q}{\mathbb Q}
\newcommand{\Z}{\mathbb Z}
\newcommand{\co}{\colon\thinspace}
\newcommand{\Mm}[2]{\mathcal{M}_{#1,#2}}
\newcommand{\Mmb}[2]{\overline{\mathcal{M}}_{#1,#2}}
\newcommand{\NR}[1]{{\mathcal{M}}^{\operatorname{NR}}_{1,#1}}
\DeclareMathOperator{\NRop}{NR}
\newcommand{\Jm}[3]{{\mathcal{J}}^{#1}_{#2,#3}}
\newcommand{\Jmb}[3]{\overline{\mathcal{J}}^{#1}_{#2,#3}}
\newcommand{\Cmb}[2]{\overline{\mathcal{C}}_{#1,#2}}
\newcommand{\CNR}[1]{{\mathcal{C}}^{\operatorname{NR}}_{1,#1}}
\newcommand{\HS}{\mathsf{HS}}
\newcommand{\bfa}{\mathbf{a}}
\newcommand{\bfb}{\mathbf{b}}
\newcommand{\bfc}{\mathbf{c}}
\newcommand{\Simp}[2]{\operatorname{Simp}^{#1}({#2})}
\newcommand{\coh}[2]{H^{#1}({#2})}
\newcommand{\cohc}[2]{H_c^{#1}({#2})}
\newcommand{\pu}{\bullet}
\newcommand{\hodgechar}[2]{\mathbf{e}_{\HS_\Q}^{\s_{#1}}(#2)}
\newcommand{\s}{\mathfrak S}
\definecolor{forestgreen}{rgb}{0.13, 0.55, 0.13}
\newcounter{note}
\begin{document}

\def\fcj{fine compactified Jacobian\xspace}
\def\fcuj{fine compactified universal Jacobian\xspace}

\def\fcjs{fine compactified Jacobians\xspace}

\def\fcujs{fine compactified universal Jacobians\xspace}

\date{}
\title{Geometry of genus one fine compactified universal Jacobians}
\author[N. Pagani]{Nicola Pagani}
\address{Department of Mathematical Sciences, Mathematical Sciences Building, Liverpool L69 7ZL, United Kingdom}
\email[Nicola Pagani]{pagani@liv.ac.uk}

\author[O. Tommasi]{Orsola Tommasi}
\address{Dipartimento di Matematica ``Tullio Levi-Civita'', University of Padova, via Trieste 63, I-35121 Padova, Italy}
\email[Orsola Tommasi]{tommasi@math.unipd.it}

\begin{abstract}
 We introduce a general abstract notion of fine compactified Jacobian for nodal curves of arbitrary genus.
 
 We  focus on genus $1$ and prove combinatorial classification results for fine compactified Jacobians in the case of a single nodal curve and in the case of the universal family $\Cmb 1n / \Mmb 1n$ over the moduli space of stable pointed curves. 
 We show that if the fine compactified Jacobian of a nodal curve of genus $1$ can be extended to a smoothing of the curve, then it can be described as the moduli space of stable sheaves with respect to some polarisation.
 In the universal case we construct new examples of genus $1$ fine compactified universal Jacobians.

Then we give a formula for the Hodge and Betti numbers
 of each genus $1$ fine compactified universal Jacobian $\Jmb d 1n$ and prove that their even cohomology is algebraic. 
\end{abstract}
\maketitle
\setcounter{tocdepth}{2}

\section{Introduction}

A classical construction from XIX century mathematics associates with 
every nonsingular complex projective curve its \emph{Jacobian}, a complex projective variety of dimension equal to the genus of the curve. A similar construction is available for singular curves, but the resulting Jacobian variety fails in general to be compact. The natural problem of compactifying the Jacobian variety of singular curves has seen multiple approaches and generated a large body of work starting from the mid XX century.  

Inspired by this literature, in the present paper we introduce a general notion of fine compactified Jacobian for families of stable complex curves, and then investigate the geometry and combinatorics of the objects constructed using this definition. We focus on the genus $1$ case where we produce complete classification results, and an explicit description of the topology of these objects.

Let us recall that for any fixed integer $d$, a nonsingular curve $C$ admits a degree~$d$ Jacobian, which is the moduli space $J_C^d$ of degree $d$ line bundles on $C$. 
The construction of the Jacobian works for smooth families of curves. For all $(g,n)$ with $2g+n>2$ and all integers $d$, there exists a universal Jacobian $\Jm{d}{g}{n}$ endowed with a smooth fibration $\Jm{d}{g}{n} \to \Mm{g}{n}$ whose fibre over a pointed curve $(C, p_1, \ldots, p_n)$ is $J_C^d$. The problem of extending the universal Jacobian to the universal family over the Deligne--Mumford compactification $\Mmb gn$ has a long history, beginning with Caporaso \cite{caporaso}, Simpson \cite{simpson} and Pandharipande \cite{panda} in the $n=0$ case. 
Since the moduli space of line bundles of degree $d$ on a nodal curve $C$ is not compact as soon as $C$ has nonseparating nodes,
 the problem of compactifying the universal Jacobian is intimately related to the problem of compactifying the Jacobians of nodal curves. 

Here we consider compactifications whose limit points parametrise degenerations of line bundles, which we will take to be rank~$1$ torsion-free simple sheaves (for brevity we will just refer to them as \emph{simple sheaves} in the rest of this introduction). 

The moduli space of simple sheaves of some fixed degree $d$ was constructed by Altman--Kleiman in \cite{altman80} and then by Esteves \cite{esteves} as an algebraic space that satisfies the existence part of the valuative criterion of properness but that fails to be separated and of finite type.  Esteves' construction applies to families of nodal curves over a scheme, and by  \cite{melouniversal} also to the universal family $\Cmb{g}{n}/\Mmb{g}{n}$ over the moduli stack of stable $n$-pointed curves of genus~$g$.

In Definition~\ref{def:fcj} we introduce the notion of a degree~$d$ \fcuj $\Jmb dgn$ as an open and proper substack of the moduli stack of simple sheaves for the universal family over $\Mmb gn$. The fibre  $\overline{\mathcal{J}}^d(X)$ of $\Jmb dgn \to \Mmb gn$ over the point representing the stable curve  $X$ is an example of what we will call a ``\fcj{}'': a connected, open and proper subscheme of the moduli space of simple sheaves on $X$. The compactifications of the universal Jacobian constructed in \cite{melouniversal} are examples of \fcujs{}; the same examples were obtained in \cite{kp3} by generalising to the universal curve over $\Mmb gn$ the approach used by  Oda--Seshadri \cite{oda79} for the case of a single nodal curve. 

After these general definitions, we focus our attention on the first nontrivial case, i.e. the case of genus $g=1$.  It is a classical result that the Jacobian variety of an irreducible nodal curve of arithmetic genus $1$ is  isomorphic to the curve itself, and that the Jacobian variety of a nonsingular curve of genus $0$ is a point. Something similar holds more in general for \fcjs. 

Let us recall that every nodal curve $X$ of genus~$1$ is isomorphic to a unique necklace subcurve $X'$ of $X$ with rational tails attached to it, where by a necklace curve we mean a curve of arithmetic genus $1$ which is either irreducible, or which consists of rational components glued each to the next in a cyclic way. 
 All simple sheaves in a fixed \fcj have the same degree when restricted to the components of a rational tail of $X$, hence rational tails contribute trivially to the geometry of \fcjs (Corollary~\ref{forgettails}). In fact, as it turns out, all \emph{smoothable} \fcjs (see Definition~\ref{smoothable})  are isomorphic to the necklace subcurve~$X'$ (Proposition~\ref{prop:I_n}). 
 
More specifically, there is a natural bijection between the singular locus of $\overline{\mathcal J}^d(X)$ and that of $X'$: the singular locus of $\overline{\mathcal J}^d(X)$ consists of sheaves that fail to be locally free exactly at one node of $X'$, and the bijection maps each such sheaf to the corresponding node. This bijection induces a cyclic ordering on the nodes of $X'$, and our first result is that this cyclic ordering essentially identifies the fine compactified Jacobian of $X$:

\begin{theorem*} (Corollary~\ref{cor:fcjs})
There is a natural bijection between the set of smoothable \fcjs{} of a nodal genus $1$ curve $X$ up to equivalence by translation, and the set of cyclic orderings of the  nonseparating nodes of $X$.
\end{theorem*}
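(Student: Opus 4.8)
The plan is to read the cyclic ordering off the necklace structure of the \fcj, and then to match these combinatorial data against stability conditions. By Corollary~\ref{forgettails} we may assume $X=X'$ is a necklace curve; write its nonseparating nodes as $n_1,\dots,n_k$ and its components as $C_1,\dots,C_k$ with $n_i=C_i\cap C_{i+1}$ (indices modulo $k$), the case $k=1$ being the irreducible nodal cubic. Fix a smoothable \fcj\ $\Jbs{d}{X}$. By Proposition~\ref{prop:I_n} it is again a genus~$1$ necklace, so its $k$ nodes carry a canonical cyclic ordering: the order in which they are met when traversing the cycle of its components. Transporting this along the bijection recalled above between the singular points of $\Jbs{d}{X}$ and the nodes of $X$---sending a sheaf that fails to be locally free at $n_i$ to $n_i$---yields a cyclic ordering $\Phi(\Jbs{d}{X})$ of $\{n_1,\dots,n_k\}$. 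Establishing the theorem amounts to showing that $\Phi$ descends to a bijection on translation classes.

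First I would recast $\Phi$ combinatorially. The smooth locus of $\Jbs{d}{X}$ is a disjoint union of $\pic^0(X)=\C^*$-torsors, one for each \emph{stable} multidegree, and these are exactly the components of the necklace; the node of $\Jbs{d}{X}$ lying over $n_i$ is the unique non-locally-free stable sheaf there, and it is the common limit of the two torsors whose multidegrees differ by $e_{i+1}-e_i$. Thus components and nodes assemble into a graph $G$ with vertices the stable multidegrees and with edge $n_i$ joining $\underline d$ to $\underline d+e_{i+1}-e_i$; since $\Jbs{d}{X}$ is a $k$-cycle, $G$ is a single $k$-cycle and $\Phi(\Jbs{d}{X})$ is the cyclic sequence of labels along it. Because the lattice $\Z^k$ of multidegrees orients each edge (say in the direction of $+(e_{i+1}-e_i)$), the cycle $G$ is in fact canonically oriented.

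The remaining steps are then clean. Tensoring every sheaf by a fixed line bundle shifts all stable multidegrees by one constant vector, carrying $G$ to an isomorphic labelled cycle, so $\Phi$ is constant on translation classes. For injectivity, a cyclic sequence $n_{i_1},\dots,n_{i_k}$ together with a single starting multidegree determines every vertex by successively adding $e_{i_j+1}-e_{i_j}$, recovering the whole stable set, and hence $\Jbs{d}{X}$, up to one global translation. That any cyclic ordering closes up is forced by the telescoping identity
\[
\sum_{j=1}^{k}\bigl(e_{i_j+1}-e_{i_j}\bigr)=\sum_{i=1}^{k}\bigl(e_{i+1}-e_i\bigr)=0,
\]
which is moreover the unique linear relation among the vectors $e_{i+1}-e_i$; in particular every proper partial sum is nonzero, so the $k$ resulting multidegrees are distinct.

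The main obstacle is surjectivity, i.e.\ realisability. Given a cyclic ordering one obtains, as above, a closed $k$-cycle of distinct multidegrees; one must exhibit a genuine stability condition whose stable locus is \emph{exactly} these multidegrees, with no strictly semistable sheaf, so that the space produced is a \fcj\ isomorphic to the necklace. Here I would appeal to the explicit combinatorial classification of genus~$1$ stability conditions proved in the theorem preceding this corollary, matching its chambers with the oriented cycles above; the telescoping identity guarantees closure, but verifying that each such cycle is cut out by an actual chamber---and reconciling the canonical orientation of $G$ with the notion of cyclic ordering used in the statement---is the substantive point.
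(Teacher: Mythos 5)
Your forward construction is sound and tracks the paper's own route closely: by Corollary~\ref{forgettails} you reduce to the necklace subcurve, Proposition~\ref{prop:I_n} gives that a smoothable \fcj{} is a necklace with the same number $k$ of components, and your graph $G$ (vertices the multidegrees of the torsors $\mathcal J^{\mathbf d}(X)$, edges the nodes of $\overline{\mathcal J}^d(X)$ realised as common limits of adjacent torsors) is exactly the combinatorial datum extracted in the forward direction of Lemma~\ref{description-I_n}, specialised to the case $m=n$. Your translation-invariance and injectivity arguments are correct. Two of your assertions are under-justified, though both are repairable with your own telescoping identity: (i) that each node of $X$ is the singular point of \emph{exactly one} sheaf of $\overline{\mathcal J}^d(X)$ --- so that the label sequence is genuinely a cyclic ordering of the nodes rather than a sequence with repetitions --- follows because the closing-up relation forces all labels to occur equally often among the $k$ edges; and (ii) the consistency of the orientation of $G$ around the cycle is \emph{not} a formal consequence of ``the lattice orients each edge'' but of separatedness and properness of $\overline{\mathcal J}^d(X)$: at each vertex exactly one incident edge must point in and one out, which is the ``type~1 versus type~2'' gluing analysis in the proof of Lemma~\ref{description-I_n}.

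The genuine gap is the one you flag yourself: surjectivity is never proved. You must show that \emph{every} cyclic ordering is realised by a \emph{smoothable} \fcj, and your proposal defers this to ``matching chambers of stability conditions'' without carrying it out; note that realisability (the prescribed union of torsors and nodes is open, connected and proper in $\Simp dX$) and smoothability of the resulting \fcj{} are two separate issues, and you address neither. Note also that the result you gesture at is misidentified: what precedes the corollary in the paper is a classification of \fcjs, not of stability chambers; the chamber description is Remark~\ref{comparenodal}, quoted from the literature. The paper closes both issues as follows: the converse direction of Lemma~\ref{description-I_n} shows by an explicit gluing construction inside $\Simp dX$ that any cyclic ordering (the case $\rho=1$, where the non-repetition hypothesis is vacuous) defines an open, connected and proper subscheme, hence a \fcj; and such a \fcj{} is smoothable because it is the $\phi$-stable locus for an explicit nondegenerate polarisation --- the averaging formula of Proposition~\ref{singleispolarised}, whose verification uses only the explicit form \eqref{description-Y} --- and polarised \fcjs{} are smoothable by Proposition~\ref{polarisedarefine} via Esteves' theory. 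Alternatively, one can finish by counting: modulo translation there are exactly $(n-1)!$ nondegenerate chambers (Remark~\ref{comparenodal}), each yielding a smoothable \fcj, and your injectivity then forces these to hit all $(n-1)!$ cyclic orderings. Without one of these arguments your map is only known to be an injection.
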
 
\noindent Here with \emph{equivalent by translation} for two compactified Jacobians $\overline {\mathcal{J}}(X)$ and $\overline{\mathcal{J}}'(X)$ of a curve $X$  we mean that there exists a line bundle $L$ on $X$ such that tensoring by $L$ defines an isomorphism $\overline{ \mathcal{J}}(X) \xrightarrow{\sim} \overline{\mathcal{J}}'(X)$.

A consequence  is that each smoothable fine compactified Jacobian of a genus~$1$ nodal curve arises %
as the moduli space of simple sheaves that are stable with respect to some polarisation, and the geometry and combinatorics of those have been studied in \cite[Section~7]{meravi}. 
We also exhibit examples of \fcj, i.e. of  connected, open and proper subschemes of the moduli space of simple sheaves on a genus $1$ stable curve, that are \emph{not} smoothable.

Our second  result is a combinatorial classification of all genus $1$ fine compactified \emph{universal} Jacobians.

\begin{theorem*}(Theorem~\ref{f-and-g})
The degree~$d$ \fcujs  over $\Mmb 1n$ are naturally identified with 
the pairs $(f,g)$ where:
\begin{enumerate}
    \item the function $g$ is an integer valued function on the set of all subsets of  $\{1,\dots,n\}$ containing at least $2$ elements, and
\item the function $f$ is an integer valued function on the set of all nonempty subsets of $\{1, \ldots,~n~-~1\}$
that satisfies the following mild superadditivity condition:
\[
0\leq f(I\cup J) - f(I) - f(J) \leq 1 \text{ for all }I,J \text{ such that } I \cap J = \emptyset.
\]
\end{enumerate} 

\end{theorem*}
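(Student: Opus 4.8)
The plan is to build the identification in both directions by working stratum by stratum on $\Mmb 1n$ and invoking the single-curve classification. A degree~$d$ \fcuj $\Jmb d1n$ restricts over each point $[X]$ to a \fcj $\Jbs dX$, and by relative properness and openness this restriction depends only on the topological type of $X$ together with a choice of \fcj for that type; conversely $\Jmb d1n$ should be recoverable from these fibrewise choices once they are glued compatibly along specialisations. Since every genus~$1$ curve is a necklace with rational tails, and rational tails contribute trivially by Corollary~\ref{forgettails}, the data splits into a ``tail part'' and a ``necklace part'', which I expect to match the functions $g$ and $f$ respectively. The first task is therefore to make precise the statement that a \fcuj is equivalent to such a compatible system of fibrewise \fcjs{}.

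For the tail part, consider the boundary divisor of $\Mmb 1n$ whose generic point parametrises an elliptic core with a single rational tail carrying the markings indexed by $I$, for each $I\subseteq\{1,\dots,n\}$ with $|I|\ge 2$ (the bound $|I|\ge 2$ being forced by stability of the genus~$0$ tail). Although each fibre over this divisor is, as an abstract \fcj, independent of the tail by Corollary~\ref{forgettails}, the \fcuj must still specify the common degree of the sheaves along the tail; I would record this integer as $g(I)$. Because distinct tails are independent and this degree is subject to no constraint, $g$ can be an arbitrary integer-valued function on the subsets of size at least~$2$. This is the easy half of the bijection.

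For the necklace part, I would use Corollary~\ref{cor:fcjs}: over a necklace curve the smoothable \fcjs{} are classified, up to translation, by cyclic orderings of the nonseparating nodes. One first checks that the fibres of a \fcuj are smoothable (Definition~\ref{smoothable}), since they extend over smoothings inside the universal family, so that the classification applies; compare Proposition~\ref{prop:I_n}. The substance is then to translate a \emph{compatible} system of such cyclic orderings, one for each necklace stratum and coherent under specialisation, into a single integer-valued function $f$. Concretely, a nonseparating node of a necklace splits the markings into two arcs, and I would let $f(I)$ encode, in normalised form, the degree that the chosen cyclic ordering assigns to the arc containing the markings of $I$; the reason $f$ is defined only on subsets of $\{1,\dots,n-1\}$ is that fixing the total degree~$d$, equivalently using the marking $p_n$ as a basepoint to kill the translation ambiguity around the cycle, removes exactly one coordinate. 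Under this dictionary the inequality $0\le f(I\cup J)-f(I)-f(J)\le 1$ for disjoint $I,J$ should be precisely the condition that the assignment comes from an honest cyclic ordering and remains consistent under every specialisation, i.e.\ that the resulting fibrewise stability conditions are everywhere proper, connected and free of strictly semistable sheaves.

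The main obstacle is exactly this last translation: showing both that a \fcuj forces the superadditivity inequalities and that, conversely, every superadditive $f$ (together with any $g$) reconstructs a genuine \fcuj. For the forward direction I would analyse the codimension~$2$ strata where two nonseparating nodes degenerate together and match the local cyclic orderings of the two adjacent smaller strata; the constraint that these agree should produce exactly the bounds $0\le\cdot\le 1$. For the converse I would build from $(f,g)$ a universal numerical polarisation and invoke the fact, recalled after Corollary~\ref{cor:fcjs}, that smoothable \fcjs{} are moduli spaces of sheaves stable with respect to a polarisation; superadditivity should guarantee that the stability is nondegenerate, hence yields an open, proper and connected substack of the moduli of simple sheaves, that is, a \fcuj. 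Verifying that the two constructions are mutually inverse then gives the asserted natural identification.
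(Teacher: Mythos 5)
Your converse construction is the step that fails. You propose to assemble $(f,g)$ into a \emph{single} universal numerical polarisation $\phi\in V^d_{1,n}$ and define the \fcuj{} as the locus of $\phi$-stable sheaves, with mild superadditivity ensuring nondegeneracy. But the paper's Example~\ref{exotic} exhibits, for every $n\geq 6$ and suitable $g$, a mildly superadditive $f$ whose associated \fcuj{} equals $\Jmb d1n(\phi)$ for \emph{no} $\phi\in V^d_{1,n}$: the constraints~\eqref{mildsuperphi} that such a $\phi$ would have to satisfy for $I=\{1,3,5\},\{2,3,5\},\{1,4,5\},\{2,4,5\}$ force both $x_1>x_2$ and $x_1<x_2$. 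So any proof of the surjectivity half of the bijection that outputs one global polarisation cannot be correct; indeed, the failure of realisability by universal polarisations is one of the main consequences the paper draws from this theorem. What is true (Proposition~\ref{singleispolarised}) is that each \emph{fibre} over a necklace curve is cut out by a polarisation, but these fibrewise polarisations need not assemble into an element of $V^d_{1,n}$. The paper's Lemma~\ref{existenceofj} circumvents this with a local-to-global argument: it covers $\Mmb 1n$ by the open substacks $\mathcal U_\Gamma$ of curves specialising to the deepest strata $X_\Gamma$, uses Corollary~\ref{surjectivegenus1} to extend the fibrewise polarisation $\phi'_\Gamma\in V^d(\Gamma)$ to some universal $\phi_\Gamma\in V^d_{1,n}$, and defines the sought stack over $\mathcal U_\Gamma$ as $\Jmb d1n(\phi_\Gamma)|_{\mathcal U_\Gamma}$; compatibility of the multidegree data makes these restrictions agree on overlaps, and openness and properness are checked chart by chart. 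This use of \emph{different} polarisations on different charts is the essential idea missing from your proposal.

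Two further points. First, for the tail datum you assert that $g$ is unconstrained ``because distinct tails are independent'', but the actual content (the paper's Lemma~\ref{g-for-rattails}) is that compatibility \emph{uniquely} determines the line bundle multidegrees on every configuration of multi-component rational tails from the values $g(I)$ on one-component tails; this is a nontrivial linear-algebra statement (a linear system shown to have determinant $\pm1$, quoted from \cite[Lemma~3.9]{kp2}), not an independence assertion, and without it the pair $(f,g)$ is not known to determine the \fcuj{}. Second, concerning where the inequalities come from: for a fixed cyclic ordering they are exactly the single-curve classification (Proposition~\ref{f-for-fcjs}, on sets of consecutive markings); the cross-strata compatibility you invoke is what makes the various functions $f_\Gamma$ glue to one function defined on \emph{all} nonempty subsets of $\{1,\dots,n-1\}$, and superadditivity for arbitrary disjoint $I,J$ then follows by choosing a cyclic ordering in which $I$, $J$ and $I\cup J$ are all consecutive. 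Your codimension-two matching gestures at the gluing, but as stated it conflates the two mechanisms (where the inequalities originate versus why $f$ is well defined beyond consecutive sets) and establishes neither.
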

The functions $f$ and $g$ encode the information of the bidegrees of all elements $(X,L)$ of the given \fcuj{} $\Jmb d1n$ such that $L$ is a line bundle on a stable curve $X$ with two nonsingular irreducible components. The main point of the theorem is to show that from these data one can uniquely reconstruct the \fcuj.

As a consequence of this combinatorial description, we show in Example~\ref{exotic} that 
 for all $n\geq 6$ there are new examples of genus $1$ fine compactified universal Jacobians that cannot be obtained using the description of \cite{kp3} (or equivalently \cite{melouniversal}). 
 We also observe in Remark~\ref{groupaction} that for fixed $n$ there is only a \emph{finite} number of degree~$d$ fine compactified universal Jacobians  modulo translation by line bundles of relative degree $0$ on the universal curve $\Cmb 1n / \Mmb 1n$. 

Finally, we describe the rational cohomology groups of all genus~$1$ fine compactified universal Jacobians, together with their Hodge structures. Since the symmetric group $\s_n$ acts on every $\Jmb d1n$ by permuting the marked points, the cohomology groups carry a structure as an $\s_n$-representation, which we determine as well. 
It turns out that all genus $1$ \fcujs for fixed $n$  have the same rational cohomology groups, and the latter can be explicitly described in terms of the rational cohomology of the  moduli spaces of stable curves of genus $0$ and $1$. 
An analogous pattern of independence of the  choice of \fcj was observed in  \cite{mishvi} for \fcjs defined by a polarisation over a single nodal curve of arbitrary genus.
\begin{theorem*} (Theorem~\ref{main})
 The rational cohomology of any genus~$1$ fine compactified universal Jacobian $\Jmb{d}{1}{n}$ is described explicitly  as a graded vector space with Hodge structures and $\s_n$-representations by Formula~\eqref{mainformula} in terms of the known generating functions for the Hodge Euler characteristics of $\Mm1k$, $\Mm0m$ and $\Mmb0\ell$. 
\end{theorem*}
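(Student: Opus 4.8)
The plan is to compute the class of $\Jmb d1n$ in the Grothendieck group of rational mixed Hodge structures equipped with an $\s_n$-action, that is, the equivariant Hodge Euler characteristic $\hodgechar{n}{\Jmb d1n}$, by means of a stratification, and only afterwards to upgrade this Euler characteristic to a degree-by-degree description of the cohomology by a purity argument. I would begin by recording the two formal properties of $\hodgechar{n}{-}$ that drive the computation: it is additive over decompositions into $\s_n$-invariant locally closed pieces, through the long exact sequences in compactly supported cohomology, and it is multiplicative for Zariski-locally-trivial fibrations whose fibres are torsors under tori, or products of copies of $\mathbb G_m$ with a genus~$1$ factor. Because $\Jmb d1n$ is proper, these let me work entirely with the class in the Grothendieck group and postpone the question of individual cohomology groups.

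Next I would stratify $\Jmb d1n$ according to the dual graph of the pair $(X,L)$: by the boundary stratum of $\Mmb 1n$ over which we sit, refined by the set of nodes of $X$ at which the sheaf $L$ fails to be locally free. Over each stratum the restriction of $\Jmb d1n$ is a fibration over the corresponding product $\prod_v \Mm{g_v}{n_v}$ of moduli of pointed curves, with fibre the Jacobian of the partial normalisation of $X$ at the non-locally-free nodes, i.e. a product of copies of $\mathbb G_m$ with the Jacobian of the genus~$1$ part. Here the reduction results of the genus~$1$ theory are essential: by Corollary~\ref{forgettails} the rational tails contribute trivially, and by Proposition~\ref{prop:I_n} together with Corollary~\ref{cor:fcjs} each fibre depends, up to the isomorphisms given by translation by line bundles, only on the underlying necklace and \emph{not} on the particular bidegrees selected by the pair $(f,g)$ of Theorem~\ref{f-and-g}. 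This is the step that simultaneously produces the \emph{independence} of the cohomology from the choice of \fcuj and makes each stratum contribution explicitly computable.

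I would then assemble the sum over strata into closed form. The strata are indexed by stable graphs of genus~$1$, namely either an irreducible genus~$1$ vertex or a single cycle of rational vertices, with trees of rational curves and marked points attached; the genus~$1$ vertex contributes factors $\hodgechar{k}{\Mm1k}$, the attached rational trees contribute factors built from $\hodgechar{m}{\Mm0m}$, and summing the tree contributions repackages them into the classes $\hodgechar{\ell}{\Mmb0\ell}$ of the compactified genus~$0$ moduli spaces. Carrying this out as a plethystic/exponential generating-function manipulation, while keeping track of the $\mathbb G_m$-factors coming from the necklace gluings, should produce exactly Formula~\eqref{mainformula}. The odd cohomology and the non-Tate Hodge structure all originate from the genus~$1$ pieces, i.e. from the genus~$1$ Jacobian fibre and from the factors $\Mm1k$, and these classes sit in odd degrees.

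Finally I would promote the Euler-characteristic identity to the claimed description of the graded cohomology. The main obstacle is ruling out cancellation between cohomological degrees, since an equality of Euler characteristics only constrains alternating sums. I expect to resolve this by a weight/purity argument showing that the relevant spectral sequences degenerate and that classes of a given weight can only occur in one degree, so that no cancellation is possible; combined with the algebraicity of the even cohomology and the identification of all odd and non-Tate classes as coming from the genus~$1$ factors, this pins down every graded piece together with its Hodge structure and $\s_n$-representation. This purity step, rather than the lengthy but mechanical generating-function bookkeeping, is where I anticipate the real work to lie.
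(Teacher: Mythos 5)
Your proposal is correct in substance and rests on the same three pillars as the paper's proof: (i) purity --- since $\Jmb d1n$ is a smooth proper Deligne--Mumford stack, each $\coh k{\Jmb d1n}$ is pure of weight $k$, so its class in $K_0^{\s_n}(\HS_\Q)$ already determines the graded cohomology with Hodge structures and $\s_n$-action; (ii) a stratification of $\Jmb d1n$ lying over the stratification of $\Mmb 1n$ by topological type refined by rational-tail structure; (iii) plethystic assembly \`a la Getzler. Where you genuinely diverge is in step (ii): you refine further by the set of nodes at which the sheaf fails to be locally free, so each piece becomes a $\mathbb G_m$-fibration (or a locus of isolated singular sheaves) over a product of spaces $\Mm{g_v}{n_v}$, and you must then redo the Getzler-type cyclic bookkeeping while tracking how monodromy and the $\s_n$-action permute the multidegree strata; this permutation action is exactly what Corollary~\ref{cor:fcjs} controls (singular sheaves correspond bijectively, hence equivariantly, to nodes), but your outline leaves this equivariance implicit. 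The paper short-circuits all of this with Lemma~\ref{stratawise}: over each stratum the whole restricted family $\Jmb d1n|_{\mathcal S}$ is isomorphic to the pull-back of the universal necklace curve $\CNR k\rightarrow\NR k$ (via Abel maps when the necklace has at most two components, and via rigidity of the pointed curve plus Proposition~\ref{prop:I_n} otherwise), after which the answer is a one-line modification of Getzler's formula~\eqref{getzler-2}: replace $\bfb_1^{\operatorname{NR}}$ by the universal-curve generating function $(1+p_1)(\bfb_1^{\operatorname{NR}})'$, yielding \eqref{mainformula}. Two calibration points: the purity step requires no spectral-sequence degeneration at all --- smoothness and properness give it immediately --- so the ``real work'' lies not there but in the family-level identification (Lemma~\ref{stratawise}), which is precisely what your fibrewise appeal to Proposition~\ref{prop:I_n} glosses over; and note that the isomorphism $\overline{\mathcal J}^{d'}(X')\cong X'$ of Proposition~\ref{prop:I_n} comes from Kodaira's classification of elliptic fibrations, not from translation by line bundles, so ``up to translation'' is not the mechanism that makes the fibres independent of the choice of \fcuj.
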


(We stress that our main result applies to \emph{all} genus~$1$ fine compactified universal Jacobians, not just those that are obtained as moduli of simple sheaves that are stable with respect to some polarisation.)

Theorem~\ref{main} is obtained by introducing a refinement of the stratification of $\Mmb 1n$ by topological type via the forgetful map $\Jmb d1n \to \Mmb 1n$. It is  a consequence of Petersen's results in \cite{petersen} that the boundary strata classes span  the even cohomology of $\Mmb{1}{n}$ (in particular, the even cohomology of $\Mmb{1}{n}$ is all algebraic). In Corollary~\ref{boundary} we observe that an analogous result holds for the even cohomology of $\Jmb{d}{1}{n}$. We describe the stratification of the latter moduli space in Corollary~\ref{strata}. %

Finally, let us remark that,  whilst the additive rational cohomology of $\Jmb{d}{1}{n}$ does not depend on the particular \fcuj, we expect that there exist nonisomorphic \fcujs for fixed $n$. For example, it is shown in \cite[Section~6.2]{kp3} that for all $n\geq 4$ there exist   genus $1$ \fcujs that are not isomorphic over $\Mmb 1n$ (i.e. there exists no isomorphism that respects the forgetful morphisms).

\subsection{Acknowledgements}
The first named author owes a deep debt of gratitude  to Jesse Kass for the many discussions and clarifications on compactified Jacobians, and for contributing in a substantial way to shaping the definition of a fine compactified Jacobian of a nodal curve. He is also very grateful to Margarida Melo and Filippo Viviani for helpful discussions.

The second author would like to thank Jonas Bergstr\"om for sharing with her his tables on the Hodge Euler characteristic of $\Mm1n$. 

We are grateful to Jonathan Barmak for his help with the combinatorics of Example~\ref{exotic}. We would also like to thank Dan Petersen for a helpful discussion of his work \cite{petersen} and Alex Abreu, Eduardo Esteves and Andrea Ricolfi for providing useful comments on an earlier version of this paper.

Finally, we would like to thank an anonymous referee for their suggestions to improve the paper. 

During the early stages of this project NP was supported by EPSRC grant EP/P004881. OT is a member of INdAM-GNSAGA and was partially supported during the preparation of this paper by the BIRD-SID 2019 grant \emph{Structures on the cohomology of moduli spaces and stratified spaces} of the University of Padova.

\subsection{Notation}

We work over the category of schemes of finite type over $\mathbb{C}$. 

A \emph{nodal curve} is a reduced and connected projective scheme of dimension $1$ over~$\C$ with singularities that are at worst ordinary double points. 
If $X$ is a nodal curve, a \emph{subcurve} $Y$ of $X$ is a connected union of irreducible components of $X$. 
A subcurve $Y \subseteq X$ is called a \emph{rational tail} if the arithmetic genus of $Y$ equals zero and $Y \cap  \overline{Y^c}$ consists of $1$ point (here $Y^c=X\setminus Y$ denotes the complement of $Y$ in $X$, and $\overline{Y^c}$ is the closure of the latter in $X$). 

A coherent sheaf on a nodal curve $X$ has \emph{rank~$1$} if its localisation at each  generic point of $X$ has length $1$. It is \emph{torsion-free} if it has no embedded components. 
If the stalk of a torsion-free sheaf $F$ over $X$ fails to be locally free at a point $P\in X$, we will say that $F$ is \emph{singular} at $P$.
If $F$ is a rank~$1$ torsion-free sheaf on $X$ we say that $F$ is \emph{simple} if its automorphism group is $\mathbb{G}_m$ or, equivalently, if removing from $X$ the singular points of $F$ 
 does not disconnect $X$. 
A \emph{family of nodal curves} over a $\mathbb{C}$-scheme $S$ is a proper and flat morphism $X \to S$ whose fibres are nodal curves. We will always make the additional hypothesis that a family $X/S$ admits a section in the $S$-smooth locus of $X$.

If $T$ is a $S$-scheme, a \emph{family of rank~$1$ torsion-free simple sheaves} parametrised by $T$ over a family of curves $X \to S$ is a  coherent sheaf $F$ of rank~$1$  on $X\times_S T$, flat over $T$, whose fibres over the geometric points are torsion-free and simple.

If $(X,p_1,\dots,p_n)$ is a stable $n$-pointed  curve, we will denote by $\Gamma(X)$ its dual graph, i.e. the labelled graph with vertices, edges and half-edges corresponding to the irreducible components of $X$ labelled by their geometric genus, nodes of $X$ and marked points labelled from $1$ to $n$, respectively.

The dual graph $\Gamma(X)$ is an object of the category of stable graphs of genus~$g$ with $n$ marked half-edges (see \cite[Section~2]{kp3} for their definition and more details).  A morphism $f\colon\Gamma \rightarrow \Gamma'$ of stable graphs is defined as a sequence of strict contractions of edges (in the sense of \cite[Section~2.2]{kp3}) followed by an isomorphism of graphs. For every $g$ and $n$ we will choose a representative for each of the finitely many isomorphism classes of stable graphs of genus~$g$ with $n$ half-edges and denote by  $G_{g,n}$ the full subcategory on the chosen objects. 

If $F$ is a rank~$1$ torsion-free sheaf on a nodal curve $X$ with irreducible components $X_i$, we denote by $F_{X_{i}}$  the maximal torsion-free quotient of $F \otimes \mathcal{O}_{X_{i}}$, and then define the \emph{multidegree} of $F$ by \[{\deg}(F) := (\deg(F_{X_{i}})) \in \mathbb{Z}^{\operatorname{Vert}(\Gamma(X))}.\]  We define the \emph{(total) degree} of $F$ to be $\deg_X(F):=\chi(F)-1+p_a(X)$ where $p_a(X)= h^1(X, \mathcal{O}_X)$ is the arithmetic genus of $X$. The total degree and the multidegree of $F$ are related by the formula $\deg_X(F) = \sum \deg_{X_i} F - \delta(F)$, where $\delta(F)$ denotes the number of singular points of $F$.

\label{notation}
\section{Fine compactified Jacobians} 
\label{generalfine}

The main topic of this paper is the study of \fcujs{} in genus~$1$. We start this section by introducing the notion of a \fcuj{} as a substack of the moduli stack of rank~$1$ torsion-free simple sheaves for the universal family on the moduli stack of curves with marked points. 
A more thorough analysis of the theory of \fcujs in genus larger than $1$ will be developed in \cite{kasspa2}.

Let $X/S$ be a family of nodal curves over a scheme $S$, which we always assume to admit a section in the $S$-smooth locus of $X$. %
Then its \emph{generalised Jacobian}  $\mathcal{J}^{\mathbf 0}(X/S)$ is the moduli space of line bundles of fibrewise degree $0$ on all irreducible components. It is well known that $\mathcal{J}^{\mathbf 0}(X/S)$ is a smooth, separated group scheme of relative dimension $g$ over $S$ (see \cite[Chapter~9.4]{blr}) that can be characterised as the connected component containing the identity in the relative Picard variety of $X/S$.

By \cite{altman80} and \cite{esteves}, the functor that associates with every $S$-scheme $T$
\[
\Simp d{X/S}(T)=
\left\{
\begin{array}{c}\text{families  of rank $1$ torsion-free simple sheaves }\\ \text{of fibrewise degree $d$ parametrised by $T$  over $X/S$}\end{array}
\right\}
\]
is represented by an algebraic space $\operatorname{Simp}^{d}(X/S)$ with the following properties:
\begin{enumerate} 
\item\label{first} it is flat of relative dimension $g$, and  locally of finite type over $S$, 
\item it satisfies the existence part of the valuative criterion of properness,
\item it has connected and schematic fibres over the closed points,
\item\label{last} it admits an action of the generalised Jacobian $\mathcal{J}^{\mathbf 0}(X/S)$ which is free when restricted to the locus  $\pic^d(X/S) \subseteq \Simp d{X/S}$ parametrising line bundles. 
\end{enumerate}

The moduli functor has a generic $\mathbb{G}_m$-stabiliser and the algebraic space $\operatorname{Simp}^{d}(X/S)$ is obtained after rigidification of this stabiliser along one of the existing sections of the family.
Note that %
$\operatorname{Simp}^{d}(X/S)$ may fail to be separated over $S$ and it may fail to be of finite type (so it is not necessarily universally closed over $S$).

For each $g,n\geq 1$, the generalised Jacobian and  the moduli space  $\operatorname{Simp}^{d}(X/S)$ are also defined in the case where $X/S$ is the universal family $\Cmb{g}{n} / \Mmb gn$ over the moduli stack of stable $n$-pointed curves of genus~$g$ (see \cite{melouniversal}). %
In this case $\operatorname{Simp}^{d}(\Cmb{g}{n} / \Mmb gn)$ exists as a nonsingular Deligne--Mumford stack that is  representable over $\Mmb gn$, and it satisfies the additional properties \eqref{first}--\eqref{last} listed above. %

We now introduce the main object of study of this paper.%
\begin{definition}\label{def:fcj}
  	Let $X/S$ be a family of nodal curves. A degree $d$ \fcj $\overline{\mathcal{J}}^d(X/S)$  of the family $X/S$ is an open subspace of $\operatorname{Simp}^d(X/S)$ that is proper over $S$ and such that for all closed points $s \in S$,  the fibre  $\overline{\mathcal{J}}^d(X/S)_s$ is connected.
  	
  	A degree $d$ fine compactified universal  Jacobian $\Jmb dgn$ is a nonempty
		open substack of   $\operatorname{Simp}^{d}(\Cmb{g}{n}/\Mmb{g}{n})$ that is proper over $\Mmb{g}{n}$. 
\end{definition}
The compactified Jacobians in Definition~\ref{def:fcj} are called ``fine'' due to the existence of a tautological (or Poincar\'e) sheaf on them.

The present paper deals with \fcujs in the case $g=1$ (and $n\geq 1$).
The problem of classifying families of fine compactified universal Jacobians for nodal curves of arbitrary genus will be addressed in \cite{kasspa2}. 

Finally, let us also recall that \cite{meravi} gives a different definition of \fcj{} %
of a curve $X$. Their definition is only given in the case of compactified Jacobians obtained from some polarisation, but for a wider class of singular curves. When $X$ is a nodal curve, their definition yields a \fcj  in the sense of the present paper, see Section~\ref{notallpolar}.

\begin{remark}\label{numberconncomp}

It follows from the definition that a \fcuj has connected fibres over the closed points. This is why in Definition~\ref{def:fcj} we did not require this property in the universal case. 

In order to prove this, let us first point out that every \fcuj{}~$\Jmb dgn$ contains the degree~$d$ universal Jacobian~$\mathcal{J}^d_{g,n}=\Simp d{\mathcal{C}_{g,n}/\mathcal{M}_{g,n}}$. This follows from the fact that the restriction~$\Jmb dgn|_{\Mm gn}$ is nonempty for dimensional reasons, hence $\Jmb dgn|_{\Mm gn}$ is dense in~$\mathcal{J}^d_{g,n}$ because  the latter is irreducible.  As both spaces are proper over~$\Mm gn$, we deduce from density that the inclusion~$\Jmb dgn|_{\Mm gn} \subseteq \mathcal{J}^d_{g,n}$ is an equality.
 This also implies that the morphism~$\Jmb dgn \rightarrow \Mmb gn$ is surjective, as its image must contain the open dense subset~$\Mm gn$. 

To conclude the proof of connectedness of the fibres, let us recall from \cite[Theorem~4.17.(iii)]{delignemumford} that the number of connected components of the geometric fibres of a proper and flat morphism is a lower semicontinuous function. Then the claim follows from the fact that the fibres over points of~$\Mm gn$ are irreducible.
\end{remark}

	\begin{remark} \label{generalprop}
The pull-back of a degree $d$ \fcuj under a morphism $f \colon S \to \Mmb{g}{n}$ is a degree $d$ \fcj for the family $f^*\Cmb{g}{n}/S$. 
Indeed, openness, properness and having connected fibres over closed points are stable properties under base change. 
	\end{remark}
However, in general not all \fcjs of a family of nodal curves $X/S$ arise by pull-back from the universal family. For instance, a single nodal curve $X$ may have 
\fcjs that do not extend to any family with a smooth generic fibre, let alone to the whole universal family over $\Mmb gn$ (for some choice of $n$ marked points on $X$). We will give explicit examples of this phenomenon for $X$ a nodal curve of genus~$1$ in Section~\ref{genus1isolated}. This leads us to introduce the following crucial definition. 

\begin{definition}
A degree $d$ fine compactified Jacobian $\overline{\mathcal{J}}^d(X)$ of a nodal curve $X$ is \emph{smoothable} if there exists a smoothing
 $\mathcal{X}/T$ of $X$ over $T=\spec  \C [\![t]\!]$, i.e. a nonsingular scheme $\mathcal{X}$ over $T$ 
 such that the central fibre $\mathcal{X}_0$ equals $X$, together with a degree $d$ fine compactified Jacobian $\overline{\mathcal{J}}^d(\mathcal{X}/T)$ of the family $\mathcal{X}/T$ with central fibre $\overline{\mathcal{J}}^d(\mathcal{X}/T)_0$ equal to $\overline{\mathcal{J}}^d(X)$. \label{smoothable}
\end{definition}

It is well known that \fcjs exist. However, prior to this paper
all  examples of \fcjs in the literature were smoothable, and 
they were
obtained as moduli spaces of rank $1$ torsion-free sheaves that are semistable with respect to some polarisation. 
We will discuss their construction in the next subsection.

\subsection{Fine compactified Jacobians arising from  polarisations} 

\label{notallpolar}

There is a vast literature on compactified Jacobians arising from polarisations,
see for example \cite{oda79}, \cite{esteves} and \cite{simpson}. 
Although the meaning of the word ``polarisation'' varies in the different sources, these constructions can be adapted to produce the same fine compactified Jacobians. 
We will then see in Section~\ref{fcuj-classification} that there exist fine compactified universal Jacobians that cannot be obtained via any of these polarisations.

We will use the polarisation language of \cite{oda79}, which was adapted to the universal case by Kass and the first named author in \cite{kp3}. %
Let us review their constructions. We start with the space of polarisations.

\begin{definition} Given $\Gamma \in {G}_{g,n}$ a stable $n$-marked graph of genus~$g$, we define the space of polarisations
\[V^d(\Gamma):=\left\{\phi\in \mathbb{R}^{ \operatorname{Vert}(\Gamma)} :\ \sum_{v \in \operatorname{Vert}(\Gamma)} \phi(v) = d\right\} \subset \mathbb{R}^{\operatorname{Vert}(\Gamma)}.\] 

Then every morphism $f\colon\Gamma\rightarrow \Gamma'$ of stable marked graphs induces a morphism $f_*\colon V^d(\Gamma)\rightarrow V^d(\Gamma')$ by setting
\begin{equation} \label{phicompatible} f_*\phi(v') = \sum \limits_{f(v)=v'} \phi(v)
\end{equation}
and we define the space of universal polarisations as 
$V^d_{g,n}:=\varinjlim_{\Gamma\in G_{g,n}} V^d(\Gamma)$, i.e. as the space of assignments $\left(\phi(\Gamma)\in V^d(\Gamma)\colon \ \Gamma \in G_{g,n}\right)$ that are compatible with all graph morphisms.
\label{spaceofpolar}
\end{definition}

We are now ready to define the polarised fine compactified Jacobians.

\begin{definition} \label{phistab}	
 Let $X$ be a nodal curve with dual graph $\Gamma(X)$.	Given a polarisation $\phi \in V^d(\Gamma(X))$, we say that a rank~$1$ torsion-free simple sheaf $F$  of degree $d$ on $X$ is \emph{$\phi$-(semi)stable} if
	\begin{equation}
		\left| \deg_{X_0}(F)- \sum \limits_{v \in \operatorname{Vert}(\Gamma(X_0))} \phi(v) + \frac{\delta_{X_0}(F)}{2} \right| \underset{(\leq)}{<}  \frac{\#(\Gamma(X_0) \cap \Gamma(\overline{X_0^{c}}))-\delta_{X_0}(F)}{2}  %
		\label{Eqn: SymDefOfStability}
	\end{equation}
holds	for all proper subcurves  $\emptyset \subsetneq X_0 \subsetneq X$. Here $\delta_{X_0}(F)$ is 
the number of points in $X_0 \cap \overline{X_0^c}$ where $F$ is singular.

	We say that $\phi \in V^d(\Gamma(X))$ is \emph{nondegenerate} if every $\phi$-semistable sheaf is $\phi$-stable.  For $\phi\in V^d(\Gamma(X))$ nondegenerate, we define $\overline{\mathcal{J}}^d_{\phi}(X)$ to be the subscheme of $\operatorname{Simp}^d(X)$ parametrising  $\phi$-stable sheaves.

		Given a universal polarisation $\phi \in V_{g,n}^d$, we say that a family of rank~$1$ torsion-free sheaves of degree $d$ on a family of stable curves is \emph{$\phi$-(semi)stable} if Equation~\eqref{Eqn: SymDefOfStability} holds on all  fibres. We say that $\phi \in V_{g,n}^d$ is \emph{nondegenerate} if for all $\Gamma \in {G}_{g,n}$, the $\Gamma$-component $\phi(\Gamma)$ is nondegenerate in $V^d(\Gamma)$.	Finally, for $\phi\in V_{g,n}^d$ nondegenerate, we define $\Jmb dgn(\phi)$ to be the substack  of $\operatorname{Simp}^{d}(\Cmb{g}{n} / \Mmb gn)$ parametrising  $\phi$-stable sheaves on families of stable curves. \label{polarised}
\end{definition}

(In \cite{oda79} and \cite{kp3} the moduli functors of compactified Jacobians  are defined more generally, without assuming $\phi$ nondegenerate, and without assuming that the sheaves are simple. However, stable sheaves are always simple, see for example \cite[Lemma~2.18]{meravi}). 

As observed in \cite[Remark~4.6]{kp3}, the fine compactified (universal) Jacobians produced by this construction are the same as those defined by Esteves and Melo \cite{esteves,melouniversal}. 

Next, we explain how the moduli stacks defined in Definition~\ref{polarised} are indeed fine compactified Jacobians. 

\begin{proposition}
Let $X$ be a nodal curve with dual graph $\Gamma(X)$. For every nondegenerate
$\phi\in V^d(\Gamma(X))$, the moduli scheme $\overline{\mathcal{J}}^d_{\phi}(X)$ is a smoothable degree $d$ fine compactified Jacobian.

For every nondegenerate $\phi\in V_{g,n}^d$, the moduli stack $\Jmb dgn(\phi)$ is a fine compactified universal Jacobian. \label{polarisedarefine}
\end{proposition}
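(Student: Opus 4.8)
The plan is to verify, for the locus of $\phi$-stable sheaves, each of the three defining conditions of a \fcj{} from Definition~\ref{def:fcj}—openness, properness over the base, and connectedness of the closed fibres—and then to establish smoothability; the universal statement will follow from the same arguments together with a reduction along one-parameter families. Throughout I would exploit that $\operatorname{Simp}^d(X/S)$ already enjoys properties \eqref{first}--\eqref{last}, so that only the \emph{separatedness} and \emph{finite type} that it lacks need to be recovered from the stability condition.

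\emph{Openness and finite type.} First I would observe that $\phi$-stability is an open condition: in a flat family of rank~$1$ torsion-free simple sheaves the invariants $\deg_{X_0}(F)$ and $\delta_{X_0}(F)$ appearing in \eqref{Eqn: SymDefOfStability} are semicontinuous, and since $\phi$ is nondegenerate the locus on which all of the finitely many \emph{strict} inequalities hold is open. This shows that $\overline{\mathcal{J}}^d_\phi(X)$ (resp.\ $\Jmb dgn(\phi)$) is an open subspace (resp.\ substack) of $\operatorname{Simp}^d$. For finite type I would apply \eqref{Eqn: SymDefOfStability} with $X_0$ equal to each irreducible component $X_i$ in turn; the resulting inequalities bound every $\deg_{X_i}(F)$ from above and below, so only finitely many multidegrees occur among $\phi$-stable sheaves. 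As the simple sheaves of a fixed multidegree form a bounded family, $\overline{\mathcal{J}}^d_\phi(X)$ is quasi-compact; in the universal case one combines this with the finite type of $\Mmb gn$.

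\emph{Properness, the main point.} Here I would establish separatedness and universal closedness at once via the valuative criterion. Let $R$ be a discrete valuation ring with fraction field $K$, inducing a family of nodal curves $\mathcal{X}_R/\spec R$ (via a map $\spec R\to\Mmb gn$ in the universal case, and the constant family in the single-curve case), and let a $\phi$-stable sheaf on $\mathcal{X}_K$ be given. By the existence part of the valuative criterion that $\operatorname{Simp}^d$ satisfies, this sheaf extends to \emph{some} simple sheaf on $\mathcal{X}_R$; the various extensions differ by elementary modifications along the special fibre, which redistribute the multidegree. The crux is to show that, because $\phi$ is nondegenerate, \emph{exactly one} of these limits is $\phi$-stable: this gives existence of a $\phi$-stable limit (universal closedness) and its uniqueness (separatedness) simultaneously. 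I expect this chamber-type analysis of the inequalities \eqref{Eqn: SymDefOfStability} to be the main obstacle, as it is precisely the content that distinguishes $\phi$-stability from mere simplicity. Combined with finite type, this yields properness over the base.

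\emph{Connectedness and smoothability.} Finally, for a single curve $X$ I would produce a smoothing: choose a nonsingular $\mathcal{X}/T$ with $T=\spec\C[\![t]\!]$ and $\mathcal{X}_0=X$ (nodal curves are smoothable), and extend $\phi$ to a polarisation of $\mathcal{X}/T$ by keeping its value on $\Gamma(X)$ equal to $\phi$ and assigning the value $(d)$ to the one-vertex graph of the smooth generic fibre; compatibility holds because $\sum_v\phi(v)=d$, and nondegeneracy on the one-vertex graph is automatic. Applying the previous steps to $\mathcal{X}/T$ produces a relative fine compactified Jacobian $\overline{\mathcal{J}}^d_\phi(\mathcal{X}/T)$, proper over $T$, whose central fibre is $\overline{\mathcal{J}}^d_\phi(X)$—this is exactly smoothability. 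As the total space is open in the nonsingular $\operatorname{Simp}^d(\mathcal{X}/T)$ and flat over $T$ with irreducible generic fibre (the Jacobian of the smooth curve $\mathcal{X}_\eta$), it is integral, and Stein factorisation over the normal base $T$ forces the central fibre $\overline{\mathcal{J}}^d_\phi(X)$ to be connected, completing the single-curve case. In the universal case connectedness of the closed fibres is not required by Definition~\ref{def:fcj}; nonemptiness instead follows because over $\Mm gn$ the stability condition is vacuous, so $\Jmb dgn(\phi)$ contains $\mathcal{J}^d_{g,n}$, and connectedness of the fibres then holds a posteriori as in Remark~\ref{numberconncomp}.
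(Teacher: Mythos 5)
Your outline collapses at its central step. For properness you write that ``the crux is to show that, because $\phi$ is nondegenerate, \emph{exactly one} of these limits is $\phi$-stable'' and then flag this chamber analysis as ``the main obstacle'' --- but you never carry it out, nor do you invoke any result that does. This existence-and-uniqueness of $\phi$-stable limits among the elementary modifications of a given extension is precisely the nontrivial content of the proposition; it is a theorem of Oda--Seshadri, and the paper's proof handles it by citing \cite[Theorem~12.14]{oda79}, which gives both properness and connectedness of $\overline{\mathcal{J}}^d_{\phi}(X)$ in one stroke. A proof attempt that isolates the key claim and then defers it is not a proof: everything downstream in your argument (properness of the family $\overline{\mathcal{J}}^d_\phi(\mathcal{X}/T)$ over $T$, hence your Stein-factorisation argument for connectedness, hence smoothability) rests on this unproven step.

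Your openness argument is also thinner than it looks. The quantities $\deg_{X_0}(F)$ and $\delta_{X_0}(F)$ in \eqref{Eqn: SymDefOfStability} do not vary in an obviously semicontinuous way in flat families (the restriction to a subcurve of a flat family of sheaves need not be flat, and in the universal case the subcurves $X_0$ themselves degenerate), so ``openness by semicontinuity'' needs a genuine argument. The paper instead translates $\phi$-stability into Esteves' $E$-stability for an explicitly constructed vector bundle $E$ (Formula~\eqref{conversion}) and cites \cite[Proposition~34]{esteves}; this translation also drives its smoothability proof, since $E$ extends to a bundle $\mathcal{E}$ on any smoothing $\mathcal{X}/T$ and Esteves' \cite[Theorem~A, Proposition~34]{esteves} then produce the relative fine compactified Jacobian with the correct central fibre --- no extension of $\phi$ to a ``polarisation of the family'' (a notion the paper's framework does not define for such families) is needed. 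Where your proposal does match the paper is in the peripheral points: nonemptiness of $\Jmb dgn(\phi)$ because stability is vacuous over $\Mm gn$, and connectedness of fibres being automatic in the universal case (Remark~\ref{numberconncomp}); but these were never the issue.
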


\begin{proof} The fact that $\overline{\mathcal{J}}^d_{\phi}(X) \subset \operatorname{Simp}^d(X)$ is connected and proper follows from \cite[Theorem~12.14]{oda79}. 
It remains to prove that $\phi$-stability is an open condition in $\Simp d{X/S}$. This follows from \cite[Proposition~34]{esteves}. However, applying that result requires us to translate our notion of $\phi$-stability to Esteves' stability with respect to a vector bundle $E$ on $X$, as defined in loc.cit. For the sake of completeness, we explain how this can be achieved.

Without loss of generality we may assume that $\phi(v)$ is a rational number for every $v\in\Vrt(\Gamma(X))$. Let $e \in \mathbb{N}$ be such that \[e \cdot \left(\phi(v)-\frac{d}{2g-2}(2g(v)-2)\right)=:a_v \in \mathbb{Z}\] for all $v \in \Vrt(\Gamma(X))$. 

Then one can check that our notion of stability with respect to $\phi$ corresponds to Esteves' $E$-stability for $E$ the  vector bundle of rank $e(2g-2)$ on $X$ defined by
\begin{equation} \label{conversion}
    E=\omega_X^{\otimes e(d+1-g)}\otimes \mathcal{O}\left(\sum_{v \in \Vrt(\Gamma(X))} a_v p_v \right)^{\otimes (2g-2)} \oplus \mathcal{O}_X^{\oplus ((2g-2)e)-1)},
\end{equation}
where $\omega_X$ is the dualising sheaf of $X$ and $p_v$ is the choice of one nonsingular point of the component of $X$ corresponding to $v$, for all $v \in \Vrt(\Gamma(X))$. We deduce that the moduli space  $\overline{\mathcal{J}}^d_{\phi}(X)$ equals Esteves' moduli space of $E$-stable sheaves ${J}_E^s(X)$.

Smoothability also follows from Esteves' construction. Indeed, let $\mathcal{X}/T$ be a smoothing of $X$ over $T= \spec \C [\![t]\!]$ as in Definition~\ref{smoothable}. Then each point $p_v$ on $X$ can be extended to a smooth section $\sigma_v \colon T \to \mathcal{X}$, and $\omega_X$ extends to the relative dualising sheaf $\omega_{\mathcal{X}/T}$ (and the trivial line bundle $\mathcal{O}_X$ can be extended to $\mathcal{O}_{\mathcal{X}}$). The line bundle $E$ on $X$ can therefore be extended to a line bundle $\mathcal{E}$ on $\mathcal{X}/T$ by obvious adaptation of Formula~\eqref{conversion}. By \cite[Theorem~A, Proposition~34]{esteves} Esteves' moduli space ${J}^s_{\mathcal{E}}(\mathcal{X}/T)$ of $\mathcal{E}$-stable sheaves on $\mathcal{X}/T$ is a degree $d$ \fcj for the family $\mathcal{X}/T$, and its central fibre equals ${J}_E^s(X)$.

Finally, properness and nonemptiness of $\Jmb dgn(\phi)$ follow from \cite[Corollary~4.4]{kp3}. Openness can be deduced again from Esteves' result as explained above for the case of a single curve, see \cite[Remark~4.6]{kp3}.
\end{proof}

We close this section by recalling the explicit wall and chamber description of the stability space $V^d(\Gamma(X))$ for $X$ a nodal curve of genus $1$ and of the stability space $V_{1,n}^d$ for all $d \in \mathbb{Z}$ and all $n\geq 1$. %
\begin{remark} \label{comparenodal}
The stability space for a nodal curve $X$ of arithmetic genus $1$ was studied in \cite[Section~7]{meravi}. Let us recall their results. 

Let us denote by $l$ (resp. by $r$) the number of irreducible components of $X$ contained (resp. not contained) in a rational tail of $X$. Each ordering of the components of $X$ induces an isomorphism $V^d(X) \cong \mathbb{R}^{r-1}\times \mathbb{R}^l$ (the stability assignment on the last of the $r$ components not contained in a rational tail equals $d$ minus the sum of the other assignments). 
Denoting by $(x_1, \ldots, x_{r-1},y_1, \ldots, y_{l})$ the coordinates in $\mathbb{R}^{r-1}\times\mathbb{R}^{l}$,  by \cite[Proposition~6.6, Proposition~7.4]{meravi}  the degenerate locus is the union of the hyperplanes $\{\sum_{i=r_1}^{r_2} x_i =c\colon c\in \Z\}$  over all $1 \leq r_1 \leq r_2 \leq r-1$, and of the hyperplanes
$\left\{y_k= t + \frac 12\colon t\in \Z\right\}$ over all  $1\leq k \leq l$.

From this description it can be deduced that, modulo translation by some line bundle on $X$, 
 the nondegenerate locus consists of $(r-1)!$ connected components inside the $(r-1)$-dimensional unit hypercube in $\mathbb{R}^{r-1}$, and hence that modulo translation there are $(r-1)!$ different degree $d$ fine compactified Jacobians of $X$.
Exactly one of these Jacobians contains the image of the Abel map $X \to \Simp d X$ associated with some (equivalently all) line bundles $M$ of degree $d+1$ on $X$, i.e. the map  $p \mapsto M \otimes I(p)$ for $I(p)$ the ideal sheaf of a moving point $p \in X$.
\end{remark}

In order to describe the stability space $V_{1,n}^d$, we will adopt the following conventions. 
if $\Gamma \in V^d_{1,n}$ is a graph with $2$ vertices, we  fix the coordinate on the affine subspace $V^d(\Gamma)\cong\R$ given by identifying any $\psi\in V^d(\Gamma)$ with the value $\psi(v)$ at the vertex $v$ which is \emph{not} incident to the $n$th half-edge.

For all $1 \leq i \leq n$, let $\Gamma_i \in G_{1,n}$ be the %
graph with two genus $0$ vertices joined by two edges, with the $i$-th marked point on one component and all other markings on the other component. For all $I \subseteq \{1,2,\dots,n\}$ with $|I|\geq 2$, let $\Gamma(I) \in G_{1,n}$ be the loopless 
graph with two vertices of genus $1$ and $0$ respectively, joined by one edge, with 
all the marked points in the set $I$ on the genus $0$ component 
and the marked points in the complement set $I^c$ on the genus $1$ component. 
\begin{proposition}\label{polytopes}
The projection 
\[\begin{array}{rcl}V^d_{1,n} &\longrightarrow& \prod_{i=1}^{n-1} V^d(\Gamma_i)\times \prod_{\begin{subarray}{c}I\subsetneq\{1,\dots,n\}\\|I|\geq 2\end{subarray}}V^d(\Gamma(I))\cong \mathbb{R}^{n-1} \times \mathbb{R}^{2^n-n-1}\\
\phi &\longmapsto &\left((\phi(\Gamma_i))_{1\leq i \leq n-1}, (\phi(\Gamma(I)))_{I \subseteq \{1,\dots,n\}, \ |I|\geq 2} \right)
\end{array}
\] %
is an isomorphism. %

Denoting by $x=(x_1, \ldots, x_{n-1})$ the coordinates in $\mathbb{R}^{n-1}$ and by $y=(y_1, \ldots, y_{2^n-n-1})$ the coordinates in $\mathbb{R}^{2^n-n-1}$,  the degenerate locus is the   union of the hyperplanes $\{\sum_{i\in J} x_i =c\colon c\in \Z\}$  over all nonempty $J\subseteq \{1,\dots,n-1\}$, and of the hyperplanes
$\left\{y_k= t + \frac 12\colon t\in \Z\right\}$ over all  $1\leq k \leq 2^n-n-1$.
\end{proposition}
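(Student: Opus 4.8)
The plan is to prove the two assertions in turn: first that the displayed projection is a linear isomorphism onto $\mathbb{R}^{n-1}\times\mathbb{R}^{2^n-n-1}$, and then that the degenerate locus is the stated hyperplane arrangement. An element of $V^d_{1,n}=\varinjlim_{\Gamma\in G_{1,n}} V^d(\Gamma)$ is a compatible assignment $(\phi(\Gamma))_\Gamma$, and the projection is manifestly affine-linear, so it suffices to prove bijectivity. The backbone of the argument is the classification of genus~$1$ stable graphs into two families: trees carrying a single genus~$1$ vertex with all other vertices of genus~$0$, and unicyclic graphs all of whose vertices have genus~$0$. First I would establish injectivity by reconstructing $\phi(\Gamma)(v)$ for every vertex $v$ of every $\Gamma$ from the generator values. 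Rational tails are handled as in the tree case: contracting every edge except the bridge separating a rational subtree $T$ (with marking set $I$) realises a contraction $\Gamma\to\Gamma(I)$, and by \eqref{phicompatible} the value $\phi(\Gamma(I))$ equals the total $\phi(\Gamma)$-degree on $T$; since every rational tail of a genus~$1$ curve carries at least $2$ markings, the relevant $\Gamma(I)$ is always among the generators, and telescoping these totals recovers each tail vertex value. For a cycle vertex $v$ I would contract all bridges to its tails to pass to a pure necklace, and then exhibit this necklace as a contraction of the maximal necklace (a cycle of $n$ genus~$0$ vertices, one marking each, in an order refining the arcs); the banana contractions $N\to\Gamma_i$ identify the value at the marking-$i$ vertex of $N$ with $x_i:=\phi(\Gamma_i)$, so the necklace value at a block with marking set $J$ is $\sum_{i\in J}x_i$, where $x_1,\dots,x_{n-1}$ are the chosen coordinates and $x_n=d-\sum_{i<n}x_i$ is fixed by the relation $\sum_i x_i=d$. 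Subtracting the tail totals then yields $\phi(\Gamma)(v)$ as a signed combination of the $x_i$ and the $\Gamma(I)$-values, proving injectivity.

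For surjectivity I would run this construction in reverse: given arbitrary reals $(x_i)_{i<n}$ and $(y_I)_{|I|\ge 2}$, define an assignment $\Phi(\Gamma)(v)$ by the cyclic-sum-plus-telescope formula above and verify that it is compatible with every morphism of stable graphs. Because every such morphism factors as a composition of single-edge contractions followed by an isomorphism, it is enough to check \eqref{phicompatible} across one contracted edge at a time, treating separately a cycle edge, a bridge onto a rational tail, and the loop contraction that fuses the cycle into a genus~$1$ vertex; in each case one checks directly that the partial sums defining $\Phi$ add correctly. I expect this verification to be the main obstacle: the delicate point is confirming that the cyclic contributions and the tail contributions remain mutually consistent under every admissible contraction, and that graph stability forces no relation among the generators beyond $\sum_i x_i=d$. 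This is where the full combinatorics of genus~$1$ dual graphs must be used, and it is the step most prone to case omissions.

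Finally, for the degenerate locus I would invoke Definition~\ref{polarised}, by which $\phi\in V^d_{1,n}$ is nondegenerate precisely when each component $\phi(\Gamma)$ is nondegenerate in $V^d(\Gamma)$; hence the degenerate locus is the union over all $\Gamma$ of the preimages of the single-curve degenerate loci described in Remark~\ref{comparenodal}. Under the reconstruction, a consecutive-arc partial sum of the local necklace coordinates of $\Gamma$ becomes $\sum_{i\in J}x_i$ with $J$ the marking set of the arc, while a rational-tail coordinate becomes the corresponding $\Gamma(I)$-coordinate $y_I$. As $\Gamma$ ranges over all unicyclic graphs and all cyclic orderings, the arc $J$ ranges over all subsets of $\{1,\dots,n\}$; eliminating $x_n$ via the relation $\sum_i x_i=d$ turns an integrality condition on a set containing $n$ into one on its complement inside $\{1,\dots,n-1\}$, so one obtains exactly the hyperplanes $\{\sum_{i\in J}x_i=c:c\in\Z\}$ for nonempty $J\subseteq\{1,\dots,n-1\}$. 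Likewise, as $I$ ranges over all admissible marking sets one obtains exactly the hyperplanes $\{y_k=t+\tfrac12:t\in\Z\}$, the half-integrality being inherited from Remark~\ref{comparenodal} up to an integer shift absorbed into $t$. Collecting the two families gives the asserted arrangement.
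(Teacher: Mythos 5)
Your proposal takes a genuinely different route from the paper: the paper does not prove this proposition at all, but simply cites it as the $g=1$ case of \cite[Corollary~3.6, Theorem~2]{kp3}, whereas you attempt a self-contained argument. Your architecture --- classify genus-$1$ stable graphs into trees with one genus-$1$ vertex and unicyclic graphs with all vertices of genus $0$, reconstruct $\phi(\Gamma)$ at every vertex from the banana values $x_i$ and the one-edge-graph values $y_I$ (arc sums of the $x_i$ minus telescoped branch totals), and then check compatibility only across single-edge contractions --- is sound, and is in effect a genus-$1$ specialisation of the argument of \cite{kp3}. The injectivity half is complete and correct, including the key observation that every branch of a rational tail carries at least two markings (its leaf vertices must, by stability), so the graphs $\Gamma(I)$ needed for the telescoping are always among the chosen generators.

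Two points fall short of a proof. First, surjectivity: you define the candidate assignment $\Phi$ and correctly reduce the problem to verifying \eqref{phicompatible} across one contracted edge at a time (cycle edge, tail edge, loop), but you explicitly leave this verification undone, calling it ``the main obstacle''. That verification is the actual mathematical content of the isomorphism claim --- it is where one sees that no relations among the generators are forced --- and until it is written out the proof is incomplete. (It does go through: contracting a cycle edge adds the arc sums and unions the tail collections of the two merged vertices; contracting a tail edge telescopes the branch totals; contracting a loop changes nothing at the level of vertices.)

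Second, and more subtly, your degenerate-locus step silently resolves a coordinate ambiguity in Remark~\ref{comparenodal}. The genuine walls of $V^d(\Gamma)$ for a curve with rational tails are indexed by connected subcurves with connected complement: branches give walls where the \emph{branch total} lies in $\Z+\frac12$, and arcs of the cycle \emph{together with all their attached tails} give walls where the corresponding sum is an integer. If one instead reads the coordinates $y_k$ of Remark~\ref{comparenodal} literally as the polarisation values at the individual tail components, your translation would produce spurious universal walls of the form $y_{I\cup I'}-y_{I'}\in\Z+\frac12$ for nested branches, which do not appear in the statement of Proposition~\ref{polytopes}; conversely, walls at individual interior tail components are not genuine, because the branch inequalities already pin those degrees down (the subcurve given by an interior tail component alone has disconnected complement). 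Your sentence ``a rational-tail coordinate becomes the corresponding $\Gamma(I)$-coordinate $y_I$'' is correct precisely under the branch-total reading of those coordinates, but this needs to be stated and justified: it is exactly what makes the pulled-back wall arrangements close up into the stated union of hyperplanes $\{\sum_{i\in J}x_i=c\}$ and $\{y_k=t+\frac12\}$ with no cross terms between the $y$-coordinates.
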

\begin{proof}
This is the $g=1$ case of \cite[Corollary~3.6, Theorem~2]{kp3}.

\end{proof}

The following property, which is specific to the genus $g=1$ case, will be used in the proof of Lemma~\ref{existenceofj}.

\begin{corollary} \label{surjectivegenus1}
The natural projection map 
$V_{1,n}^d \to V^d(\Gamma)$
is surjective for every stable graph $\Gamma\in G_{1,n}$.
\end{corollary}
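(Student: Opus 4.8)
The plan is to show that the affine projection $\pi_\Gamma\colon V^d_{1,n}\to V^d(\Gamma)$ is surjective by first reducing to the case of a maximally degenerate graph and then exploiting the explicit coordinates provided by Proposition~\ref{polytopes}. Throughout I write $V^0(\hat\Gamma):=\{\phi\colon\sum_{v}\phi(v)=0\}$ for the linear space under the torsor $V^d(\hat\Gamma)$.

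First I would reduce to the case where $\Gamma=\hat\Gamma$ is \emph{maximally degenerate}, i.e.\ every vertex is rational and trivalent, so that $\hat\Gamma$ consists of a single cycle with rational trees attached and $|\Vrt(\hat\Gamma)|=n$. Indeed, every stable graph $\Gamma\in G_{1,n}$ is the target of a contraction $f\colon\hat\Gamma\to\Gamma$ from such a maximal graph (maximally degenerate the curve of type $\Gamma$), and by \eqref{phicompatible} the induced map $f_*\colon V^d(\hat\Gamma)\to V^d(\Gamma)$ is surjective, since it only sums the entries over the nonempty fibres of $f$ and any prescribed vertex-sums can be realised. As the two projections are compatible, $\pi_\Gamma=f_*\circ\pi_{\hat\Gamma}$, so it suffices to prove that $\pi_{\hat\Gamma}$ is surjective for maximal $\hat\Gamma$.

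By Proposition~\ref{polytopes} the projection $V^d_{1,n}\to \prod_{i}V^d(\Gamma_i)\times\prod_{I}V^d(\Gamma(I))$ is an isomorphism, so $\pi_{\hat\Gamma}$ becomes a linear map $L$ out of this product and I must check that its image is all of the $(n-1)$-dimensional space $V^d(\hat\Gamma)$. Since $V^d(\hat\Gamma)$ is a torsor under $V^0(\hat\Gamma)$ and the image of $\pi_{\hat\Gamma}$ is nonempty (e.g.\ it contains the canonical polarisation), it is enough to realise a spanning set of $V^0(\hat\Gamma)$ in the image of the linear part of $L$. Because $\hat\Gamma$ is connected, the vectors $\delta_u-\delta_w$ (differences of the standard basis vectors indexed by vertices) along the edges span $V^0(\hat\Gamma)$, and I would treat separating and non-separating edges separately. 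For a separating edge $e$ (a bridge), contracting all edges of $\hat\Gamma$ except $e$ produces one of the graphs $\Gamma(I_e)$, where $I_e$ is the marking set of the rational tail cut off by $e$; the resulting compatibility relation identifies $\phi(\Gamma(I_e))$ with the $\phi$-degree carried by that tail, so letting the free coordinate $y_{I_e}$ vary realises the transfer direction across $e$. Ranging over nested bridges, these directions account for the entire ``tree'' part of $V^0(\hat\Gamma)$.

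The main obstacle is the \emph{cyclic} part: the transfer directions $\delta_{w}-\delta_{w'}$ between two vertices $w,w'$ on the unique cycle. When a cycle vertex carries a rational tail with more than one marking, there is \emph{no} contraction $\hat\Gamma\to\Gamma_i$, so these directions are invisible from a single generator; instead one must follow the chain of contractions $\hat\Gamma\to N\to\cdots$ through the two-vertex ``necklace'' graphs $N$ and use that, via the reconstruction underlying Proposition~\ref{polytopes} (see \cite{kp3}), the degree distribution along the cycle of such an $N$ is governed precisely by the $n-1$ coordinates $x_i=\phi(\Gamma_i)$. Here the genus~$1$ hypothesis is essential: because $b_1(\hat\Gamma)=1$ the cycle is unique, so these $n-1$ parameters suffice to produce every cyclic transfer, whereas for $b_1>1$ the analogous surjectivity can fail. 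Combining the tree directions coming from the $y_I$ with the cyclic directions coming from the $x_i$ yields a spanning set of $V^0(\hat\Gamma)$, which proves that $L$, and hence $\pi_\Gamma$, is surjective.
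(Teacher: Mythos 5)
Your argument is correct and, at bottom, is the same coordinate verification as the paper's: everything reduces to the explicit coordinates $(x_i,y_I)$ of Proposition~\ref{polytopes} and the reconstruction from \cite{kp3} that underlies it. The paper's (very terse) proof works with an arbitrary $\Gamma$ and expresses the projection in the coordinates of Remark~\ref{comparenodal} on the target --- the values at the $l$ tail vertices and at $r-1$ of the cycle vertices --- the point being that the $y_{I}$, for $I$ ranging over the marking sets of the sub-tails of $\Gamma$, surject triangularly onto the tail coordinates, while the $x_i$ surject onto the cycle coordinates through the partition of $\{1,\dots,n\}$ into branches. You instead first reduce to trivalent $\hat\Gamma$ via surjectivity of $f_*$ for a contraction $f\colon\hat\Gamma\to\Gamma$ (a correct and tidy step, though not needed if one uses the target coordinates of Remark~\ref{comparenodal} directly, which are available for every $\Gamma$), and then span $V^0(\hat\Gamma)$ by edge-transfer vectors, bridges being handled by the $y_{I_e}$ and cycle transfers by the $x_i$; your reduction isolates the combinatorics in a single graph type and makes the genus-$1$ mechanism (a unique cycle, so the $n-1$ parameters $x_i$ suffice) transparent, whereas the paper's route avoids the reduction entirely. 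Two places where you should add detail: first, the claim that varying $y_{I_e}$ produces \emph{exactly} the transfer across $e$ needs the observation that the subtree-sum functionals of the bridges together with the branch-sum functionals form a basis of $\left(\R^{\Vrt(\hat\Gamma)}\right)^*$, so the derivative is pinned by its pairings (for spanning purposes the weaker facts that this derivative pairs nontrivially with the subtree functional of $e$, trivially with those of all other bridges, and that the $x_i$-derivatives pair trivially with all subtree functionals, already give the direct-sum decomposition you need); second, your appeal to ``the reconstruction underlying Proposition~\ref{polytopes}'' for how the $x_i$ govern the degree split of every two-vertex cycle graph is legitimate, but it is precisely where the content of \cite{kp3} enters --- the paper's one-line proof leans on the same fact just as silently. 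Finally, drop the parenthetical about the canonical polarisation: in genus $1$ it does not exist (since $2g-2=0$); nonemptiness of the image is anyway automatic because $V^d_{1,n}\neq\emptyset$ by Proposition~\ref{polytopes}.
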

\begin{proof}
The claim can be easily checked by expressing the map in terms of the coordinates of Remark~\ref{comparenodal} for $V^d(\Gamma)\cong \mathbb R^{r-1+l}$ and the coordinates of Proposition~\ref{polytopes} for $V_{1,n}^d \cong \mathbb{R}^{n-1} \times \mathbb{R}^{2^n-n-1}$. 
\end{proof}

\subsection{Some general results on fine compactified Jacobians of families} Here we collect some general results on fine compactified Jacobians that will be needed later. 

Our first observation is that in the case where the fibres are irreducible curves, the moduli space of simple sheaves is already proper, so the only fine compactified Jacobian is the entire moduli space of simple sheaves.
\begin{lemma} \label{irreducible}
  If $X/S$ is a family of \emph{irreducible} nodal curves, then $\operatorname{Simp}^d(X/S)$ is the only fine compactified Jacobian for the family $X/S$. 
\end{lemma}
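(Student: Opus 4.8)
The plan is to show that when the fibres are irreducible the ambient space $\operatorname{Simp}^d(X/S)$ is already proper over $S$, and then to deduce that any fine compactified Jacobian must exhaust it. First I would record that on an irreducible curve every rank~$1$ torsion-free sheaf is automatically simple: removing finitely many points from an irreducible curve cannot disconnect it, so the simplicity criterion recalled in the Notation section is vacuously satisfied. Hence $\operatorname{Simp}^d(X/S)$ coincides with the relative moduli space of \emph{all} rank~$1$ torsion-free sheaves of degree~$d$ on the fibres.

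Next I would prove that $\operatorname{Simp}^d(X/S)\to S$ is proper. Fibrewise this is transparent from the machinery already in hand: if $X_s$ is irreducible then its dual graph has a single vertex, so $V^d(\Gamma(X_s))$ is a single point $\phi$, and since $X_s$ has no proper nonempty subcurves the stability inequality \eqref{Eqn: SymDefOfStability} is vacuous. Thus $\phi$ is (vacuously) nondegenerate and every simple sheaf is $\phi$-stable, so $\overline{\mathcal{J}}^d_\phi(X_s)=\operatorname{Simp}^d(X_s)$; by Proposition~\ref{polarisedarefine} this fibre is proper and connected. To globalise I would invoke the classical fact that the relative compactified Jacobian of a family of integral (equivalently, irreducible and reduced) curves is projective over the base, due to Altman--Kleiman \cite{altman80} and Esteves \cite{esteves}; equivalently, one checks directly that $\operatorname{Simp}^d(X/S)$ is separated and of finite type over $S$. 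The existence part of the valuative criterion of properness is already among the listed properties of $\operatorname{Simp}^d(X/S)$, so the points that genuinely need verification are separatedness (uniqueness of the torsion-free limit) and finite type. This is precisely the step where irreducibility is essential, since it is the failure of separatedness and of finite type for \emph{reducible} fibres that is responsible for the non-properness of $\operatorname{Simp}^d$ in general; I therefore expect it to be the main obstacle, to be settled either by the cited projectivity of compactified Jacobians of integral curves or by a direct valuative argument for the uniqueness of the limit.

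Finally, with $\operatorname{Simp}^d(X/S)\to S$ proper, and in particular separated, established, I would let $\overline{\mathcal{J}}^d(X/S)\subseteq\operatorname{Simp}^d(X/S)$ be any fine compactified Jacobian. By Definition~\ref{def:fcj} the inclusion is an open immersion; since $\overline{\mathcal{J}}^d(X/S)$ is proper over $S$ while $\operatorname{Simp}^d(X/S)$ is separated over $S$, the inclusion is itself proper, and a proper open immersion is the inclusion of an open and closed subspace. Working fibre by fibre over a closed point $s$, the fibre $\operatorname{Simp}^d(X_s)$ is connected (by the listed properties, or by the fibrewise statement above), so the open and closed subset $\overline{\mathcal{J}}^d(X/S)_s$ is either empty or all of $\operatorname{Simp}^d(X_s)$. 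Since the fibres of a fine compactified Jacobian are nonempty connected compactifications of the corresponding Jacobians (as in the surjectivity argument of Remark~\ref{numberconncomp}), we conclude $\overline{\mathcal{J}}^d(X/S)_s=\operatorname{Simp}^d(X_s)$ for every $s$, and therefore $\overline{\mathcal{J}}^d(X/S)=\operatorname{Simp}^d(X/S)$. The same properness and connectedness also show that $\operatorname{Simp}^d(X/S)$ is itself a fine compactified Jacobian, so it is indeed the unique one.
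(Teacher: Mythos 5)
Your proposal is correct and follows essentially the same route as the paper: the paper's proof likewise cites Esteves (Proposition~34 and Section~1.2 of \cite{esteves}) for the properness and connectedness of $\operatorname{Simp}^d(X/S)$ when the fibres are irreducible, and then concludes exactly as you do from the fact that a fine compactified Jacobian is by definition a nonempty open subspace, proper over $S$, with connected fibres. Your fibrewise check via the vacuous polarisation and your explicit proper-open-immersion argument are just elaborations of the steps the paper leaves implicit.
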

\begin{proof}
 From \cite[Proposition~34]{esteves} and  \cite[Section~1.2]{esteves} we deduce that the scheme $\operatorname{Simp}^d(X/S)$ is proper and connected when $X$ is irreducible. The result follows then from the fact that fine compactified Jacobians are by definition nonempty, open and proper subspaces of $\operatorname{Simp}^d(X/S)$.
\end{proof}

In the genus~$1$ case, Lemma~\ref{irreducible} has the following consequence.
 \begin{lemma} \label{irred1}
   Let $X/S$ be a family of \emph{irreducible} nodal curves of arithmetic genus~$1$ that admits a smooth section $q$, and $\overline{\mathcal{J}}^d(X/S)$ a \fcj. Then the Abel map mapping a point $p$ over $s$ to the sheaf $I(p) \otimes \mathcal{O}_{X_s}((d+1)q(s))$ induces an isomorphism $X \to \overline{\mathcal{J}}^d(X/S)$.
 \end{lemma}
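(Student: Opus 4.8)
The plan is to reduce first to the moduli space of all simple sheaves and then to a single fibre. By Lemma~\ref{irreducible} the irreducibility hypothesis forces $\Jbs d{X/S} = \Simp d{X/S}$, so it suffices to show that the Abel map induces an isomorphism onto $\Simp d{X/S}$. I would begin by making the Abel map precise as an $S$-morphism: on $X\times_S X$ consider the ideal sheaf $I_\Delta$ of the relative diagonal, twisted by the pullback of $\mathcal{O}_X((d+1)q)$ along the first projection, and restrict to the fibres of the second projection. This is a family of rank~$1$ torsion-free sheaves of degree $d$, flat over the second factor, whose fibres are simple because $X_s$ is irreducible (removing a single point never disconnects an irreducible curve). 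By the universal property of $\Simp d{X/S}$ it defines an $S$-morphism $\alpha\colon X \to \Simp d{X/S}$ acting on points by $p \mapsto I(p)\otimes\mathcal{O}_{X_s}((d+1)q(s))$.

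Next I would reduce the isomorphism statement to the geometric fibres. Both $X$ and $Y:=\Simp d{X/S}$ are proper and flat over $S$ of relative dimension $1$ (properness and connectedness of $Y$ from Lemma~\ref{irreducible}, flatness and relative dimension from property~\eqref{first}), and $\alpha$ is a morphism over $S$. By the standard fibrewise criterion for isomorphisms between flat, proper, finitely presented $S$-schemes, $\alpha$ is an isomorphism as soon as each $\alpha_s$ is. This lets me assume $S=\spec k$ with $k$ algebraically closed, $X$ a single irreducible nodal curve of arithmetic genus~$1$, with normalisation $\nu\colon \Pp1 \to X$ and a single node $p_0$.

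For the fibre I would first show $\alpha$ is bijective and then that it is an isomorphism. On the smooth locus $X^{sm}$ the map is the classical Abel map $p\mapsto \mathcal{O}_X(-p+(d+1)q)$ into $\pic^d(X)$, which identifies $X^{sm}\cong\mathbb{G}_m$ with the generalised Jacobian $\pic^0(X)\cong\mathbb{G}_m$ after the degree shift; this is the genus~$1$ case of the classical Abel theorem. The node $p_0$ is sent to a non-locally-free sheaf, and every non-locally-free rank~$1$ torsion-free sheaf of degree $d$ on $X$ has the form $\nu_* M$ with $M$ a line bundle of a fixed degree on $\Pp1$, hence is unique up to isomorphism; so $\alpha$ sends $p_0$ to the unique boundary point of $Y$ and is therefore bijective. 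To upgrade bijectivity to an isomorphism I would compare the two curves at the node: $\alpha$ is a finite birational morphism of integral proper curves sharing the normalisation $\Pp1$, so it is an isomorphism precisely when $\alpha_*\mathcal{O}_X=\mathcal{O}_Y$, equivalently when $p_a(Y)=p_a(X)=1$. This follows from a deformation-theoretic computation at the non-locally-free sheaf $F=\nu_* M$, showing that the Zariski tangent space $\operatorname{Ext}^1_X(F,F)$ is $2$-dimensional and that the local model of $Y$ there is an ordinary node (which simultaneously gives reducedness, hence integrality, of $Y$).

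The main obstacle is exactly this last step: controlling $Y$ at the non-locally-free point. Everything over the smooth locus is the classical Abel--Jacobi isomorphism, and bijectivity is elementary, but proving that $\alpha$ is an isomorphism rather than merely a bijective birational morphism requires the local deformation theory of a rank~$1$ torsion-free sheaf at a node --- either through the computation of $\operatorname{Ext}^1_X(F,F)$ together with the obstruction yielding a nodal local model, or equivalently through the identification $p_a(Y)=1$. I expect this to be the only non-formal input; with it in hand, the fibrewise criterion and the reduction above immediately give the isomorphism $X \xrightarrow{\sim} \overline{\mathcal{J}}^d(X/S)$ in families.
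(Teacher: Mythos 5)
Your proposal is correct, and after the shared first step it takes a genuinely different route from the paper. Like the paper, you begin by using Lemma~\ref{irreducible} to identify $\overline{\mathcal{J}}^d(X/S)$ with $\Simp d{X/S}$; but at that point the paper simply cites \cite[Ex.~(8.9, iii)]{altman80}, which is precisely the statement that the Abel map of a family of integral genus-one curves is an isomorphism onto the moduli space of rank~$1$ torsion-free sheaves, whereas you in effect reprove that exercise: you construct the Abel map in families via the ideal of the relative diagonal (which is indeed flat over the second factor with fibres $I(p)$), reduce to geometric fibres by the fibrewise isomorphism criterion (valid here since $S$ is of finite type over $\C$, hence Jacobson, and both sides are flat and proper over $S$), and then analyse a single fibre --- classical Abel--Jacobi on the line-bundle locus, uniqueness of the non-locally-free sheaf $\nu_*\mathcal{O}_{\Pp1}(d-1)$, and the nodal local model of $\Simp dX$ at that point to conclude $p_a(Y)=1$. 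The one nonformal input you isolate, the computation of $\operatorname{Ext}^1_X(F,F)$ and the nodal local structure, is exactly the content of \cite[Theorem~5.10]{cmkv1}, which the paper itself invokes later in Proposition~\ref{simple:I_n}; citing it there would spare you the deformation computation with no circularity, since that result does not depend on the present lemma. In short, the paper's proof is a reduction plus a citation, while yours is self-contained and supplies a proof of the cited result; two small points to tidy in your write-up are the case of smooth fibres (handled by the classical Abel--Jacobi theorem, as the paper remarks just before its proof) and the degree bookkeeping $\deg\bigl(I(p)\otimes\mathcal{O}_{X_s}((d+1)q(s))\bigr)=(d+1)-1=d$, confirming that the map lands in the degree-$d$ space.
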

   The case when $X/S$ is smooth and with a section is a well-known classical fact, see e.g. \cite[Proposition 5.3.2]{cossec-dolgachev}.
  \begin{proof}
This follows from \cite[Ex.~(8.9, iii)]{altman80} in light  of the equality $\Simp d{X/S}=\overline{\mathcal{J}}^d(X/S)$ proved in Lemma~\ref{irreducible}.
  \end{proof}

We are now going to prove a series of general results  for fine compactified Jacobians of curves of any genus. The first is that forming fine compactified Jacobians commutes with products.
\begin{lemma} \label{product}
  Let $X/S$ be a family of nodal curves, and let $T$ be a scheme. The operation of restricting sheaves to a fibre of the product identifies each degree~$d$ fine compactified relative Jacobian $\overline{\mathcal{J}}^d((X\times T)/(S\times T))$ with $\overline{\mathcal{J}}^d(X/S) \times T $ for some degree~$d$ fine compactified Jacobian $\overline{\mathcal{J}}^d(X/S)$ of $X/S$. %
\end{lemma}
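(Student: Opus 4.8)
The plan is to exhibit a natural bijection between degree $d$ fine compactified Jacobians of the product family $(X\times T)/(S\times T)$ and degree $d$ fine compactified Jacobians of $X/S$, realised concretely by the restriction-to-a-fibre operation, and then to check that this bijection sends $\overline{\mathcal{J}}^d(X/S)$ to $\overline{\mathcal{J}}^d(X/S)\times T$. The first thing I would establish is that the simple-sheaf moduli spaces themselves satisfy $\operatorname{Simp}^d((X\times T)/(S\times T))\cong\operatorname{Simp}^d(X/S)\times T$ as $(S\times T)$-algebraic spaces; this is essentially the statement that forming $\operatorname{Simp}^d$ commutes with the base change $S\times T\to S$ along the projection, which follows from the representability and base-change properties of Altman--Kleiman and Esteves quoted at the start of the section. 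Once this isomorphism is in place, the whole problem is to understand which open proper-over-$(S\times T)$ substacks with connected fibres it matches up.

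The key point is then to argue that every degree $d$ fine compactified Jacobian $\overline{\mathcal{J}}^d((X\times T)/(S\times T))$ inside $\operatorname{Simp}^d(X/S)\times T$ is \emph{constant in the $T$-direction}, i.e. of the form $U\times T$ for an open subspace $U\subseteq\operatorname{Simp}^d(X/S)$. To see this I would fix a closed point $t\in T$ and restrict to the slice $\operatorname{Simp}^d(X/S)\times\{t\}$; the restriction is then an open, proper-over-$S$, fibrewise connected subspace, hence a fine compactified Jacobian $U_t\subseteq\operatorname{Simp}^d(X/S)$. The heart of the argument is that $U_t$ cannot depend on $t$. Here I would invoke that the \emph{set} of fine compactified Jacobians of $X/S$ is discrete --- they are distinguished by combinatorial/numerical data (e.g.\ the multidegrees realised on each fibre, as encoded in the stability-type conditions), so the assignment $t\mapsto U_t$ is a locally constant function on the connected components of $T$ and is constant on each; replacing $T$ by a connected component, $U_t=U$ is independent of $t$. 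A complementary and perhaps cleaner route is to note that openness plus properness over $S\times T$ forces the locus to be saturated under the second projection, since a point $(F,t)$ lies in the locus iff the whole slice containing it does, again by discreteness of the set of fibrewise stability chambers.

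From constancy it follows that $\overline{\mathcal{J}}^d((X\times T)/(S\times T))=U\times T$ with $U$ open in $\operatorname{Simp}^d(X/S)$; properness of $U\times T$ over $S\times T$ gives properness of $U$ over $S$ (properness descends along the faithfully flat projection, or simply because $U\times\{pt\}\to S$ is the fibre of a proper map), and connectedness of the fibres of $U\times T$ over closed points of $S\times T$ gives connectedness of the fibres of $U$ over closed points of $S$. Hence $U=\overline{\mathcal{J}}^d(X/S)$ is a degree $d$ fine compactified Jacobian of $X/S$, and conversely any such $U$ produces $U\times T$, giving the asserted identification. The main obstacle I anticipate is the discreteness/constancy step: one must justify rigorously that there is no continuous family of distinct fine compactified Jacobians as $t$ varies, which requires knowing that the open-proper-connected condition cuts out only finitely many (and rigid) substacks of $\operatorname{Simp}^d(X/S)$ over each fibre --- this is where I would lean most heavily on the structural results about $\operatorname{Simp}^d$ and, if needed, on the finiteness of stability chambers implicit in Remark~\ref{comparenodal} and Proposition~\ref{polytopes}.
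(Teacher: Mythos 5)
Your first step---identifying $\operatorname{Simp}^d((X\times T)/(S\times T))$ with $\operatorname{Simp}^d(X/S)\times T$ by base change---is exactly the paper's first step (the paper cites \cite[Section~4]{esteves} for it), and you are also right that the whole content of the lemma is then the constancy of the fibre $U_t$ in the $T$-direction. The gap is in how you justify that constancy. Your main route invokes ``discreteness'' of the set of fine compactified Jacobians of $X/S$, distinguished by combinatorial data or stability chambers. No such result is available here: the lemma is stated for families of \emph{arbitrary} genus, whereas every classification result in the paper (Lemma~\ref{description-I_n}, Corollary~\ref{cor:fcjs}, Theorem~\ref{f-and-g}) is specific to genus~$1$ and comes later; the finiteness of stability chambers in Remark~\ref{comparenodal} and Proposition~\ref{polytopes}, on which you say you would ``lean most heavily'', concerns polarisations only, and the paper later shows (Remark~\ref{strangeexamples}, Example~\ref{exotic}) that polarisations do not exhaust fine compactified Jacobians, so chambers cannot be used as a proxy for them. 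Finally, the set of fine compactified Jacobians is infinite (translation by line bundles already produces infinitely many), so ``discrete'' would in any case need a precise meaning together with a proof that $t\mapsto U_t$ is locally constant---which is precisely the point at issue. As written, the main route is circular.

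The fix is that your ``complementary and cleaner route'' is correct and needs no discreteness whatsoever; it only has to be run directly. For a closed point $F$ of $\operatorname{Simp}^d(X/S)$ lying over $s\in S$, set $T_F=\{t\in T:\ (F,t)\in\overline{\mathcal{J}}^d((X\times T)/(S\times T))\}$. This set is open in $T$ because the fine compactified Jacobian is open; it is closed because $\overline{\mathcal{J}}^d((X\times T)/(S\times T))\cap(\{F\}\times T)$ is a closed subspace of the fine compactified Jacobian, hence proper over $S\times T$, so that its image $\{s\}\times T_F$ is closed in $S\times T$. If $T$ is connected, each $T_F$ is therefore empty or all of $T$, i.e.\ the fine compactified Jacobian is saturated for the second projection, and the rest of your argument (properness and connectedness of fibres of $U$ obtained by slicing at one point of $T$) goes through verbatim. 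Note that connectedness of $T$ is genuinely needed---over a disconnected $T$ one can glue different fine compactified Jacobians over different components, so the statement must be read componentwise; this is harmless in the only application, Lemma~\ref{stratawise}, where $T$ is a product of strata. For comparison, the paper's own proof compresses the $T$-constancy into the one-line observation that the classifying map $S\times T\to\Mmb gn$ factors through the projection to $S$; your diagnosis of where the real work lies is accurate, but the justification must be openness-plus-properness, not an unproved classification statement.
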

\begin{proof}
By \cite[Section~4]{esteves}
we may identify $\operatorname{Simp}^d((X\times T)/(S\times T))=\operatorname{Simp}^d(X/S)\times T$. The claim follows because the map $S\times T \to \Mmb gn$ factors through the projection to $S$, namely $S\times T \to S \to \Mmb gn$.
\end{proof}

In the following result we prove that the fine compactified Jacobian of a curve with a separating node is a product of fine compactified Jacobians of the two subcurves. %

\begin{lemma}\label{separatingnode}
Let $\xymatrix{{\mathcal X_1} \ar[r]& {B_1} \ar@/_1pc/[l]_{\sigma_1}}$ and $\xymatrix{{\mathcal X_2} \ar[r]& {B_2} \ar@/_1pc/[l]_{\sigma_2}}$ be 
two families of nodal curves with distinguished sections $\sigma_i$ factorising through the smooth locus of $\mathcal X_i$ for $i=1,2$. 

Let us denote by $\mathcal X\rightarrow B:=B_1\times B_2$ the family of curves obtained by gluing the fibres of $\mathcal X_1$ and $\mathcal X_2$ transversely along the smooth sections $\sigma_1$ and $\sigma_2$. Then for any degree $d$ \fcj $\overline{\mathcal J}^d(\mathcal X/B)$ of $\mathcal X/B$, there exists a unique partition $d_1+d_2=d$ and a unique pair of fine compactified Jacobians $\overline{\mathcal{J}}^{d_i}(\mathcal X_i/B_i)$ for $i=1,2$ such that restriction to the subcurves induces an isomorphism
 \[ \overline{\mathcal{J}}^d(\mathcal X/B)\cong \overline{\mathcal{J}}^{d_1}(\mathcal X_1/B_1) \times \overline{\mathcal{J}}^{d_2}(\mathcal X_2/B_2).\]
\end{lemma}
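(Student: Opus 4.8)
The plan is to reduce the statement about families to the fibrewise statement, and to prove the fibrewise statement by a direct analysis of rank~$1$ torsion-free simple sheaves on a curve with a separating node. The key structural fact is that a curve $X$ obtained by gluing $X_1$ and $X_2$ transversely at a single separating node has the property that a rank~$1$ torsion-free sheaf $F$ on $X$ which is \emph{locally free at the node} splits canonically as a pushforward of its restrictions $F_1$ on $X_1$ and $F_2$ on $X_2$, glued along the one-dimensional fibres at the node. Conversely, if $F$ is singular at the separating node, then removing that point disconnects $X$, so $F$ would fail to be simple. Hence every simple sheaf on $X$ is locally free at the separating node, and restriction to the two subcurves gives a decomposition $F \mapsto (F_1, F_2)$ with $\deg_{X_1}(F_1) + \deg_{X_2}(F_2) = d$ (the node is not singular, so there is no $\delta$-correction). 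This identifies $\operatorname{Simp}^d(X)$ with $\coprod_{d_1+d_2=d} \operatorname{Simp}^{d_1}(X_1)\times \operatorname{Simp}^{d_2}(X_2)$, and the analogous statement holds in families by the compatibility of $\operatorname{Simp}$ with the gluing construction.

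\textbf{Carrying this out.} First I would establish the identification at the level of moduli of simple sheaves, in families over $B = B_1 \times B_2$: using Esteves' construction (as invoked in Lemma~\ref{irreducible} and Lemma~\ref{product}) one gets a decomposition of $\operatorname{Simp}^d(\mathcal X/B)$ into a disjoint union over the partitions $d_1+d_2=d$ of the products $\operatorname{Simp}^{d_1}(\mathcal X_1/B_1)\times \operatorname{Simp}^{d_2}(\mathcal X_2/B_2)$. Next, given a \fcj{} $\overline{\mathcal J}^d(\mathcal X/B)$, which is an open and proper subspace with connected fibres, I would intersect it with each product piece. Openness is automatic since each piece is open in the disjoint union. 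The crucial point is that connectedness of the fibres of $\overline{\mathcal J}^d(\mathcal X/B)$ forces it to be contained in a \emph{single} partition summand: a fibre over $(s_1,s_2)$ lives inside $\coprod_{d_1+d_2=d}\overline{\mathcal{J}}^{d_1}(\mathcal X_{1,s_1})\times\overline{\mathcal{J}}^{d_2}(\mathcal X_{2,s_2})$, and a connected subspace of a disjoint union must sit in one component. This yields the uniqueness of the partition $d_1+d_2=d$.

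\textbf{The two factors.} Having fixed the partition, write $\overline{\mathcal J}^d(\mathcal X/B) \subseteq \operatorname{Simp}^{d_1}(\mathcal X_1/B_1)\times \operatorname{Simp}^{d_2}(\mathcal X_2/B_2)$ as an open, proper, fibrewise-connected subspace. I would define $\overline{\mathcal{J}}^{d_i}(\mathcal X_i/B_i)$ as the image under the $i$-th projection and argue that the inclusion is actually an equality of products. For this, one fixes a point in one factor (using the distinguished sections to produce line bundles giving base points) and examines the fibre: properness descends to each factor because a closed subspace of a proper space over each base is proper, and the product structure is recovered because the decomposition of $\operatorname{Simp}$ is a genuine product, with no gluing datum surviving at a separating (hence locally free) node. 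The factors $\overline{\mathcal{J}}^{d_i}(\mathcal X_i/B_i)$ are then open, proper and fibrewise connected, \ie{} \fcjs{} of $\mathcal X_i/B_i$, and their uniqueness follows from the fact that each is determined as a projection.

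\textbf{Main obstacle.} The step I expect to be most delicate is showing that the subspace $\overline{\mathcal J}^d(\mathcal X/B)$ is a genuine \emph{product} $\overline{\mathcal{J}}^{d_1}(\mathcal X_1/B_1)\times \overline{\mathcal{J}}^{d_2}(\mathcal X_2/B_2)$, rather than merely a proper fibrewise-connected subspace of such a product. The content here is that properness of a subspace of $Y_1 \times Y_2$ over $B_1\times B_2$, combined with fibrewise connectedness and the fact that each $\operatorname{Simp}^{d_i}(\mathcal X_i/B_i)$ only satisfies the existence half of the valuative criterion, must force the subspace to be saturated in both directions. I would handle this by a valuative-criterion argument: given a limit in $Y_1$ of a family inside the first factor while the second coordinate is held at a fixed point with a \fcj{}-class representative, the corresponding limit in the product lies in $\overline{\mathcal J}^d(\mathcal X/B)$ by properness, so its first coordinate lies in the projection; this shows the projection to $Y_1$ is itself proper and that the subspace contains the full product over it. Checking that one may hold the second coordinate fixed along such a degeneration — \ie{} that the gluing really does decouple the two degeneration problems at the separating node — is where the locally-free-at-the-node observation does the essential work.
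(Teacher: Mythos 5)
Your proposal follows essentially the same route as the paper's proof: the observation that a simple sheaf must be locally free at a separating node (since a singularity there would disconnect the curve) gives the decomposition $\operatorname{Simp}^{d}(\mathcal X/B) \cong \coprod_{d_1+d_2=d} \operatorname{Simp}^{d_1}(\mathcal X_1/B_1) \times \operatorname{Simp}^{d_2}(\mathcal X_2/B_2)$, local constancy of the degrees of the restrictions together with connectedness of the fibres singles out a unique summand, and the two factors are then read off from the projections. The saturation step that you flag as the main obstacle --- showing that the fine compactified Jacobian is a genuine product of its two projections rather than just a fibrewise-connected proper open subspace of one --- is treated no more explicitly in the paper (it is dispatched in one sentence), so your proposal matches the paper's argument in both structure and level of detail.
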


\begin{proof}
The fibres $X_b$ of $\mathcal X/B$ consist of two curves glued together along a separating node. 
Note that the restriction of a torsion-free sheaf to an irreducible component fails to be itself torsion-free if the original sheaf is singular at some of the intersection points of the component and the closure of its complement.

If $F$ is a rank~$1$ simple sheaf on a curve $X$, then its stalk $F_p$ at a separating node $p$ is locally free. We deduce that restricting to $(\mathcal X_1\times B_2)/B$ and $(B_1\times \mathcal X_2)/B$ induces an isomorphism
\[\begin{array}{rl}
\operatorname{Simp}^{d}(\mathcal X/B) &\cong \coprod_{d_1+d_2=d} \operatorname{Simp}^{d_1}(\mathcal X_1/B_1) \times \operatorname{Simp}^{d_2}(\mathcal X_2/B_2).
\end{array}
\]
Our statement then follows from the fact that the Euler characteristic of the restriction of the sheaves in a (flat) family of sheaves to the family of subcurves $\mathcal X_i$ is locally constant for $i=1,2$. 
Hence the total degree of the restriction of the sheaves is also locally constant. 
The fact that the image $\overline{\mathcal{J}}^{d_j}(\mathcal X_j/B_j)$ of the projection $\overline{\mathcal{J}}^d(\mathcal X/B)\rightarrow \Simp{d_1}{\mathcal X_j/B_j}$ arises as the pull-back of a \fcuj follows from the fact that the gluing map at a separating node is defined at the level of moduli spaces of curves.
\end{proof}
 
 \subsection{Some low genus results}
In this section we recall some basic results on \fcjs of curves of low genus.

If $X$ has arithmetic genus $0$, then it is a tree of rational curves, hence its fine compactified Jacobians are easy to describe from Lemma~\ref{separatingnode}.

\begin{corollary} \label{genus0} If $X$ is a nodal curve of arithmetic genus $0$, every fine compactified Jacobian $\overline{\mathcal J}^d(X)$ is isomorphic to a point.
  \end{corollary}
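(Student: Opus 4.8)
The plan is to argue by induction on the number $N$ of irreducible components of $X$, exploiting the fact (recalled just above) that a nodal curve of arithmetic genus~$0$ is a tree of rational curves, so that \emph{every} node of $X$ is separating.

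For the base case $N=1$, the curve $X$ is an irreducible nodal curve of arithmetic genus~$0$. Since for an irreducible curve the arithmetic genus is the sum of its geometric genus and the number of its nodes, $X$ must be a smooth rational curve $\Pp1$. By Lemma~\ref{irreducible} the only \fcj{} is $\operatorname{Simp}^d(X)=\pic^d(\Pp1)$, which is a single reduced point, since every rank~$1$ torsion-free sheaf on a smooth curve is a line bundle and line bundles on $\Pp1$ are classified by their degree.

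For the inductive step with $N\geq 2$, I would pick any node $p$ of $X$; as the dual graph is a tree, $p$ is separating, and $X=X_1\cup_p X_2$ with $X_1,X_2$ trees of rational curves (hence again of arithmetic genus~$0$) having strictly fewer components. The node $p$ is an ordinary double point of $X$, so it is a \emph{smooth} point of each $X_i$, which allows me to apply Lemma~\ref{separatingnode} to the single curve $X$ (taking $B_1=B_2=\spec\C$ and the distinguished sections to be the point $p$ on each $X_i$). That lemma provides, for every \fcj{} $\overline{\mathcal J}^d(X)$, a decomposition
\[
\overline{\mathcal J}^d(X)\cong \overline{\mathcal J}^{d_1}(X_1)\times \overline{\mathcal J}^{d_2}(X_2)
\]
for some partition $d_1+d_2=d$ and some fine compactified Jacobians of the two subcurves. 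By the inductive hypothesis each factor is a point, so the product is a point as well.

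The argument is essentially mechanical once Lemma~\ref{separatingnode} is available; the only point requiring a small check is that the separating node is a smooth point of each subcurve $X_i$, so that the hypothesis of Lemma~\ref{separatingnode} (the distinguished sections must factor through the smooth locus) is genuinely met, and this is immediate from the local structure of an ordinary double point. I therefore expect no substantive obstacle: the entire content of the statement is absorbed into the base case (the Jacobian of $\Pp1$) and the product structure supplied by Lemma~\ref{separatingnode}.
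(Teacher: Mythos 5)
Your proof is correct and follows essentially the same route as the paper's: both apply Lemma~\ref{separatingnode} with $B_1=B_2=\operatorname{pt}$ and induct on the components of the tree (the paper peels off leaves, you split at an arbitrary node — an immaterial difference). Your explicit base case via Lemma~\ref{irreducible} and the check that the separating node is a smooth point of each subcurve are details the paper leaves implicit, but the substance is identical.
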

  \begin{proof}
  All nodes of a curve of arithmetic genus $0$ are separating, so its dual graph is a tree, and its irreducible components are nonsingular and of geometric genus $0$.  The result follows by applying Lemma~\ref{separatingnode} with $B_1=B_2=\operatorname{pt}$, and by reasoning inductively on each irreducible component starting from the leaves.
  \end{proof}
 
Another consequence of Lemma~\ref{separatingnode} is that the existence of rational tails does not contribute to the geometry of a fine compactified Jacobian. %

\begin{corollary}\label{forgettails} Assume $X= Z \cup (Y_1 \cup \ldots \cup Y_k)$ is a nodal curve such that the $Y_i$ are irreducible components of rational tails of $X$. Let $\overline{\mathcal J}^d(X)$ be a fine compactified Jacobian of $X$, and let $d_i$ be the degree of the restriction of an element of $\overline{\mathcal J}^d(X)$ to $Y_i$.  For $d'=d-\sum d_i$, there exists a unique fine compactified Jacobian $\overline{\mathcal J}^{d'}(Z)$ such that restricting sheaves to $Z$ induces an isomorphism \[ \overline{\mathcal J}^d(X) \to \overline{\mathcal J}^{d'}(Z).\]
In particular, all elements of $\overline{\mathcal J}^d(X)$ have the same degree $d_i$ when restricted to $Y_i$.
\end{corollary}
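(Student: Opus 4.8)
The plan is to strip off the rational tails one irreducible component at a time, repeatedly invoking Lemma~\ref{separatingnode} together with Corollary~\ref{genus0}, organised as an induction on the number $k$ of irreducible components appearing in the rational tails. The base case $k=0$ is vacuous, since then $X=Z$ and there is nothing to prove.

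For the inductive step I would first observe that every nonempty rational tail, being a tree of smooth rational curves attached to $Z$ at a single point, contains at least one irreducible component $Y_i$ that is a leaf of the dual graph $\Gamma(X)$, i.e. that meets the closure $\overline{Y_i^c}$ of its complement in exactly one point. Writing $X = Y_i \cup X'$ with $X' = \overline{Y_i^c}$, that common point is a separating node, and I would apply Lemma~\ref{separatingnode} with $B_1 = B_2 = \operatorname{pt}$, the distinguished sections being the two branches of the node (smooth points of $Y_i$ and of $X'$ respectively). This produces a unique splitting $d = d_i + d''$ of the total degree and unique fine compactified Jacobians with $\overline{\mathcal J}^d(X) \cong \overline{\mathcal J}^{d_i}(Y_i) \times \overline{\mathcal J}^{d''}(X')$ via restriction to the two subcurves. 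Since $Y_i \cong \mathbb{P}^1$ has arithmetic genus $0$, Corollary~\ref{genus0} shows $\overline{\mathcal J}^{d_i}(Y_i)$ is a point, so restriction to $X'$ alone induces an isomorphism $\overline{\mathcal J}^d(X) \xrightarrow{\sim} \overline{\mathcal J}^{d''}(X')$. Here $d_i$ is exactly the degree of an element of $\overline{\mathcal J}^d(X)$ restricted to $Y_i$: as $Y_i$ is smooth and irreducible its total degree and its component degree coincide, so there is no ambiguity, and the uniqueness of the partition in Lemma~\ref{separatingnode} shows this degree is the same for every element, which is precisely the final assertion of the corollary.

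I would then check that $X'$ again has the shape assumed in the statement: removing a leaf keeps the dual graph connected, $Z \subseteq X'$, and the remaining components $Y_{i'}$ (for $i'\neq i$) are still the irreducible components of the rational tails of $X'$ (a tail of $X$ minus an extremal leaf is still a genus $0$ subcurve meeting $Z$ in one point, or is empty). Thus $X' = Z \cup \bigcup_{i' \neq i} Y_{i'}$ satisfies the hypotheses with $k-1$ tail components, and by induction there is a unique fine compactified Jacobian $\overline{\mathcal J}^{d'}(Z)$, with $d' = d'' - \sum_{i'\neq i} d_{i'} = d - \sum_i d_i$, such that restriction to $Z$ induces an isomorphism $\overline{\mathcal J}^{d''}(X') \xrightarrow{\sim} \overline{\mathcal J}^{d'}(Z)$. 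Composing the two restriction isomorphisms and using transitivity of restriction gives the desired isomorphism $\overline{\mathcal J}^d(X) \to \overline{\mathcal J}^{d'}(Z)$, again given by restriction to $Z$, and uniqueness of $\overline{\mathcal J}^{d'}(Z)$ propagates from the uniqueness invoked at each stage. I do not anticipate a serious obstacle, since the result is essentially a bookkeeping consequence of Lemma~\ref{separatingnode}; the one point requiring care is the degree accounting, where I must use that a separating node is a locally free point of any simple sheaf (so that total degree is additive across it, as established in the proof of Lemma~\ref{separatingnode}) and that for the smooth irreducible tail component $Y_i$ the total degree equals the multidegree, so that the peeled-off degrees $d_i$ sum to $d - d'$ exactly as claimed.
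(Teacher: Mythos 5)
Your proposal is correct and follows essentially the same route as the paper, which proves this corollary (like Corollary~\ref{genus0}) by applying Lemma~\ref{separatingnode} with $B_1=B_2=\operatorname{pt}$ and inducting on the irreducible components starting from the leaves of the dual graph. Your write-up merely makes explicit the degree bookkeeping and the verification that the peeled curve again satisfies the hypotheses, details the paper omits.
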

\begin{proof}
Similar to \ref{genus0} and omitted.
\end{proof}

We conclude by recalling some specific properties of the moduli space of simple sheaves in the genus $1$ case.
\begin{proposition}\label{simple:I_n}
Let $X$ be a nodal curve of genus $1$, %
then
\begin{enumerate} 
\item If $X$ has at least one nonseparating node, the generalised Jacobian $\mathcal J^{\mathbf 0}(X)$ is isomorphic to the multiplicative group $\mathbb{G}_m$.
\item The nonsingular locus of $\Simp dX$ is the locus of line bundles. If two line bundles $L_1, L_2$ belong to the same irreducible component of $\Simp dX$, then we have 
$\deg L_1|_{Y} = \deg L_2|_{Y}$ for each irreducible component $Y$ of $X$.
\item\label{atmost1node} The singular points of $\operatorname{Simp}^d(X)$ are nodes. They parametrise sheaves that are singular at exactly one nonseparating node of $X$. 
\end{enumerate}
\end{proposition}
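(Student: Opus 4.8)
The plan is to prove the three assertions in turn, handling (1) through the structure theory of the generalised Jacobian and (2)--(3) through the local deformation theory of torsion-free sheaves at a node. For (1), I would use the standard presentation of the generalised Jacobian of a nodal curve as an extension
\[
1 \longrightarrow \mathbb{G}_m^{b_1(\Gamma(X))} \longrightarrow \mathcal{J}^{\mathbf 0}(X) \longrightarrow \prod_i \mathcal{J}^{\mathbf 0}(\tilde X_i) \longrightarrow 0,
\]
where the $\tilde X_i$ are the normalisations of the irreducible components of $X$ and $b_1(\Gamma(X))$ is the first Betti number of the dual graph (see \cite[Chapter~9.2]{blr}). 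Substituting the genus identity $p_a(X)=\sum_i g(\tilde X_i)+b_1(\Gamma(X))$ into $p_a(X)=1$, and using that $b_1(\Gamma(X))\geq 1$ as soon as $X$ has a nonseparating node, forces $b_1(\Gamma(X))=1$ and $g(\tilde X_i)=0$ for all $i$. The abelian quotient then vanishes and $\mathcal{J}^{\mathbf 0}(X)\cong\mathbb{G}_m$.

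For (2) and (3) I would first set up the smooth points and then constrain the non-locally-free sheaves. The line bundle locus decomposes by multidegree as $\pic^d(X)=\coprod_{\underline d}\pic^{\underline d}(X)$, and the free action of $\mathcal{J}^{\mathbf 0}(X)$ on $\pic^d(X)$ recalled above exhibits each nonempty stratum $\pic^{\underline d}(X)$ as a torsor under the smooth group $\mathcal{J}^{\mathbf 0}(X)$; hence every line bundle is a nonsingular point, each stratum is connected of dimension $g=1$, and the multidegree is locally constant on $\pic^d(X)$ (concretely, at a line bundle $L$ the tangent space $\operatorname{Ext}^1(L,L)=\coh 1{\mathcal O_X}$ has dimension $g=1=\dim\operatorname{Simp}^d(X)$). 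Next, the simplicity criterion from the Notation section forbids singularities at separating nodes, since removing a separating point disconnects $X$; thus the singular points of a simple sheaf are nonseparating. As $b_1(\Gamma(X))=1$ means the nonseparating nodes form a single cycle, any two of which disconnect $X$, a simple non-locally-free sheaf can be singular at \emph{at most one} node, hence at exactly one nonseparating node.

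The decisive step, which I expect to be the main obstacle, is to identify the local structure of $\operatorname{Simp}^d(X)$ at such a sheaf $F$. Completing at the node $p$ where $F$ fails to be locally free, the stalk $F_p$ is the maximal ideal $\mathfrak m$ of $R=\mathbb{C}[\![x,y]\!]/(xy)$, and I would compute the versal deformation of $\mathfrak m$ as an $R$-module (following the deformation theory of \cite{altman80} and \cite{esteves}). This deformation space is a node $\operatorname{Spec}\mathbb{C}[\![u,v]\!]/(uv)$, whose two branches are the two ways of smoothing $F$ to a line bundle, that is, the two multidegree strata abutting $[F]$; the remaining smooth factor comes from $\mathcal{J}^{\mathbf 0}(X)$ on the rest of $X$. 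The difficulty here is genuinely local and homological: $\mathfrak m$ has infinite projective dimension over the singular ring $R$, so one cannot conclude by a naive dimension count on $\operatorname{Ext}^1$ but must exhibit the nodal versal family directly. Granting this, $[F]$ is a node of $\operatorname{Simp}^d(X)$, hence a singular point.

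Assembling the pieces then completes (2) and (3). The smoothness computation together with the node computation shows that the nonsingular locus of $\operatorname{Simp}^d(X)$ is exactly the line bundle locus; and since the smooth locus of an irreducible variety is irreducible, hence connected, any two line bundles lying in a common irreducible component of $\operatorname{Simp}^d(X)$ must lie in a single stratum $\pic^{\underline d}(X)$ and so have the same multidegree, which is (2). Finally the versal deformation identifies the singular points as precisely the sheaves singular at exactly one nonseparating node, and shows each is a node of $\operatorname{Simp}^d(X)$, which is (3).
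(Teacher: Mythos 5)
Your proposal is correct and follows essentially the same route as the paper: the genus count for (1), the deduction from simplicity that a simple sheaf can be singular at most at one (necessarily nonseparating) node, and the connectivity argument pinning down the multidegrees of line bundles in a common irreducible component all mirror the paper's proof, which handles (2) via the action of $\mathcal J^{\mathbf 0}(X)$ on the line-bundle strata. The step you explicitly flag as the main obstacle and leave granted --- that the versal deformation of the non-locally-free stalk $\mathfrak m$ over $\mathbb{C}[\![x,y]\!]/(xy)$ produces a nodal local model, so that the singular points of $\Simp dX$ are exactly the noninvertible sheaves and are nodes --- is precisely the input the paper also does not prove but imports by citing \cite[Theorem~5.10]{cmkv1}, and the local model you assert agrees with that theorem, so your ``granting'' matches the paper's citation.
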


\begin{proof}
The first point is standard and follows from the fact that $X$ is connected, with arithmetic genus $1$ more than the sum of the geometric genera of all its components. (See \cite[p.~90]{acg2}).

The fact that the singularities are at worst nodes and that the singular points correspond to the noninvertible sheaves is a consequence of \cite[Theorem~5.10]{cmkv1}. 
 
The other assertions follow then from analysing the action of the generalised Jacobian $\mathcal J^{\mathbf 0}(X)$ (introduced in the beginning of Section~\ref{generalfine}), in particular from the fact that the action of $\mathcal J^{\mathbf 0}(X)$ respects the decomposition into the nonsingular and the singular locus of $\Simp dX$ and that the $\mathcal J^{\mathbf 0}(X)$-orbit of a line bundle $L$ consists of all line bundles with the same multidegree as $L$.

The second part of \eqref{atmost1node} follows from the assumption that $F$ is simple. That implies that the singular points of $F$ do not disconnect the curve $X$ (the automorphism group of $F$ contains an algebraic torus of dimension equal to the number of connected components of $X$ minus the singular points of $F$). Since we are assuming that $X$ has arithmetic genus $1$, a nonseparating set of nodes can contain at most $1$ element. 
\end{proof}

\section{Fine compactified Jacobians of necklace curves}
\label{genus1isolated}

This section is devoted to the study of \fcjs of nodal curves of genus $1$ without rational tails. 
By Corollary~\ref{forgettails} this is enough to describe the \fcjs of all nodal curves of genus $1$. It will be convenient to have the following concise name for the curves studied in this section.

\begin{definition} \label{necklace}
    We say that a genus~$1$ stable pointed curve is a \emph{necklace curve} if it cannot be disconnected by resolving one of its nodes.
\end{definition}

\begin{remark}
Necklace curves appear in the Kodaira classification of fibres of elliptic fibrations as curves of type $I_k$. In particular, nonsingular curves of genus~$1$, i.e. curves of Kodaira type $I_0$, are considered as necklace curves.
\end{remark}

Our aim is to give a classification of the \fcjs of a fixed necklace curve $X$.  
Our main technical tool is Lemma~\ref{description-I_n}, where we give a combinatorial classification of all degree $d$ \fcjs of $X$. This will allow us to exhibit in Remark~\ref{strangeexamples} examples of fine compactified Jacobians that are \emph{not} smoothable (see Definition~\ref{smoothable}). Another consequence is Corollary~\ref{cor:fcjs}, where we show that the smoothable Jacobians of $X$ are, up to translation, in one-to-one correspondence with 
the cyclic orderings %
 of the set of nodes of $X$.

One important combinatorial characterisation of the smoothable fine compactified Jacobians of necklace curves, which we will use in 
Section~\ref{fcuj-classification} to classify all genus~$1$ \fcujs{}, is the following:

\begin{proposition}
\label{f-for-fcjs}
Let $X$ be a necklace curve with $n\geq 2$ components. 
Let 
\[\mathcal C_{n-1}:=\left\{I=\{r,r+1,\dots,s\}: 1\leq r\leq s\leq n-1\right\}\] 
be the set of sequences of consecutive integers between $1$ and $n-1$, and let
$f\colon \mathcal C_{n-1}\rightarrow \Z$ be an integer valued function
 that satisfies the mild superadditivity condition:
\begin{equation}\label{condfunction}
0\leq f(I\cup J) - f(I) - f(J) \leq 1
\end{equation}
whenever $I$ and $J$ are disjoint and all three sets $I,J,I\cup J$ belong to $\mathcal C_{n-1}$. 

Then for every  $d\in \mathbb{Z}$ there exists a unique \emph{smoothable} degree $d$ \fcj{} $\overline{\mathcal J}^d(X)$ such that 
\[\min\left\{\sum_{i\in I}d_i:\; \mathcal J^{(d_1,\dots,d_n)}(X)\subseteq \overline{\mathcal J}^d(X)\right\} = f(I)
\]
holds for all $I\in\mathcal C_{n-1}$.
\end{proposition}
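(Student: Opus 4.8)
The plan is to realise every smoothable degree $d$ \fcj of $X$ as a polarised one $\overline{\mathcal J}^d_\phi(X)$ for a nondegenerate $\phi\in V^d(\Gamma(X))$, to reinterpret the defining minimum in terms of the floors $\lfloor\phi(I)\rfloor$, and finally to invert the resulting correspondence $\phi\mapsto f$ using superadditivity. Throughout I fix the cyclic ordering $X_1,\dots,X_n$ of the components of the necklace compatible with the coordinates of Remark~\ref{comparenodal}, so that $V^d(\Gamma(X))\cong\mathbb R^{n-1}$ with $x_i=\phi(X_i)$ and $\phi(X_n)=d-\sum_{i<n}x_i$; for $I\subseteq\{1,\dots,n\}$ I write $\phi(I):=\sum_{i\in I}\phi(X_i)$. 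By Remark~\ref{comparenodal} the smoothable \fcjs of $X$ are exactly the $\overline{\mathcal J}^d_\phi(X)$ with $\phi$ nondegenerate (Proposition~\ref{polarisedarefine} gives one inclusion and the classification Lemma~\ref{description-I_n} the other), and two of them agree precisely when their polarisations lie in the same chamber of the complement of the walls $\{\phi(I)\in\mathbb Z\}$, $I\in\mathcal C_{n-1}$.

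First I would compute which multidegrees occur. Every proper subcurve of a necklace is an arc meeting its complement in exactly two nodes, so for a line bundle ($\delta=0$) the stability inequality \eqref{Eqn: SymDefOfStability} on an arc $X_0$ reads $|\deg_{X_0}(L)-\phi(X_0)|<1$. An arc through $X_n$ is the complement of an arc $I\in\mathcal C_{n-1}$ and yields the same inequality as $I$, so $\mathcal J^{(d_1,\dots,d_n)}(X)\subseteq\overline{\mathcal J}^d_\phi(X)$ holds iff $\sum_{i\in I}d_i\in\{\lfloor\phi(I)\rfloor,\lceil\phi(I)\rceil\}$ for every $I\in\mathcal C_{n-1}$. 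In particular the minimum in the statement is bounded below by $f_\phi(I):=\lfloor\phi(I)\rfloor$, and $f_\phi$ satisfies the superadditivity condition because $\lfloor a+b\rfloor-\lfloor a\rfloor-\lfloor b\rfloor\in\{0,1\}$. To see that the minimum is actually \emph{attained}, I pass to partial sums: writing $D(j)=\sum_{i\le j}d_i$ and letting $t(j)$ be the fractional part of $\phi(\{1,\dots,j\})$ (with $t(0)=0$; nondegeneracy makes the $t(j)$ pairwise distinct), stability forces $D(j)=\lfloor\phi(\{1,\dots,j\})\rfloor+\beta(j)$ with $\beta(j)\in\{0,1\}$, and a short computation shows that the whole system of inequalities is equivalent to $\beta$ being monotone for the total order of the $t(j)$. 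For a fixed $I=\{r,\dots,s\}$ the constant $\beta\equiv0$ realises $\sum_{i\in I}d_i=f_\phi(I)$ when $t(s)>t(r-1)$, and the threshold $\beta(j)=[\,t(j)>t(s)\,]$ does so when $t(s)<t(r-1)$. Hence $f_\phi$ is exactly the function appearing in the statement.

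It then remains to show that every superadditive $f$ arises as $f_\phi$ for a unique chamber. Uniqueness is immediate: two distinct chambers are separated by some wall $\{\phi(I)\in\mathbb Z\}$ across which $\lfloor\phi(I)\rfloor$ jumps, so $f_\phi$ determines the chamber and, by the first paragraph, the \fcj. For existence I would build $\phi$ from $f$ by prescribing its prefix sums $\phi(\{1,\dots,s\})=f(\{1,\dots,s\})+\alpha(s)$ with $\alpha(s)\in(0,1)$ and $\alpha(0)=0$. Setting $c(r,s):=f(\{1,\dots,s\})-f(\{1,\dots,r-1\})-f(\{r,\dots,s\})\in\{0,1\}$, the requirement $\lfloor\phi(\{r,\dots,s\})\rfloor=f(\{r,\dots,s\})$ unwinds into the comparison $\alpha(s)>\alpha(r-1)$ when $c(r,s)=0$ and $\alpha(s)<\alpha(r-1)$ when $c(r,s)=1$. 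This is a tournament on $\{0,1,\dots,n-1\}$ with $\alpha(0)=0$ minimal, realisable by distinct reals in $(0,1)$ exactly when it is transitive.

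The transitivity check is the crux of the argument, and it is precisely here that superadditivity is used. For a triple $i<j<k$ one has the identity $c(i+1,k)=c(i+1,j)+c(j+1,k)-\bigl(f(\{i+1,\dots,k\})-f(\{i+1,\dots,j\})-f(\{j+1,\dots,k\})\bigr)$, obtained by telescoping the prefix values. Since the bracketed term lies in $\{0,1\}$ by applying the superadditivity hypothesis to the disjoint union $\{i+1,\dots,j\}\cup\{j+1,\dots,k\}=\{i+1,\dots,k\}\in\mathcal C_{n-1}$, and since each $c\in\{0,1\}$, this identity forbids both possible $3$-cycles: a prescription $c(i+1,j)=c(j+1,k)=0$, $c(i+1,k)=1$ would force $c(i+1,k)\le 0$, and $c(i+1,j)=c(j+1,k)=1$, $c(i+1,k)=0$ would force $c(i+1,k)\ge 1$. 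The tournament is therefore transitive, any $\alpha$ realising it gives a nondegenerate $\phi$ with $f_\phi=f$, and $\overline{\mathcal J}^d_\phi(X)$ is smoothable by Proposition~\ref{polarisedarefine}. I expect this transitivity step to be the main obstacle, since it is the only point where the mild superadditivity of $f$ enters, and it is exactly what makes the condition both necessary and sufficient.
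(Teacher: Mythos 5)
Your proposal is correct, and the two halves of it relate to the paper's proof in different ways. The forward direction is essentially the same: the paper also invokes Proposition~\ref{singleispolarised} to write a smoothable \fcj{} as $\overline{\mathcal J}^d_\phi(X)$ and identifies the minima $f(I)$ with the integers cutting out the stability chamber of $\phi$, i.e.\ with $\lfloor\phi(I)\rfloor$ (your floor-function derivation of mild superadditivity, and your explicit monotone-$\beta$ argument that the minimum is actually attained, are carried out in more detail than in the paper, which defers these checks to Corollary~\ref{cor:fcjs}). The existence direction, however, is where you genuinely diverge: the paper proves surjectivity by a counting argument --- it asserts, by induction on $n$, that there are exactly $(n-1)!$ mildly superadditive functions with prescribed singleton values, and matches this against the $(n-1)!$ smoothable \fcjs{} up to translation from Corollary~\ref{cor:fcjs} --- whereas you construct a nondegenerate polarisation directly from $f$ by prescribing prefix sums $\Phi(s)=f(\{1,\dots,s\})+\alpha(s)$ and showing that the induced tournament on $\{0,\dots,n-1\}$ is transitive, using precisely the superadditivity identity $c(i+1,k)=c(i+1,j)+c(j+1,k)-B$ with $B\in\{0,1\}$. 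Your route buys a constructive inverse (an explicit $\phi$ realising any given $f$) and avoids the paper's two unproved enumerative claims; the paper's route is shorter given that Corollary~\ref{cor:fcjs} is already available, and yields the count $(n-1)!$ as a by-product. Two small points to tidy up: the inclusion ``smoothable $\Rightarrow$ polarised'' should be cited as Proposition~\ref{singleispolarised} rather than Lemma~\ref{description-I_n}; and your uniqueness sentence (``$f_\phi$ determines the chamber'') is best justified by noting that the locus $\{f(I)<\phi(I)<f(I)+1 \text{ for all } I\in\mathcal C_{n-1}\}$ is convex, hence connected, so distinct chambers necessarily have distinct floor vectors, and that stability of \emph{all} simple sheaves (not only line bundles) depends only on these floors.
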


The notation in this Proposition uses the following conventions for necklace curves and the multidegree of simple sheaves on them, which we will adopt for the rest of the section.
\begin{notation}
\begin{enumerate}
    \item \label{nota-i}
Let $X$ be a necklace curve with $n\geq 2$ components.
We choose an orientation of the dual graph of $X$ and order the $n$ components of $X$ as $\{C_i:\;i\in\Z/n\Z\}$ according to this orientation. In particular, for each index $i$ the components $C_i$ and $C_{i+1}$  intersect at $1$ point, which we denote by $P_i$. We will also choose a distinguished marked point for each component and call it $Q_i\in C_i\setminus\{P_{i-1},P_i\}$ for each $i\in\Z/n\Z$. 
\item For $\mathbf d \in \Z^n$, define   $\mathcal J^{\mathbf d}(X)$ of $X$ as the moduli space of line bundles of multidegree $\bf d$ on $X$. (The special case of $\mathcal J^{\mathbf 0}(X)$, known as the generalised Jacobian, has already been introduced at the beginning of Section~\ref{generalfine}). In this section, the components of the multidegree will always be ordered as in \eqref{nota-i}.
\item  Let $F\in \operatorname{Simp}(X)$ be noninvertible.
Then the sheaf $F$ is singular at a unique node $P_j$ of $X$ by  Proposition~\ref{simple:I_n}. 
 By work of Seshadri 
(see e.g. \cite[Lemma 1.5]{alexeev-cjtm})
the sheaf $F$ can be obtained as the direct image $F=f_*L'$ of an invertible sheaf $L'$ of total degree $d-1$ on the partial normalisation $f\colon\tilde X_j\rightarrow X$ of $X$ at $P_j$. Hence, if $F\in\Simp dX$ is noninvertible, 
we can associate with it a node $P_j\in X$ and the multidegree $\mathbf d' = (\deg_{\tilde C_1}(L'),\dots,\deg_{\tilde C_n}(L'))$ of $L'$, where $\tilde C_i$ denotes the component of $\tilde X_i$ mapping to $C_i$. 
We denote by $N_j^{\mathbf d'}$ the sheaf (unique up to isomorphism) that is singular at the node $P_j$ and that is the direct image of an invertible sheaf  of multidegree $\mathbf d'=(d'_1,\dots,d'_n)$ on $\tilde X_j$. 
\end{enumerate}
\end{notation}

By Proposition~\ref{simple:I_n}, the nonsingular locus of $\Simp dX$ is the disjoint union of the open subsets $\mathcal J^{\mathbf d}(X)$ for all $\mathbf d = (d_1,\dots,d_n)$ with $d_1+\dots+d_n=d$. The singular locus of $\Simp dX$ consists of nodes $N_j^{\mathbf d'}$ with $d'_1+\dots+d'_n=d-1$ that correspond to singular sheaves.

Since $X$ has genus~$1$, the moduli space $\operatorname{Simp}^d(X)$ is a $1$-dimensional scheme locally of finite type. For $n\geq 2$ %
it is neither separated nor of finite type. However, as we will see, it still contains an infinite number of different \fcjs{}.

\begin{remark} \label{bananaexample}
In order to gain some intuition, let us start by considering the case of sheaves of total degree $d=0$ on a necklace curve $X$ with $n=2$ components. 

Then $\Simp 0X$ is
the disjoint union of open subsets $\mathcal J^{(\alpha,-\alpha)}(X)\cong \mathbb G_m$ for all $\alpha\in\Z$ and points of the form $N_1^{(\beta,\beta-1)}$ or $N_2^{(\beta,\beta-1)}$ for all $\beta\in\Z$, representing sheaves singular at $P_1$ and $P_2$, respectively. To describe the closure of $\mathcal J^{(\alpha,-\alpha)}(X)$ in $\Simp 0X$, we need to describe which noninvertible sheaves arise as flat limits of invertible sheaves of multidegree $(\alpha,-\alpha)$: it is easy to see that they correspond exactly to the four points $N_1^{(\alpha-1,-\alpha)}$, $N_2^{(\alpha-1,-\alpha)}$, $N_1^{(\alpha,-\alpha-1)}$ and $N_2^{(\alpha,-\alpha-1)}$. This is summarised in Figure~\ref{figure-n=2}, where the horizontal lines represent the open subsets of the form $\mathcal J^{(\alpha,-\alpha)}(X)$ and the symbols $\bullet$ and $\mathbin{\vcenter{\hbox{\scalebox{0.5}{$\blacksquare$}}}}$ represent sheaves singular at $P_1$ and at $P_2$, respectively. 
\begin{figure}

\[\xymatrix@R=0.4em@C=1ex{
&&&& &&&&&\\
&&&& &&&&&\\
&{ \blacksquare}\ar@{.}[uu] &&&\ar@{.}[uu] &&& { \mathbin{\vcenter{\hbox{\scalebox{2}{$\bullet$}}}}}\ar@{.}[uu]\\
\ar@{-}[r]&\ar@{-}[rrrrrr]^{(2,-2)}&&& &&&\ar@{-}[r]&\\
(1,-2)&{ \mathbin{\vcenter{\hbox{\scalebox{2}{$\bullet$}}}}} &&& &&& { \blacksquare} &(1,-2)\\
\ar@{-}[r]&\ar@{-}[rrrrrr]^{(1,-1)}&&& &&&\ar@{-}[r]&&\\
(0,-1)&{ \blacksquare} &&& &&& { \mathbin{\vcenter{\hbox{\scalebox{2}{$\bullet$}}}}} &(0,-1)\\
\ar@{-}[r]&\ar@{-}[rrrrrr]^{(0,0)}&&& &&&\ar@{-}[r]&\\
(-1,0)&{ \mathbin{\vcenter{\hbox{\scalebox{2}{$\bullet$}}}}} &&& &&& { \blacksquare} &(-1,0)\\
\ar@{-}[r]&\ar@{-}[rrrrrr]^{(-1,1)}&&&\ar@{.}[ddd] &&&\ar@{-}[r]&\\
(-2,1)&{ \blacksquare}\ar@{.}[dd] &&& &&& { \mathbin{\vcenter{\hbox{\scalebox{2}{$\bullet$}}}}}\ar@{.}[dd] & (-2,1)\\
&&&& &&&&&\\
&& &&&&&&&\\
}\]

\caption{Structure of $\Simp 0X$ for $X$ a necklace curve with $n=2$ components. 
\label{figure-n=2}}
\end{figure}
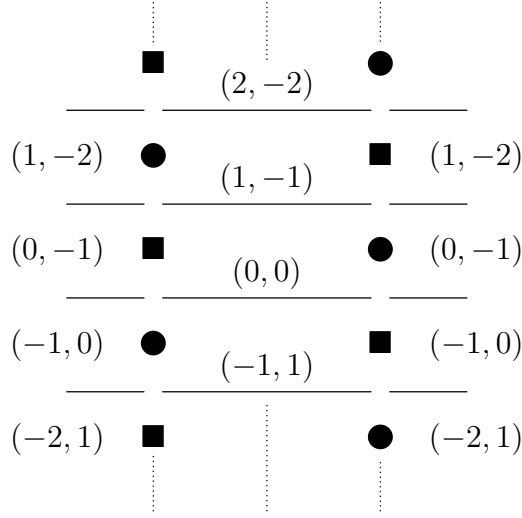

Using the symmetries of the curve $X$, one can check that the closure of $\mathcal J^{(\alpha,-\alpha)}(X)$ in $\Simp 0X$ is a nonseparated scheme isomorphic to $\Pp1$ with a double  origin and a double infinity. For this reason, the irreducible components of any \fcj{} $Y\subset\Simp 0X$ are copies of $\Pp1$ obtained by adding  to an open subset $\mathcal J^{(\alpha,-\alpha)}(X)$ exactly one boundary point over $0$ and one over $\infty$. From the description in Figure~\ref{figure-n=2} 
it then follows that $Y$ must be of the form 
\[Y = \mathcal J^{(\alpha,-\alpha)}(X) \cup \mathcal J^{(\alpha+1,\alpha-1)}(X) \cup \{N_1^{(\alpha,-\alpha-1)}, N_2^{(\alpha,-\alpha-1)}\}
\]
for some $\alpha\in\Z$.
\label{rem:n=2}\end{remark}

In the next Lemma  we generalise Remark~\ref{rem:n=2}, giving a combinatorial classification of all degree $d$ \fcjs{} of a necklace curve $X$ with an arbitrary number of components $n\geq 2$.%

\begin{lemma}\label{description-I_n}
Let $X$ be a necklace curve with $n\geq 2$ components.
If $Y\subseteq \Simp dX$ is a \fcj{},
then $Y$ is a necklace curve with $m=\rho n$ components for some $\rho \geq 1$. Furthermore, there is a sequence of $m$ integers $(j_{\ell}:\;\ell\in\Z/m\Z)$ satisfying 
\[\#\{\ell\in\Z/m\Z:\;j_\ell = k\}=\rho  \text{ for every }k=1,\dots,n\]
and a multidegree $\mathbf D\in \Z^m$ (depending only on $Y$ and $(j_\ell:\;\ell\in\Z/m\Z)$) such that 
\begin{equation}\label{description-Y}
    Y = \bigcup_{1\leq k\leq m}\mathcal J^{\mathbf D + \mathbf d_k}(X) \cup \{N_{j_1}^{\mathbf D + \mathbf d'_1},\dots,N_{j_m}^{\mathbf D + \mathbf d'_m}\}
\end{equation}
where we define the multidegrees $\mathbf d_k$ and $\mathbf d'_k$ as  
\[\begin{array}{ll}
\mathbf d_k&:= \sum_{1\leq \ell\leq k}(\mathbf e_{j_\ell}-\mathbf e_{j_\ell+1}),\\
\mathbf d'_k&:= \mathbf d_k-\mathbf e_{j_k},
\end{array}
\]
where $\mathbf e_i$ is the $i^{\text{th}}$ vector of the standard basis.

Conversely, every subscheme of the form \eqref{description-Y} is a \fcj of $X$, 
 provided that for every $1\leq \rho'\leq \rho -1$, all subsets $(j_{\ell+k}:\;k=1,\dots, \rho'n)$ consisting of $\rho'n$ consecutive indices do \emph{not} contain all indices $1,\dots,n$ with the same multiplicity~$\rho'$. 
The sequence $(j_{\ell}:\;\ell\in\Z/m\Z)$ is unique up to shift  
$(j_{\ell}:\;\ell\in\Z/m\Z)\sim (j_{\ell+k}:\;\ell\in\Z/m\Z)$ by some $k\in\Z/m\Z$.
\end{lemma}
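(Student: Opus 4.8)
The plan is to read off the structure of $\Simp dX$ from Proposition~\ref{simple:I_n} and then reduce everything to a single local incidence statement. By that proposition the smooth locus of $\Simp dX$ is $\bigsqcup_{\mathbf d}\mathcal J^{\mathbf d}(X)$, each piece a copy of $\mathbb G_m$, while its singular locus consists of the nodes $N_j^{\mathbf d'}$. The backbone of the argument is a claim I will call Claim~A: each node $N_j^{\mathbf d'}$ has exactly two branches, lying in the closures of $\mathcal J^{\mathbf d'+\mathbf e_j}(X)$ and of $\mathcal J^{\mathbf d'+\mathbf e_{j+1}}(X)$. That there are exactly two branches is Proposition~\ref{simple:I_n}\eqref{atmost1node}; to identify them I would work locally at the node $P_j$, observing that a line bundle degenerating to $N_j^{\mathbf d'}$ coincides with it away from $P_j$, so its multidegree can differ from $\mathbf d'$ only by transporting one unit of degree across $P_j$ between $C_j$ and $C_{j+1}$. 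This is the banana computation of Remark~\ref{rem:n=2} transplanted into a neighbourhood of $P_j$, and I expect this local identification to be the main obstacle; once it is in place the rest is bookkeeping.

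For the necessity direction, let $Y\subseteq\Simp dX$ be a \fcj. Openness makes $Y$ a union of strata together with some nodes, and openness at a node forces, by Claim~A, both of its adjacent strata to lie in $Y$. Since $Y$ is proper over a point it is a complete curve, and being proper it is separated. I would then argue that each stratum $\mathcal J^{\mathbf d}(X)\cong\mathbb G_m$ contained in $Y$ is completed inside $Y$ to a $\Pp1$ by adjoining exactly one node over each of its two ends: at least one by completeness, and at most one by separatedness, using that the $n$ nodes lying over a single end of a stratum are pairwise non-separated in $\Simp dX$ (they are all limits of the one $K$-point of $\mathcal J^{\mathbf d}(X)$ obtained by letting the monodromy acquire positive valuation along each of the $n$ gluings in turn). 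Hence $Y$ is a connected nodal curve whose components are copies of $\Pp1$ and whose dual graph is $2$-regular; a connected $2$-regular graph is a single cycle, so $Y$ is a necklace curve with some number $m$ of components. Labelling its strata cyclically as $\mathcal J^{\mathbf D+\mathbf d_k}(X)$ with $\mathbf d_0=\mathbf 0$, Claim~A shows that crossing the node between consecutive strata changes the multidegree by $\mathbf e_{j_k}-\mathbf e_{j_k+1}$ for a well-defined $j_k$, which yields $\mathbf d_k=\sum_{1\le\ell\le k}(\mathbf e_{j_\ell}-\mathbf e_{j_\ell+1})$ and identifies that node as $N_{j_k}^{\mathbf D+\mathbf d_k-\mathbf e_{j_k}}$, i.e. $\mathbf d'_k=\mathbf d_k-\mathbf e_{j_k}$. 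Returning to the starting stratum after one loop gives $\mathbf d_m=\mathbf 0$; writing $c_k=\#\{\ell:\ j_\ell=k\}$ this reads $\sum_k(c_k-c_{k-1})\mathbf e_k=\mathbf 0$, forcing all $c_k$ equal to a common $\rho\ge 1$ and $m=\rho n$. This is precisely the form \eqref{description-Y}.

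For the converse I would start from a sequence $(j_\ell)_{\ell\in\Z/m\Z}$ in which each index occurs $\rho$ times, a multidegree $\mathbf D$, and the stated multiplicity hypothesis, and check the three defining properties of a \fcj for the set $Y$ of \eqref{description-Y}. Openness is immediate: each stratum is open in $\Simp dX$, and each listed node $N_{j_k}$ has, by Claim~A, both adjacent strata present in $Y$. The key observation is that the multiplicity hypothesis is equivalent to the partial sums $\mathbf d_0,\dots,\mathbf d_{m-1}$ being pairwise distinct, because $\mathbf d_{k'}=\mathbf d_k$ with $k<k'$ is the same as the block $j_{k+1},\dots,j_{k'}$ containing every index the same number $\rho'$ of times, and then necessarily $k'-k=\rho' n$ with $1\le\rho'\le\rho-1$. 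Distinctness of the $\mathbf d_k$ ensures that the $m$ strata are genuinely distinct and that each one carries exactly one listed node over each of its two ends, namely $N_{j_k}$ and $N_{j_{k+1}}$; thus $Y$ is a single cycle of $m$ copies of $\Pp1$ glued at nodes, hence connected, separated and complete, i.e. a \fcj. If instead a balanced block existed, some stratum would recur and be assigned two distinct nodes over one of its ends, violating separatedness.

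Finally, uniqueness up to shift would follow by noting that the set of strata of $Y$ determines the multidegrees $\mathbf D+\mathbf d_k$ while the nodal adjacencies recover their cyclic order, and hence the sequence $(j_\ell)$; the only freedom is the choice of which stratum is the $0$-th one, which alters $(j_\ell)$ by a cyclic shift and $\mathbf D$ correspondingly. This completes the scheme of proof modulo Claim~A, whose local verification at $P_j$ is, as I said, the step I expect to demand the most care.
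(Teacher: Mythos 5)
Your proposal is correct, and its skeleton coincides with the paper's proof: the local description of $\Simp dX$ near the nodes $N_j^{\mathbf d'}$ (your Claim~A is exactly the paper's first step, also proved there by exhibiting flat limits from the two strata $\mathcal J^{\mathbf d'+\mathbf e_j}(X)$ and $\mathcal J^{\mathbf d'+\mathbf e_{j+1}}(X)$), the completion of each $\mathbb G_m$-stratum of $Y$ by exactly one boundary point per end, the cyclic bookkeeping $\mathbf d_{\ell+1}=\mathbf d_\ell+\mathbf e_{j_\ell}-\mathbf e_{j_\ell+1}$ whose vanishing sum forces all multiplicities to equal a common $\rho$, and, in the converse, the equivalence of the multiplicity hypothesis with pairwise distinctness of the partial sums. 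Where you genuinely diverge is the pivotal separation fact, namely that the $2n$ boundary points of a stratum split into two sets of $n$, one over each end, with points over the same end pairwise non-separated (so that a separated completion must pick one point of each set). The paper proves this via the symmetries of $X$: the cyclic automorphism fixes $\mathcal J^{\mathbf 0}(X)$ pointwise and permutes the type-1 (resp.\ type-2) points transitively, exhibiting their union with $\mathcal J^{\mathbf 0}(X)$ as an affine line with $n$ origins, while the orientation-reversing involution, acting as $L\mapsto L^{-1}$, interchanges the two types together with the two ends. You instead argue valuatively: the monodromy of a multidegree-$\mathbf 0$ line bundle over $\C(\!(t)\!)$ can be concentrated at any one of the $n$ nodes, so the $n$ sheaves singular at each node in turn are flat limits of one and the same $K$-point. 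Both arguments are sound; yours is more hands-on and avoids invoking the automorphisms of the standard necklace curve, while the paper's is quicker once the symmetry is observed. One detail you should make explicit when writing this up: openness only gives that $Y$ meets each stratum $\mathcal J^{\mathbf d}(X)$ in a cofinite open subset, and you need properness together with the fact that line-bundle points are separated points of $\Simp dX$ (a punctured disc inside a stratum has that stratum's point as its unique limit) to conclude that $Y$ contains whole strata; the paper elides the same point, so this is a matter of polish rather than a gap.
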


We will later show that a \fcj $Y$ of a necklace curve $X$ is smoothable if and only if $n=m$ holds in the description of Lemma~\ref{description-I_n}, i.e. $Y$ has the same number of components as $X$.

\begin{proof}%

We start by constructing an affine open cover of the scheme $\Simp dX$. 
If $L\in\Simp dX$ is an invertible sheaf of multidegree $\mathbf d$, then  it is contained in an open subset of the form $\mathcal J^{\mathbf d}(X)\cong \mathbb{G}_m$. It remains to describe a neighbourhood of the nodes $N_j^{\mathbf d'}$ of $\Simp dX$. To identify the two branches of the node $N_j^{\mathbf d'}$, we observe that
the sheaves of the form $N_j^{\mathbf d'}$ can be obtained as flat limits both of invertible sheaves of multidegree $\mathbf d'+\mathbf e_j = (d'_1,\ldots,d'_j+1,d'_{j+1},\ldots,d'_n)$ and of invertible sheaves of multidegree $\mathbf d'+\mathbf e_{j+1} = (d'_1,\ldots,d'_j,d'_{j+1}+1,\ldots,d'_n)$. This shows that there is an open neighbourhood of $N_j^{\mathbf d'}$ in $\Simp dX$ which is isomorphic
to $\{xy=0\} \subset \mathbf A^2$, where the line $\{x=0\}\setminus \{(0,0)\}$ corresponds to $\mathcal J^{\mathbf d'+\mathbf e_j}(X)$ and the line $\{y=0\}\setminus \{(0,0)\}$ corresponds to $\mathcal J^{\mathbf d'+\mathbf e_{j+1}}(X)$.

Next, we consider the question of how to glue together the open neighbourhoods of the singular points $N_j^{\mathbf d'}$ along the nonsingular open subsets $\mathcal J^{\mathbf d}(X)$ of $\Simp dX$ to produce a \emph{proper} subscheme of $\Simp dX$.
 Without loss of generality, it is enough to understand how to glue together two open neighbourhoods of nodes of $\Simp 0X$ along the generalised Jacobian $\mathcal J^{\mathbf 0}(X)$ (the other cases can be obtained from this after tensoring with some invertible sheaf on $X$).
By the discussion above, the Zariski closure of the generalised Jacobian $\mathcal J^{\mathbf 0}(X)$ in $\Simp 0X$ contains a node $N_j^{\mathbf d'}$ of $\Simp 0X$ if and only if we have either $\mathbf d' = -\mathbf e_j$ (type 1) or $\mathbf d' = -\mathbf e_{j+1}$ (type 2). 
Figure~\ref{fig:twotypes-new} exemplifies this in the case $n=3$: the type 1 boundary points are represented on the left hand side and the type 2 ones on the right hand side.

\begin{figure}
    \centering
{\scalebox{0.8}{\includegraphics{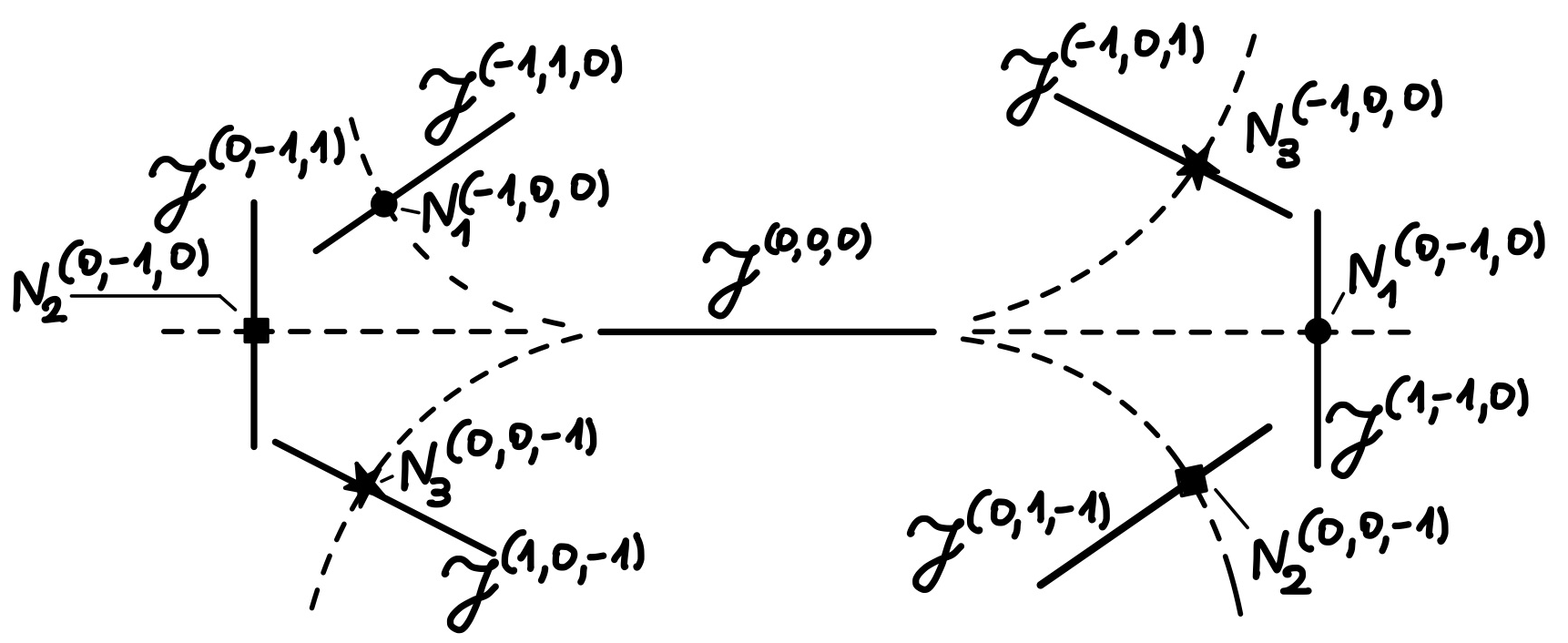}}}

          \caption{Boundary points of $\mathcal J^{\mathbf 0}(X)$ in $\Simp 0X$ in the case $n=3$. The type 1 boundary points are sketched on the left hand side, the type 2 ones are on the right hand side.
    \label{fig:twotypes-new}}
\end{figure}

Note that the boundary of $\mathcal J^{\mathbf 0}(X)$ in $\Simp 0X$ contains points representing sheaves $F$ that may be singular at any of the nodes $P_i$ of $X$.
This is consistent with the action of the cyclic group $\Z/n\Z$ on $\Simp 0X$ induced by the $\Z/n\Z$-action on $X$ generated by $\gamma\in\operatorname{Aut}(X)$ with $\gamma(P_i)=P_{i+1}, \gamma(Q_i)=Q_{i+1}$ for all $i\in\Z/n\Z$. It is easy to check that $\gamma$ acts trivially on the generalised Jacobian $\mathcal J^{\mathbf 0}(X)$. On the other hand, the orbit  of the node $N_j^{-\mathbf e_j}\in\Simp 0X$ under the subgroup generated by $\gamma$ is $\{N_i^{-\mathbf e_i}:\; i\in\Z/n\Z\}$ and the orbit of $N_j^{-\mathbf e_{j+1}}$ is $\{N_i^{-\mathbf e_{i+1}}:\; i\in\Z/n\Z\}$. 
Since all boundary points of type 1 lie in the same $\Z/n\Z$-orbit, the union of $\mathcal J^{\mathbf 0}(X)$ and $\{N_i^{-\mathbf e_i}:\; i\in\Z/n\Z\}$ is nonseparated and isomorphic to an affine line with $n$ origins. The same holds for the union of $\mathcal J^{\mathbf 0}(X)$ and the set $\{N_i^{-\mathbf e_i}:\; i\in\Z/n\Z\}$ of boundary points of type 2.

However, the two types of boundary points, as well as their affine neighbourhoods, are interchanged by any automorphism $\sigma$ of $X$ that interchanges $C_j$ and $C_{j+1}$, such as the one defined by $\sigma(P_i)=P_{2j+1-i}$, $\sigma(Q_i)=Q_{2j-i}$ for all $i\in\Z/n\Z$. Since this automorphism does not respect the cyclic orientation of $X$, we have that %
$\sigma$ acts on $\mathcal J^{\mathbf 0}(X)$ as $\sigma(L) = L^{-1}$. This means that if we want to obtain a separated scheme, we are allowed to glue the neighbourhoods of two nodes of $\Simp 0X$  along $\mathcal J^{\mathbf 0}(X)$ if and only if they are of different types, as shown in Figure~\ref{glued}. In particular, if we consider the curve in Figure~\ref{glued} as oriented counterclockwise, every time we pass from a component $\mathcal J^{\mathbf d}(X)$ to the next one $\mathcal J^{\mathbf d'}(X)$, 
we have to pass through a point that corresponds to a sheaf that is singular at a node $P_k$ of $X$, and 
the multidegree of the components changes with the formula $\mathbf d' = \mathbf d + \mathbf e_k - \mathbf e_{k+1}$. 

\begin{figure}
\[
\xymatrix@=1.5em{
&&&&&&\\
&&&&&&\\
&&&&&&\\
&*=0{\bullet} \ar@{-}[uuu]^{\mathcal J^{\mathbf e_{i+1}-\mathbf e_i}(X)}\ar@{-}[rrr]_{\mathcal J^{\mathbf 0}(X)}\ar@{..}[ld]\ar@{-}[d]\ar@{-}[l]&&&
*=0{\bullet} \ar@{-}[uuu]_{\mathcal J^{\mathbf e_{j}-\mathbf e_{j+1}}(X)}\ar@{..}[rd]\ar@{-}[d]\ar@{-}[r]&\\
{N_i^{-\mathbf e_i}}&&&&&{N_j^{-\mathbf e_{j+1}}}  }
\]
\caption{Gluing along $\mathcal J^{\mathbf 0}(X)$. \label{glued}}
\end{figure}
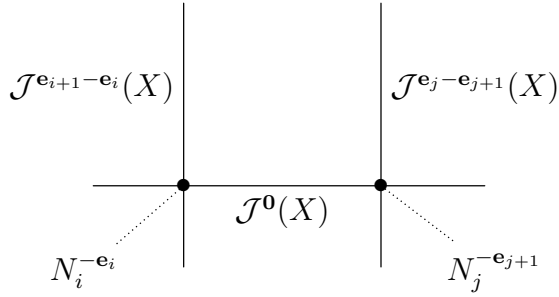

All \fcjs{} $Y\subseteq\Simp dX$ %
will result from a sequence of $m$ such gluings. %
 In particular, the components of $Y$ will be copies of $\Pp1$ meeting at nodes of $\Simp dX$, i.e. the subscheme $Y$ is isomorphic to a necklace curve with $m$ nodes.
By the discussion above, we can always choose a cyclic ordering of the components $Y_1,\dots,Y_m$ of $Y$ (i.e. an orientation of the dual graph of $Y$) such that, if we denote by $\mathbf d_\ell$ the multidegree of a sheaf corresponding to a general point in $Y_\ell$, we have 
\begin{equation}\label{ind}\mathbf d_{\ell+1} = \mathbf d_{\ell}+\mathbf e_{j_\ell}-\mathbf e_{j_\ell+1} \text{ for all }\ell\in\Z/m\Z,\end{equation}
where $P_{j_\ell}$ is the singular point
 of the sheaf corresponding to the intersection $\{F_\ell\} = Y_{\ell-1}\cap Y_{\ell}$. 
Using that $\mathbf d_1=\mathbf d_{m+1}$, we obtain

\begin{equation}\label{eq:zerosum}
\sum_{\ell\in\Z/m\Z}(\mathbf e_{j_\ell}-\mathbf e_{j_\ell+1}) = \mathbf 0.
\end{equation}
Since \eqref{eq:zerosum} is equivalent to the $m$ equalities 
\[\#\{\ell:\;j_\ell =i\} - \#\{\ell:\;j_{\ell+1} =i\}=0,\ \ \ i\in\Z/m\Z\]
we have that all indices $i\in\Z/n\Z$ appear the same number of times $\rho\geq 1$ in the sequence $(j_1,\ldots,j_m)$.
The sequence $(j_\ell:\;\ell\in\Z/m\Z)$ is uniquely determined by $Y$ up to cyclic shift, since it represents the sequence of nodes at which the sheaves in $Y$ are singular. %

At this point, we are ready to prove that $Y$ can be expressed as in the right hand side of Formula~\eqref{description-Y}. Let $F\in Y$ be a sheaf which is singular %
 at the point $P_{j_1}$. Without loss of generality, we may assume that the component preceding $F$ in the cyclic order is $Y_m$ and then set $\mathbf D:=\mathbf d_m$ to be the multidegree of $Y_m$. After taking the tensor product of $Y$ with an invertible sheaf of multidegree $- \mathbf D$, we may assume $d=0$ and $\mathbf D = \mathbf 0$. Then \eqref{ind} allows us to define the general multidegrees $\mathbf d_k$ of the components of $Y$ recursively. By the discussion on the affine charts of $\Simp dX$, the union defined on the right hand side of \eqref{description-Y} is an open substack. 
  This union is proper if and only if no multidegree $\mathbf d$ of a component $\mathcal J^{\mathbf d}(X)$ is repeated more than once. This is equivalent to requiring that no sequence of $0<m'<m$ consecutive indices in $(j_\ell:\;\ell\in\Z/m\Z)$ defines a necklace curve.
\end{proof}

\begin{remark} \label{strangeexamples}
Lemma~\ref{description-I_n} gives a combinatorial construction of all connected, open and proper 
subschemes $Y$ of $\Simp dX$. 
In Proposition~\ref{prop:I_n} below we will show that any \fcj of a necklace curve has the same number of components as the necklace curve itself.
In particular, the case $\rho\geq 2$ in Lemma~\ref{description-I_n}
gives rise to \fcjs of $X$ that are not smoothable. 

We give below an explicit example with $n=3$ and $\rho=2$, in which the sequence of singular nodes is $(1,1,2,2,3,3)$.%

\[
\begin{array}{r@{}l}Y' := &\phantom{\cup\,} \mathcal J^{(-2,0,0)}(X) \cup \{N_1^{(-2,-1,0)}\} \cup  \mathcal J^{(-1,-1,0)}(X) \cup \{N_1^{(-1,-2,0)}\}\\
&\cup\,  \mathcal J^{(0,-2,0)}(X) \cup \{N_2^{(0,-2,-1)}\} 
\cup\,  \mathcal J^{(0,-1,-1)}(X) \cup \{N_2^{(0,-1,-2)}\} \\
&\cup\,  \mathcal J^{(0,0,-2)}(X) \cup \{N_3^{(-1,0,-2)}\} 
\cup\,  \mathcal J^{(-1,0,-1)}(X) \cup \{N_3^{(-2,0,-1)}\}. 
\end{array}
\]

This example can be generalised by taking any value of $n$ and $\rho$ and an $n$-cycle $\sigma\in\s_n$ and considering the sequence 
\[(\underbrace{1,\dots,1}_{\rho\text{ times}},\underbrace{\sigma(1),\dots,\sigma(1)}_{\rho\text{ times}},\dots,\underbrace{\sigma^{n-1}(1),\dots,\sigma^{n-1}(1)}_{\rho\text{ times}}).\]

For $n\geq 4$ there exist also more complicated examples, in which any two consecutive nodes of a fine compactified Jacobian of the necklace curve correspond to sheaves that are singular 
 at two different nodes of $X$. This happens in the following example with $n=4$, $\rho=5$,
\[
\begin{array}{r@{\mspace{-5mu}}l}Y'' := &\phantom{\cup\, } \mathcal J^{(1,-1,2,1)}(X) \cup \mathcal J^{(1,0,1,1)}(X) \cup \mathcal J^{(2,-1,1,1)}(X) \cup \mathcal J^{(2,0,0,1)}(X) \cup\mathcal J^{(1,0,0,2)}(X)\\
& \cup\, \mathcal J^{(1,1,-1,2)}(X) \cup \mathcal J^{(1,1,0,1)}(X) \cup \mathcal J^{(1,2,-1,1)}(X) \cup \mathcal J^{(1,2,0,0)}(X) \cup\mathcal J^{(2,1,0,0)}(X)\\
& \cup\, \mathcal J^{(2,1,1,-1)}(X) \cup \mathcal J^{(1,1,1,0)}(X) \cup \mathcal J^{(1,1,2,-1)}(X) \cup \mathcal J^{(0,1,2,0)}(X) \cup\mathcal J^{(0,2,1,0)}(X)\\
& \cup\, \mathcal J^{(-1,2,1,1)}(X) \cup \mathcal J^{(0,1,1,1)}(X) \cup \mathcal J^{(-1,1,1,2)}(X) \cup \mathcal J^{(0,0,1,2)}(X) \cup\mathcal J^{(0,0,2,1)}(X)\\
& \cup\, \{N_1^{(0,-1,2,1)},N_2^{(1,-1,1,1)},N_1^{(1,-1,1,1)},N_2^{(2,-1,0,1)},N_4^{(1,0,0,1)},\\
& \phantom{\cup\, \{}N_2^{(1,0,-1,2)},N_3^{(1,1,-1,1)},N_2^{(1,1,-1,1)},N_3^{(1,2,-1,0)},N_1^{(1,1,0,0)},\\
& \phantom{\cup\, \{}N_3^{(2,1,0,-1)},N_4^{(1,-1,1,-1)},N_3^{(1,1,1,-1)},N_4^{(0,1,2,-1)},N_2^{(0,1,1,0)},\\
& \phantom{\cup\, \{}N_4^{(-1,2,1,0)},N_1^{(-1,1,1,1)},N_4^{(-1,1,1,1)},N_1^{(-1,0,1,2)},N_3^{(0,0,1,1)}
\},
\end{array}
\]
in which the relevant sequence is $(1,2,1,2,4,2,3,2,3,1,3,4,3,4,2,4,1,4,1,3)$.
\end{remark}

To complete the classification of Lemma~\ref{description-I_n}, we show that any smoothable \fcj of a necklace curve has the same number of components as the necklace curve itself.

\begin{proposition} \label{prop:I_n}
Every smoothable degree $d$ fine compactified Jacobian $\overline{\mathcal J}^d(X)$ of a necklace curve $X$ is isomorphic to $X$. 
\end{proposition}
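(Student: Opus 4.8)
The plan is to feed the classification of Lemma~\ref{description-I_n} into the smoothing hypothesis: that lemma presents $Y:=\overline{\mathcal J}^d(X)$ as a necklace (type $I_m$) curve with $m=\rho n$ components, and I will show that smoothability forces $\rho=1$, whereupon $X$ and $Y$ are both cycles of $n$ copies of $\Pp1$ and hence isomorphic. The case $n=1$ is already Lemma~\ref{irred1}, so assume $n\geq 2$. Fix a smoothing $\mathcal X/T$ over $T=\spec\C[\![t]\!]$ as in Definition~\ref{smoothable}, together with the relative fine compactified Jacobian $\mathcal J:=\overline{\mathcal J}^d(\mathcal X/T)$ with central fibre $Y$. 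Since $\mathcal X/T$ carries a section through its smooth locus, the generic fibre $\mathcal X_\eta$ is a genus~$1$ curve with a $K$-rational point ($K=\C(\!(t)\!)$), i.e. an elliptic curve $E$; by Lemma~\ref{irreducible} and Lemma~\ref{irred1} its only fine compactified Jacobian is $\operatorname{Simp}^d(E)=\operatorname{Pic}^d(E)\cong E$, so $\mathcal J_\eta\cong E\cong\mathcal X_\eta$. As $\mathcal J\to T$ is flat and proper with irreducible generic fibre, every component of $\mathcal J$ dominates $T$, so $\mathcal J$ is irreducible, equal to the closure of $\mathcal J_\eta$.

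The key step is to show that the total space $\mathcal J$ is regular. Away from the finitely many points of $Y$ representing non-locally-free sheaves, $\mathcal J\to T$ is smooth (there it parametrises line bundles), so $\mathcal J$ is regular there. At a point representing a sheaf singular at a node $P_j$ of $X$, the local structure of $\operatorname{Simp}^d(\mathcal X/T)$ is governed by the local structure of the node in the family $\mathcal X/T$: since $\mathcal X$ is nonsingular and $X$ is nodal at $P_j$, étale-locally $\mathcal X$ is $\{xy=t\}$, and correspondingly (by the local analysis of \cite{cmkv1}, deforming the chart $\{xy=0\}$ of $\operatorname{Simp}^d(X)$ exhibited in the proof of Lemma~\ref{description-I_n}) $\mathcal J$ is étale-locally $\{uv=t\}$, which is regular. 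Hence $\mathcal J$ is a regular surface and $\mathcal J\to T$ is a proper flat family of genus~$1$ curves with smooth generic fibre $E$ and special fibre $Y$ of type $I_m$; its $m$ non-smooth points over $T$ are exactly the $m$ nodes of $Y$, each smoothing out in $\mathcal J$, so the family is semistable.

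Finally I invoke the uniqueness of the minimal regular model. Inside the regular surface $\mathcal J$ the components of the $I_m$ fibre $Y$ contain no $(-1)$-curve (for $m\geq 2$ they are $(-2)$-curves meeting in a cycle, and for $m=1$ the fibre is irreducible), so $\mathcal J\to T$ is relatively minimal; the same holds for $\mathcal X\to T$, whose special fibre $X$ is of type $I_n$. Thus $\mathcal X$ and $\mathcal J$ are two relatively minimal regular proper models over $T$ of the genus~$1$ curves $\mathcal X_\eta\cong\mathcal J_\eta$, which are isomorphic over $K$; by the uniqueness of the minimal model of a genus~$1$ curve over a discrete valuation ring (see e.g. \cite{cossec-dolgachev}) there is an isomorphism $\mathcal J\cong\mathcal X$ of $T$-schemes, and restricting to central fibres gives $Y\cong X$, so $m=n$ and $\rho=1$. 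The main obstacle is the regularity of $\mathcal J$, i.e. upgrading the single-curve local structure of \cite{cmkv1} to the family $\mathcal X/T$; once that and the identification $\mathcal J_\eta\cong\mathcal X_\eta$ are in place the conclusion is standard surface theory. (Alternatively, minimal-model theory can be bypassed: any relatively minimal semistable model of $E$ over $T$ has special fibre of type $I_{v(\Delta)}$, with $v(\Delta)$ the order of the discriminant of the common minimal Weierstrass model, so directly $m=v(\Delta)=n$.)
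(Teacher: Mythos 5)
Your proof is correct and follows essentially the same route as the paper's: both use the smoothing to produce a relative fine compactified Jacobian over $\spec \C[\![t]\!]$, prove that its total space is nonsingular via the deformation theory of \cite{cmkv1} combined with the fact (Proposition~\ref{simple:I_n}) that a simple sheaf on a genus~$1$ curve is singular at most at one node, and then conclude from the theory of elliptic fibrations with regular total space. The only real difference is one of explicitness: where the paper appeals in one line to Kodaira's classification, you verify relative minimality of both fibrations (no $(-1)$-curves in $I_m$ fibres) and invoke uniqueness of the relatively minimal regular model (or the discriminant computation), thereby making explicit a step that the paper's citation leaves implicit.
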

For fine compactified Jacobians coming from a polarisation, this was proved in  \cite[Proposition~7.3]{meravi}. 

\begin{proof}

If $X$ is irreducible the result follows immediately from Lemma~\ref{irred1}. From now on we assume that the number of irreducible components of $X$ is $n \geq 2$.

We will use Kodaira's classification of elliptic fibrations (see e.g. \cite[Section~V.7]{BHPvdV}) to conclude. By assumption we have that $X$ is a curve of type $I_n$ and by Lemma~\ref{description-I_n}  we  know that $\overline{\mathcal J}^d(X)$  is a curve of type $I_m$ for $m=\rho n$ some integer multiple of $n$.

Let  $T=\operatorname{Spec} \C [\![t]\!]$ and consider a $1$-parameter smoothing $\mathcal{X}/T$ of $X$ over $T$ (which we always assume to have nonsingular total space) such that $\overline{\mathcal J}^d(X)$ extends to a \fcj{} $\overline{\mathcal J}^d(\mathcal{X}/T)$.  %

We claim that $\overline{\mathcal J}^d(\mathcal{X}/T)$ is nonsingular. 
Indeed, let $\mathcal F$ be a point of $\overline{\mathcal J}^d(\mathcal{X}/T)$ and let us denote by $F\in \overline{\mathcal J}^d(X)$ its central fibre.
Then by \cite[Theorem~A, (ii)]{cmkv1}, a sufficient condition for  $\overline{\mathcal J}^d(\mathcal{X}/T)$ to be nonsingular 
 at $\mathcal F$ is that the singular points of the sheaf $F$ are nodes of $X$ that are independently smoothed in $\mathcal{X}/T$.
 Hence the claim follows from the fact that, since $X$ has genus $1$,  by Proposition~\ref{simple:I_n} the sheaf $F$ is singular at most at $1$ point of $X$.

We have then two elliptic fibrations  $\mathcal{X}/T$ and $\overline{\mathcal{J}}^d(\mathcal{X}/T)$ with nonsingular total space and isomorphic generic fibres. By Kodaira's classification of elliptic fibrations, we conclude that the central fibres, respectively $X$ and   $\overline{\mathcal{J}}^d(X)$, are isomorphic, i.e. that $n=m$.
\end{proof}

We can then combine the results of  Proposition~\ref{prop:I_n} and Lemma~\ref{description-I_n} to give an explicit combinatorial description of the smoothable \fcjs of necklace curves. In the case of a \fcj, the cyclic sequence $(j_1,j_2,\dots,j_n)$ of Lemma~\ref{description-I_n} can be identified with the $n$-cycle $\sigma=(j_1\,j_2\,\cdots\,j_n)\in\s_n$ and the multidegree $\mathbf D$ is the general multidegree of the component of $Y=\overline{\mathcal J}^d(X)$ that contains both a sheaf that is singular at $P_1$ and a sheaf that is singular at $P_{\sigma^{-1}(1)}$.

\begin{corollary}\label{cor:fcjs}
For every smoothable degree $d$ \fcj $\overline{\mathcal J}^d(X)$ of a necklace curve there exist a unique multidegree $\mathbf D = (D_1,\dots,D_n)\in\Z^n$ and a unique $n$-cycle $\sigma\in\s_n$ such that 
$Y=\overline{\mathcal J}^d(X)$ is of the form~\eqref{description-Y},
where we set $j_k:=\sigma^{k-1}(1)$ for all $k\in\Z$. 
\end{corollary}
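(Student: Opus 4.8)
The plan is to deduce Corollary~\ref{cor:fcjs} by combining the two preceding results: Proposition~\ref{prop:I_n}, which forces any smoothable \fcj of a necklace curve $X$ with $n$ components to itself have exactly $n$ components, and Lemma~\ref{description-I_n}, which provides the combinatorial normal form \eqref{description-Y} for \emph{every} \fcj together with the data of a cyclic sequence $(j_\ell:\;\ell\in\Z/m\Z)$ and a multidegree $\mathbf D$. The whole corollary is essentially a translation of these two facts into the language of a single $n$-cycle $\sigma\in\s_n$, so the proof should be short.

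First I would invoke Proposition~\ref{prop:I_n}: smoothability gives $\rho=1$, hence $m=n$ in the notation of Lemma~\ref{description-I_n}. The defining property of $(j_1,\dots,j_n)$ from that lemma is that each index $k\in\{1,\dots,n\}$ occurs with multiplicity $\rho=1$; in other words $(j_1,\dots,j_n)$ is a permutation of $\{1,\dots,n\}$ in which every value appears exactly once. A cyclic sequence of length $n$ in which each of $n$ symbols appears once is precisely the data of an $n$-cycle, so I would set $\sigma:=(j_1\,j_2\,\cdots\,j_n)\in\s_n$, equivalently $j_k=\sigma^{k-1}(j_1)$. To pin down the normalisation stated in the corollary, I would use the shift-ambiguity: Lemma~\ref{description-I_n} says $(j_\ell)$ is determined only up to cyclic shift, and I would use this freedom to rotate so that $j_1=1$, which then gives $j_k=\sigma^{k-1}(1)$ for all $k$ as required.

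Next I would address the multidegree $\mathbf D$ and its geometric characterisation. Lemma~\ref{description-I_n} supplies some $\mathbf D$ making \eqref{description-Y} hold; I would identify it with the multidegree of the distinguished component. By the recursion $\mathbf d_{\ell+1}=\mathbf d_\ell+\mathbf e_{j_\ell}-\mathbf e_{j_\ell+1}$ (Equation~\eqref{ind}) and the definitions $\mathbf d_k=\sum_{1\le\ell\le k}(\mathbf e_{j_\ell}-\mathbf e_{j_\ell+1})$, the component $\mathcal J^{\mathbf D+\mathbf d_0}(X)=\mathcal J^{\mathbf D}(X)$ is precisely the one whose two boundary nodes are $N_{j_n}^{\mathbf D+\mathbf d'_n}$ (singular at $P_{j_n}=P_{\sigma^{-1}(1)}$) and $N_{j_1}^{\mathbf D+\mathbf d'_1}$ (singular at $P_{j_1}=P_1$), so $\mathbf D$ is indeed the multidegree of the component bounded by a sheaf singular at $P_1$ and one singular at $P_{\sigma^{-1}(1)}$.

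The only genuine content beyond bookkeeping is \emph{uniqueness} of the pair $(\mathbf D,\sigma)$, which is where I expect the main (mild) obstacle to lie: the cyclic sequence is unique only up to shift, and I must check that the two normalisations — fixing $j_1=1$ for $\sigma$, and reading $\mathbf D$ off the distinguished component — remove all remaining ambiguity consistently. I would argue that $\sigma$ is an intrinsic invariant of $Y$ (it is the clockwise-read sequence of singular nodes, an honest invariant by Lemma~\ref{description-I_n}), and that once the starting index is fixed to $1$ the cyclic shift is determined, hence $\sigma$ is unique; then $\mathbf D$, being the multidegree of a component singled out by an intrinsic property of the nodes on its boundary, is forced. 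I would conclude by noting that different pairs $(\mathbf D,\sigma)$ yield genuinely different subschemes, since $\sigma$ and the multidegrees of the components are recoverable from $Y$, completing the proof.
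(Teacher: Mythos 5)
Your proposal is correct and takes essentially the same route as the paper: the paper proves the corollary precisely by combining Proposition~\ref{prop:I_n} (smoothability forces $\rho=1$, i.e.\ $m=n$) with Lemma~\ref{description-I_n}, reading the cyclic sequence $(j_1,\dots,j_n)$ as the $n$-cycle $\sigma=(j_1\,j_2\,\cdots\,j_n)$ with the shift freedom used to normalise $j_1=1$, and identifying $\mathbf D$ as the multidegree of the component of $Y$ containing both a sheaf singular at $P_1$ and a sheaf singular at $P_{\sigma^{-1}(1)}$. Your additional uniqueness bookkeeping (noting that $\sigma$ and the component multidegrees are recoverable from $Y$, e.g.\ $\mathbf D$ is determined by the unique node of $Y$ singular at $P_1$) is, if anything, slightly more explicit than what the paper records.
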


\begin{remark}
It is easy to check that a \fcj $\overline{\mathcal J}^d(X)$ contains the image of a translate of the Abel map
$X\rightarrow \Simp dX$, $p\mapsto I(p)\otimes M$ for some line bundle $M$ on $X$ of degree $d+1$, if and only if the  $n$-cycle corresponding to $\overline{\mathcal J}^d(X)$ via Corollary~\ref{cor:fcjs} is $\sigma=(1\,2\,\cdots\,n)$.
\end{remark}

We are ready to prove that all \fcjs of genus $1$ curves are defined by a polarisation.

\begin{proposition}
\label{singleispolarised}
Let $\overline{\mathcal J}^d(X)$ be a smoothable degree $d$ \fcj of  a necklace curve $X$ with $n\geq 2$ components. Then $\overline{\mathcal J}^d(X)=\overline{\mathcal J}_\phi^d(X)$ for the polarisation $\phi\in V^d(\Gamma)\cong\R^n$ given by
\begin{equation}\label{formulapol}
\phi:= \frac{1}{|S|}\sum_{\mathbf d\in S}\mathbf d 
\end{equation}
for $S = \left\{\mathbf d\in\Z^n\colon \overline{\mathcal J}^d(X) \text{ contains line bundles of multidegree } \mathbf d\right\}$.
\end{proposition}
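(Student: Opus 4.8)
The plan is to compare the two open subschemes of $\Simp dX$ by first matching the line bundles they contain through the stability inequality, and then upgrading this to an equality of schemes using properness and connectedness. By Corollary~\ref{cor:fcjs} I may write $Y=\overline{\mathcal J}^d(X)$ in the form~\eqref{description-Y} with $j_k=\sigma^{k-1}(1)$, so that the multidegrees of line bundles contained in $Y$ form the set $S=\{\mathbf D+\mathbf d_k:\ k=0,\dots,n-1\}$ (with $\mathbf d_0=\mathbf 0$, and $n$ distinct elements), and $\phi=\tfrac1n\sum_{\mathbf d\in S}\mathbf d$. The first observation is that for a necklace curve every proper subcurve is an arc $A=\{r,r+1,\dots,s\}\subsetneq\Z/n\Z$ of consecutive components, meeting its complement in exactly two nodes; hence for a line bundle of multidegree $\mathbf d$ (so $\delta=0$) the stability inequality~\eqref{Eqn: SymDefOfStability} reduces to $|s_A(\mathbf d)-s_A(\phi)|<1$ for every proper arc $A$, where I abbreviate $s_A(\mathbf e):=\sum_{i\in A}\mathbf e_i$.

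The computational heart is the evaluation of $s_A(\phi)$. Writing $\mathbf v_j:=\mathbf e_j-\mathbf e_{j+1}$, one checks that $s_A(\mathbf v_j)$ equals $+1$ for $j=s$, equals $-1$ for $j=r-1$, and vanishes otherwise, so that $s_A(\mathbf d_k)=\#\{\ell\leq k:\,j_\ell=s\}-\#\{\ell\leq k:\,j_\ell=r-1\}$. Since $\sigma$ is an $n$-cycle, each index occurs exactly once among $j_1,\dots,j_n$; say $j_a=s$ and $j_b=r-1$ with $a\neq b$. Then $s_A(\mathbf d_k)$ equals $1$ precisely for $a\leq k<b$ (if $a<b$) and $-1$ precisely for $b\leq k<a$ (if $b<a$), and summing over $k=0,\dots,n-1$ gives $\sum_k s_A(\mathbf d_k)=b-a$. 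Therefore $s_A(\phi)=s_A(\mathbf D)+\tfrac{b-a}{n}$ with $0<|b-a|\leq n-1$, so $s_A(\phi)\notin\Z$; by Remark~\ref{comparenodal} this means $\phi$ is nondegenerate, and hence $\overline{\mathcal J}^d_\phi(X)$ is a smoothable \fcj by Proposition~\ref{polarisedarefine}. The same formula shows that each $\mathbf d=\mathbf D+\mathbf d_{k_0}\in S$ is $\phi$-stable: for every proper arc $A$ the quantity $s_A(\mathbf d)-s_A(\phi)=s_A(\mathbf d_{k_0})-\tfrac{b-a}{n}$ lies in $(-1,1)$, as one sees by inspecting the two sign cases.

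It remains to check that the \emph{boundary nodes} of $Y$ are $\phi$-stable. Each such node is a sheaf $N_{j_k}^{\mathbf e}$ singular at $P_{j_k}$, with $\mathbf e:=\mathbf D+\mathbf d'_k$, lying in the closures of the two adjacent components $\mathcal J^{\mathbf D+\mathbf d_{k-1}}(X)$ and $\mathcal J^{\mathbf D+\mathbf d_k}(X)$; their multidegrees are $\mathbf e+\mathbf e_{j_k+1}$ and $\mathbf e+\mathbf e_{j_k}$, both elements of $S$ and so $\phi$-stable by the previous step. A direct inspection of~\eqref{Eqn: SymDefOfStability} in the case $\delta=1$ — the only new feature being the arcs one of whose two boundary nodes is $P_{j_k}$, where the right-hand side becomes $\tfrac12$ and the condition becomes one-sided — shows that, for nondegenerate $\phi$, the node $N_{j_k}^{\mathbf e}$ is $\phi$-stable exactly when both neighbouring line bundles are; hence it is $\phi$-stable here. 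I expect this node computation to be the most delicate bookkeeping step; an alternative that sidesteps it is to note that $\overline{\mathcal J}^d_\phi(X)$ is a smoothable \fcj, hence by Proposition~\ref{prop:I_n} an $I_n$ curve with exactly $n$ line-bundle components, so its $n$ $\phi$-stable multidegrees must be precisely $S$, and then to invoke that a smoothable \fcj is determined by its set of line-bundle multidegrees.

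Finally, combining these facts, every point of $Y$ (line bundles and nodes alike) is $\phi$-stable, so the inclusion of $Y$ into $\Simp dX$ factors as an open immersion $Y\hookrightarrow\overline{\mathcal J}^d_\phi(X)$. Since $Y$ is proper its image is closed, and since $\overline{\mathcal J}^d_\phi(X)$ is connected (being a \fcj) this image, being open, closed and nonempty, is all of $\overline{\mathcal J}^d_\phi(X)$. Therefore $\overline{\mathcal J}^d(X)=\overline{\mathcal J}^d_\phi(X)$, as desired. The main obstacle throughout is really the arc-reduction together with the clean evaluation $\sum_k s_A(\mathbf d_k)=b-a$, which simultaneously delivers nondegeneracy of $\phi$ and the $\phi$-stability of exactly the multidegrees in $S$; the node-stability in the third paragraph is the remaining technical point.
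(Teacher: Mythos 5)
Your proposal is correct and takes essentially the same approach as the paper's own proof: both start from the description of $\overline{\mathcal J}^d(X)$ in Corollary~\ref{cor:fcjs}, define the same polarisation $\phi$ (the average of the line-bundle multidegrees, as in \eqref{formulapol}), verify the arc-by-arc stability inequalities for the line bundles in $\overline{\mathcal J}^d(X)$, and conclude by properness and connectedness that the two open subschemes of $\Simp dX$ coincide. The only difference is one of detail rather than of method: you carry out explicitly the computation that the paper labels ``easy to check'' (the evaluation $s_A(\phi)=s_A(\mathbf D)+\tfrac{b-a}{n}$, yielding nondegeneracy and stability simultaneously) and you address the $\phi$-stability of the boundary nodes, which the paper subsumes into the phrase ``by properness''; your fallback argument via Proposition~\ref{prop:I_n} for that step is exactly the paper's implicit reasoning.
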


\begin{remark}
Formula~\eqref{formulapol} is also valid when $X$ is an arbitrary nodal  curve of genus~$1$ (i.e. it is not necessarily a necklace curve), because the degree of the line bundles in $\overline{\mathcal J}^d(X)$ is constant on the rational tails. By \cite[Proposition~7.4]{meravi}, the polarisation in \eqref{formulapol} is the only polarisation $\phi$ defining $\overline{\mathcal J}^d(X)$ such that $n\phi\in\Z^n$.
\end{remark}

\begin{proof}%
If $X$ is a necklace curve with $n\geq 2$ components,
then $\overline{\mathcal J}^d(X)$ can be described as in  Corollary~\ref{cor:fcjs}. Then it is easy to check that if we define 
$\phi\in\R^n$ by 
\[\phi:=\mathbf D + \frac 1n\sum_{i=1}^n \mathbf d_i,\]
we have that $\overline{\mathcal J}^d(X)$ contains a line bundle of multidegree $(d_1,\dots,d_n)$ if and only if 
\[
0 < \left| \sum_{i=r}^s (\phi_i-d_i)\right| < 1
\]
holds for all $1\leq r\leq s\leq n$.
This implies that the condition~\eqref{Eqn: SymDefOfStability} for $\phi$-stability is satisfied for all line bundles in $\overline{\mathcal J}^d(X)$, so that by properness we have $\overline{\mathcal J}^d(X)=\overline{\mathcal J}_\phi^d(X)$. 
\end{proof}

In Table~\ref{t:4} we illustrate the correspondence between $n$-cycles, smoothable \fcjs and the polarisation $\phi$ in Proposition~\ref{singleispolarised} in the case of total degree $d=0$, number of nodes of the necklace curve $n=4$. We  assume there that the \fcj contains the generalised Jacobian --- the moduli space of line bundles of multidegree $\mathbf D = (0,0,0,0)$. This  can always be achieved up to translation by a line bundle of total degree $0$ on the necklace curve.

\begin{table}
\caption{Smoothable fine compactified Jacobians of a curve of type~$I_4$ \label{t:4}}
\scalebox{0.8}{$
\begin{array}{cccccc}
\text{permutation}&\mathbf d_1&\mathbf d_2&\mathbf d_3&\mathbf d_4&\phi\\\hline
\\[-0.5em]
(1\;2\;3\;4)&(1,-1,0,0)&(1,0,-1,0)&(1,0,0,-1)&(0,0,0,0)&\left(\frac 34,-\frac 14,-\frac 14,-\frac14\right)
\\[0.5em]
(1\;2\;4\;3)&(1,-1,0,0)&(1,0,-1,0)&(0,0,-1,1)&(0,0,0,0)&\left(\frac 12,-\frac 14,-\frac 14,\frac14\right)
\\[0.5em]
(1\;3\;2\;4)&(1,-1,0,0)&(1,-1,1,-1)&(1,0,0,-1)&(0,0,0,0)&\left(\frac 34,-\frac 12,\frac 14,-\frac12\right)
\\[0.5em]
(1\;3\;4\;2)&(1,-1,0,0)&(1,-1,1,-1)&(0,-1,1,0)&(0,0,0,0)&\left(\frac 12,-\frac 34,-\frac 12,-\frac14\right)
\\[0.5em]
(1\;4\;2\;3)&(1,-1,0,0)&(0,-1,0,1)&(0,0,-1,1)&(0,0,0,0)&\left(\frac 14,-\frac 12,-\frac 14,\frac12\right)
\\[0.5em]
(1\;4\;3\;2)&(1,-1,0,0)&(0,-1,0,1)&(0,-1,1,0)&(0,0,0,0)&\left(\frac 14,-\frac 34,\frac 14,\frac14\right)
\end{array}
$}
\end{table}

\smallskip

The combinatorial characterisation in Proposition~\ref{f-for-fcjs} comes from the study of the stability cell containing $\phi$. 

\begin{proof}[Proof of Proposition~\ref{f-for-fcjs}]
We start by showing that every \fcj{} $\overline{\mathcal J}^d(X)$ on a necklace curve $X$ with $n$ components arises from a function $f\colon\mathcal C_{n-1} \rightarrow \Z$ that satisfies Condition~\ref{condfunction}. Let us recall that we have $\overline{\mathcal J}^d(X)=\overline{\mathcal J}^d_\phi(X)$ for the stability condition $\phi$ given by the average of the multidegrees of line bundles in $\overline{\mathcal J}^d(X)$, as described in Proposition~\ref{singleispolarised}.
Consider the stability cell containing $\phi$.
After identifying $V^d(\Gamma(X))$ with $\R^{n-1}$ by forgetting the last component of $\phi$, 
the stability cell containing $(\phi_1,\dots,\phi_{n-1})$ is defined by the inequalities

\begin{equation}\label{eq-fandq}
c_{r,s} < \sum_{i=r}^s q_i < c_{r,s}+1
    \end{equation}
where the integers $c_{r,s}$ 
correspond to the values $f_{\overline{\mathcal J}^d(X)}\left(\{r,r+1,\dots,s\}\right)$ of the following function:
\[
\begin{array}{rrcl}
f_{\overline{\mathcal J}^d(X)}\colon&\mathcal C_{n-1}&\longrightarrow&\Z\\
&I&\longmapsto&\min\left\{\sum_{i\in I}d_i:\; \mathcal J^{(d_1,\dots,d_n)}(X)\subset \overline{\mathcal J}^d(X)\right\}.
\end{array}
\]
 By using the explicit characterisation in Corollary~\ref{cor:fcjs}, one can check that $f_{\overline{\mathcal J}^d(X)}$ satisfies Condition~\eqref{condfunction} (mild superadditivity). 

Furthermore, by induction on $n$ it is possible to show  that 
there exist exactly $(n-1)!$
functions $f\colon\mathcal C_{n-1}\rightarrow \Z$ that satisfy \eqref{condfunction} with prescribed values $f(\{1\}),\dots,f(\{n-1\})$.  Since by Corollary~\ref{cor:fcjs} there exist exactly $(n-1)!$ smoothable \fcjs{} of degree $d$ up to translation, this yields that every~$f$ must give rise to a \fcj.
\end{proof}

\section{A stratification of genus \texorpdfstring{$1$}{1} fine compactified universal Jacobians}\label{s:strataofJmb}

In this section we fix $n\geq 1$ and $d \in \mathbb{Z}$, and study the topology of each genus~$1$  fine compactified universal Jacobian $\Jmb d1n$. The main result is Lemma~\ref{stratawise}, where we stratify each such \fcuj into strata that are isomorphic to strata of $\Cmb 1n$, the universal curve over $\Mmb 1n$.

Let us recall from Corollary~\ref{forgettails} that 
the Jacobian of a stable curve of genus~$1$ is the same if we remove its rational tails. 
This leads us to introduce the following substack of $\Mmb 1n$.
\begin{definition} 
  We denote by $\NR n$ the constructible substack of $\Mmb 1n$ consisting of necklace curves (defined in~\ref{necklace}).
  \end{definition}
  (The superscript in $\NR n$ is motivated by the fact that among the stable curves of genus~$1$, necklace curves are precisely those that do not have any rational tails).

Any stable  $n$-pointed  curve $X$ of genus~$1$ can be obtained by attaching rational tails 
 to the marked points of a necklace curve $X'$. This corresponds to the fact that $\Mmb 1n$ can be stratified as the disjoint union of constructible subsets
\begin{equation}\label{stratum-mb1n}
\NR k \times \prod_{j=1}^k\Mmb 0{I_j\cup\{*\}} 
\end{equation}
for any partition of $\{1,\dots,n\}$ into nonempty subsets $I_1,\dots,I_k$. We can avoid repetitions in the stratification \eqref{stratum-mb1n} by stipulating an ordering convention %
on the subsets $I_1,\dots,I_k$, for example $\max I_1 < \max I_2<\cdots <\max I_k$.
We also adopt the convention that $\Mmb 0{\{m,*\}}$ is a point, so that the case where one of the $I_j$ is the singleton $\{m\}$ corresponds to the case where the $m$th marked point lies on the necklace curve $X'$.
Let us recall that for each choice of $(k;I_1,\dots,I_k)$, the corresponding constructible subset \eqref{stratum-mb1n} is a union of strata of the stratification by topological type.

The main result of this section is a decomposition of each genus~$1$ \fcuj into easier pieces:
\begin{lemma} \label{stratawise}
  Let $\Jmb d 1n\rightarrow \Mmb 1n$ be a \fcuj and let $\mathcal S$ be a stratum of the stratification by topological type of $\Mmb 1n$ consisting of %
a necklace curve with $k$ maximal rational rails attached to it,
  i.e. $\mathcal S \subset \NR k \times \prod_{j=1}^k\Mmb 0{I_j\cup\{*\}}$.
  Then the restriction $\Jmb d 1n|_\mathcal S$ of $\Jmb d 1n$ to $\mathcal S$ is isomorphic to the pull-back of the universal family $\CNR k\rightarrow \NR k$ under the forgetful map $\mathcal S\rightarrow \NR k$.
\end{lemma}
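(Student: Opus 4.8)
The plan is to first peel off the rational tails, reducing to a family of necklace curves, and then to promote the fibrewise isomorphism of Proposition~\ref{prop:I_n} to an isomorphism of families over $\mathcal{S}$.

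First I would strip the tails. Over $\mathcal{S}$ the universal curve is obtained from the family $\mathcal{X}'/\mathcal{S}$ of necklace subcurves by attaching, along $k$ sections through the smooth locus, the $k$ families of maximal rational tails carrying the marks $I_1,\dots,I_k$; each attaching node is separating and each tail is a genus-$0$ curve, whose fine compactified Jacobian is a point by Corollary~\ref{genus0}. Iterating Lemma~\ref{separatingnode} once per tail, restriction of sheaves to the necklace subcurve identifies $\Jmb d1n|_{\mathcal{S}}$ with $\overline{\mathcal{J}}^{d'}(\mathcal{X}'/\mathcal{S})$ for the unique degree $d'=d-\sum_j d_j$, where $d_j$ is the (constant) total degree on the $j$-th tail; this is the relative form of Corollary~\ref{forgettails}. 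Since $\mathcal{X}'/\mathcal{S}$ is visibly the pullback of $\CNR k\to\NR k$ along the forgetful map $\mathcal{S}\to\NR k$, Lemma~\ref{product} together with Remark~\ref{generalprop} shows that $\overline{\mathcal{J}}^{d'}(\mathcal{X}'/\mathcal{S})$ is itself pulled back from $\NR k$. Thus everything reduces to producing an isomorphism $\overline{\mathcal{J}}^{d'}(\mathcal{X}'/\mathcal{S})\cong\mathcal{X}'/\mathcal{S}$ over $\mathcal{S}$.

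Fibrewise such an isomorphism exists by Proposition~\ref{prop:I_n}, the fibres being smoothable because they are cut out of a fine compactified universal Jacobian (Remark~\ref{generalprop}). When the necklace is irreducible — fibres of Kodaira type $I_0$ or $I_1$ — the relative Abel map of Lemma~\ref{irred1} gives the isomorphism directly and algebraically, using one of the distinguished sections. The substantial case is that of reducible necklaces with $m\geq 2$ components. Here I would exploit the relative generalised Jacobian $G:=\mathcal{J}^{\mathbf 0}(\mathcal{X}'/\mathcal{S})$, a relative form of $\mathbb{G}_m$ by Proposition~\ref{simple:I_n}, which acts on $\overline{\mathcal{J}}^{d'}(\mathcal{X}'/\mathcal{S})$ by tensoring and on $\mathcal{X}'/\mathcal{S}$ by translation. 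In both families the relative smooth locus is a disjoint union of $m$ components, each a $G$-torsor, glued cyclically at $m$ sections supported in the singular loci. Using the explicit description~\eqref{description-Y} — with the $m$-cycle $\sigma$ and multidegree $\mathbf D$ furnished by Corollary~\ref{cor:fcjs}, which are locally constant on $\mathcal{S}$ — I would match the $\ell$-th smooth component of $\mathcal{X}'/\mathcal{S}$ with the $\ell$-th smooth component of $\overline{\mathcal{J}}^{d'}(\mathcal{X}'/\mathcal{S})$ in the respective cyclic orders, define $G$-equivariant torsor isomorphisms sending chosen sections to chosen sections, and glue them across the nodes into a global $G$-equivariant morphism; being a fibrewise isomorphism by Proposition~\ref{prop:I_n}, such a morphism of proper flat $\mathcal{S}$-schemes is automatically an isomorphism.

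The hard part will be the gluing: the component-wise torsor isomorphisms must be chosen so that the local identifications at the $m$ nodes are mutually compatible all the way around the cycle, i.e. so that the monodromy (or period) accumulated along the cyclic chain agrees between $\mathcal{X}'$ and $\overline{\mathcal{J}}^{d'}(\mathcal{X}'/\mathcal{S})$, uniformly over $\mathcal{S}$ and for every combinatorial type $\sigma$. For a single curve this is precisely the content extracted from Kodaira's classification in the proof of Proposition~\ref{prop:I_n}. To secure it in families I would spread out over smoothings: restrict along test arcs $T=\spec\C[\![t]\!]\to\mathcal{S}$, invoke the nonsingularity of the total space of $\overline{\mathcal{J}}^{d'}(\mathcal{X}'_T/T)$ — guaranteed, as in the proof of Proposition~\ref{prop:I_n}, by the fact that each fibre is singular at a single node (Proposition~\ref{simple:I_n}) — together with Kodaira's theorem to match central fibres arc by arc, and then verify that the resulting matching is algebraic and independent of the chosen smoothing, so that the $G$-equivariant morphism above is forced to be an isomorphism over all of $\mathcal{S}$. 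Checking this independence and algebraicity, rather than mere fibrewise agreement, is the crux of the argument.
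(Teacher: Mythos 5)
Your first half matches the paper: stripping the rational tails with Corollary~\ref{genus0} and Lemma~\ref{separatingnode} to reduce to an isomorphism $U_{\mathcal N}/\mathcal N \to \overline{\mathcal{J}}^{d'}(U_{\mathcal N}/\mathcal N)$, and handling the irreducible case by the relative Abel map of Lemma~\ref{irred1}, is exactly what the paper does. The problem is the reducible case, and you have named it yourself: the compatibility of your component-wise torsor isomorphisms around the cycle, uniformly over $\mathcal S$, is left as ``the crux'' and never established. This is a genuine gap, not a routine verification. The obstruction is real: a family of curves of type $I_m$ all of whose fibres are abstractly isomorphic need \emph{not} be isomorphic to a product family, because $\Aut$ of an $I_m$ curve contains $\mathbb{G}_m$, so such families can be twisted by nontrivial $\mathbb{G}_m$-torsors (line bundles) on the base. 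Consequently, fibrewise matching via arcs and Kodaira's classification --- which is all your smoothing strategy produces --- cannot by itself yield a global algebraic isomorphism over $\mathcal S$; some input must kill the possible twisting, and your sketch supplies none. A second, related problem is that your dichotomy (irreducible versus $m\geq 2$ components) lumps the two-component necklaces into the torsor-gluing argument, but strata of two-component necklace curves can contain curves with nontrivial automorphisms (e.g.\ the curves of type $\Gamma_i$), so over such strata the universal curve is not even a product and your ``choose sections and match cyclically'' prescription must additionally be made equivariant, which you do not address.

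The paper's proof is structured precisely to avoid this. For necklaces with $r\geq 3$ components, the pointed curve $(X',q_1,\dots,q_r)$ has no nontrivial automorphisms, so the sections rigidify $U_{\mathcal N}\to\mathcal N$ into a literal product $X'\times\mathcal N$; Lemma~\ref{product} then forces \emph{any} fine compactified Jacobian of this product family to be of the form $\overline{\mathcal{J}}^{d'}(X')\times\mathcal N$, so all twisting is excluded and the statement reduces to the single-curve Proposition~\ref{prop:I_n}. For $r=2$, where rigidity fails and stacky points occur, the paper instead writes down a canonical Abel map $p\mapsto \mathcal{O}_{X'}(d_1q_1+d_2q_2)\otimes I(p)$, which, being canonical, automatically works in families and over points with automorphisms. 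If you want to salvage your approach, the missing ingredient is exactly this pair of observations; without them the family-level gluing you defer cannot be carried out.
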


Our proof will depend upon the number of the number of components of the necklace curve of each stratum. When the number of components is at most $2$, our isomorphism will be given by an Abel map. In the other cases the isomorphism is obtained from Proposition~\ref{prop:I_n} and it is for that reason noncanonical, even after ordering  the components of the necklace curve. %
\begin{proof}%
Let $\mathcal S \subset \NR k \times \prod_{j=1}^k\Mmb 0{I_j\cup\{*\}}$ be a stratum of $\Mmb 1n$ corresponding to a fixed topological type.
 If we denote by $\mathcal N\subset \NR k$ the stratum corresponding to the topological type of the necklace subcurve and by $\mathcal T_j\subset \Mmb 0{I_j\cup\{*\}}$ for $j=1,\dots,k$ the strata corresponding to the topological type of the rational tails, we have 
\[\mathcal S \cong \mathcal N \times \prod_{j=1}^k\mathcal T_j.\]

Since the fine compactified Jacobian of a rational tail is a point,  the fibre of $\Jmb d1n|_\mathcal S \rightarrow \mathcal S$ at some $[X,p_1,\dots,p_n]$ is isomorphic to the \fcj, for some degree $d'$, of the unique necklace subcurve $X'\subseteq X$. By Lemma~\ref{separatingnode} this description extends to the whole family $\Jmb d1n|_\mathcal S$, so that there is an isomorphism 
\[\Jmb d1n|_\mathcal S \cong \overline{\mathcal{J}}^{d'}(U_\mathcal N/\mathcal N)\times \prod_j\mathcal T_j\]
for some $d' \in \mathbb{Z}$ and some \fcj{} $\overline {\mathcal{J}} ^{d'}(U_\mathcal N/\mathcal N)$ for the restriction $U_\mathcal N\rightarrow \mathcal N$ of the universal family $\Cmb 1k / \Mmb 1k$ to the stratum $\mathcal N$. To conclude we need to exhibit an isomorphism $U_\mathcal N/ \mathcal N \to \overline {\mathcal{J}} ^{d'}(U_\mathcal N/\mathcal N)$.

From now on we let $r$ be the number of irreducible components (necessarily rational) of $X'$. To simplify the notation, we will relabel the marked points on $X'$ and call them $q_1, \ldots, q_k$, and we will assume that the first $r$ of them lie on different components of $X'$. 

If the necklace subcurve $X'$ is irreducible, then an isomorphism $U_\mathcal N/ \mathcal N \to \overline {\mathcal{J}} ^{d'}(U_\mathcal N/\mathcal N)$ is given by an Abel map as shown in Lemma~\ref{irred1}.

The case $r=2$ is settled in a similar manner. A direct analysis shows that for each fine compactified Jacobian $\overline{\mathcal{J}}^{d'}(X')$ there exist integers $d_1,d_2$ with $d_1+d_2=d'+1$, such that the Abel map \[p \mapsto \mathcal{O}_{X'}(d_1 q_1 + d_2 q_2)\otimes I(p)\] induces an isomorphism $X' \to \overline{\mathcal{J}}^{d'}(X')$.

We still have to deal with the case when $X'$ consists of $r \geq 3$ irreducible components. In this case the pointed curve $(X', q_1, \ldots, q_r)$ has no nontrivial automorphisms, so each fibre of $U_{\mathcal{N}} \to \mathcal{N}$ admits a unique isomorphism to $(X', q_1, \ldots, q_r)$ that respects the sections,  and this exhibits $U_{\mathcal{N}} \to \mathcal{N}$ as the trivial family.  By Lemma~\ref{product} the family $\overline{\mathcal{J}} ^{d'}(U_\mathcal N/\mathcal N)$ is also trivial and given by the product of $\mathcal N$ and some smoothable \fcj $\overline{\mathcal{J}}^{d'}(X')$ of $X'$. 
The proof is then concluded by observing that, by Proposition~\ref{prop:I_n}, the fine compactified Jacobian $\overline{\mathcal{J}}^{d'}(X')$ is isomorphic to $X'$.\end{proof}

Lemma~\ref{stratawise} allows to introduce a  noncanonical refinement of the stratification of each \fcuj $\Jmb d1n$ by topological type of the underlying moduli space of stable pointed curves. %

\begin{corollary} \label{strata}
Each \fcuj $\Jmb d1n$  can be  stratified, using the isomorphisms of Lemma~\ref{stratawise}, into a refinement of the inverse image of the topological type strata of $\Mmb 1n$ under the forgetful map. Each stratum of $\Jmb d1n$ corresponds to a stratum of $\Cmb 1n\cong \Mmb 1{n+1}$ given by curves $(X,p_1,
\dots,p_{n+1})$ such that the stabilisation of $(X,p_1,\dots,p_{n})$ has a fixed topological type, and the point $p_{n+1}$  lies on either 
\begin{itemize}
    \item[(a)]  an irreducible component of $X$ that is not contained in any rational tail, or
    \item[(b)]  
    an irreducible and maximal rational tail of $X$ containing only one other marked point $p_j$, or
\item[(c)] one of the maximal rational tails, on the  unique component of that rational tail that intersects the necklace subcurve.
\end{itemize} 
\end{corollary}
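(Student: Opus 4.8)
The plan is to build the stratification one base-stratum at a time, using the decomposition~\eqref{stratum-mb1n} together with Lemma~\ref{stratawise}. Fix a stratum $\mathcal S\subset\Mmb 1n$ of the topological type stratification; by \eqref{stratum-mb1n} it sits inside $\NR k\times\prod_{j=1}^k\Mmb 0{I_j\cup\{*\}}$ for a partition $I_1,\dots,I_k$ of $\{1,\dots,n\}$, and is determined by the topological type of the necklace subcurve together with those of the tails. Lemma~\ref{stratawise} identifies $\Jmb d1n|_{\mathcal S}$ with the pull-back of the universal necklace curve $\CNR k\to\NR k$ along $\mathcal S\to\NR k$, so a point of $\Jmb d1n|_{\mathcal S}$ is a pair consisting of a point of $\mathcal S$, encoding a stable curve $X$ with necklace subcurve $X'$, together with a point $y$ on $X'$. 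The tautological stratification of the universal necklace curve by the position of $y$ on the fibre---on the smooth locus of a component, at a marked point $q_j$, or at a node $P_i$ of the necklace---pulls back to a stratification of $\Jmb d1n|_{\mathcal S}$, and letting $\mathcal S$ vary produces the asserted refinement of the inverse image of the topological type strata.

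Next I would identify each such stratum with a stratum of $\Cmb 1n\cong\Mmb 1{n+1}$. Composing the isomorphism of Lemma~\ref{stratawise} with the fibrewise inclusion $X'\hookrightarrow X$ identifies $\Jmb d1n|_{\mathcal S}$ with the locus of $\Cmb 1n$ where the universal point lies on the necklace subcurve. Under $\Cmb 1n\cong\Mmb 1{n+1}$ the universal point becomes $p_{n+1}$, obtained by stabilising $(X,p_1,\dots,p_n,y)$, so that forgetting $p_{n+1}$ recovers the base stratum $\mathcal S$. It then remains to read off, in each of the three positions of $y$, where $p_{n+1}$ lands after the prescribed bubbling: a generic point of a component of $X'$ leaves $p_{n+1}$ on a component of $X$ contained in no tail, and $y$ at a nonseparating node $P_i$ of the necklace sprouts a $\Pp1$ internal to the necklace carrying $p_{n+1}$, both giving case~(a); $y$ at a marked point $q_j$ with $|I_j|=1$ sprouts an irreducible rational tail carrying $p_{n+1}$ together with the single marking $p_{I_j}$, giving case~(b); and $y$ at an attaching point $q_j$ with $|I_j|\geq 2$, which is a separating node of $X$, sprouts a $\Pp1$ between $X'$ and the tail, placing $p_{n+1}$ on the unique component of the enlarged tail meeting the necklace, giving case~(c).

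Finally I would check that this assignment is a bijection between the refined strata of $\Jmb d1n$ and the strata of $\Mmb 1{n+1}$ of the three listed types. Injectivity is clear, since distinct base strata $\mathcal S$ have distinct images under the forgetful map and, for fixed $\mathcal S$, distinct positions of $y$ yield distinct topological types of $(X,p_1,\dots,p_{n+1})$; surjectivity follows by running the construction backwards, forgetting $p_{n+1}$ and stabilising to recover $\mathcal S$ and then reading the position of $p_{n+1}$ off the necklace. The main point to be careful about is the interplay between the bubbling in $\Cmb 1n\cong\Mmb 1{n+1}$ and the necklace-versus-tail structure---in particular verifying that cases (a),(b),(c) exhaust exactly the positions reachable by a point of $X'$, and that no stratum with $p_{n+1}$ in the interior of a nontrivial tail occurs, precisely because the Jacobian point never leaves the necklace subcurve. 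I would also note that the identification $\overline{\mathcal J}^{d'}(X')\cong X'$ underlying Lemma~\ref{stratawise} is noncanonical by Proposition~\ref{prop:I_n}, so the stratification itself is noncanonical; this does not affect the combinatorial correspondence of strata, which is all the statement asserts.
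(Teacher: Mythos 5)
Your proposal is correct and follows essentially the same route as the paper: the paper's proof consists of the single observation that the stratification is the one by topological type of $\Mmb 1{n+1}$, induced on each restriction $\Jmb d1n|_{\mathcal S}$ via the isomorphisms of Lemma~\ref{stratawise}, which is exactly your construction. Your case analysis of the sprouting (generic point or node of the necklace giving (a), a marked point with $|I_j|=1$ giving (b), an attaching node of a nontrivial tail giving (c)) simply makes explicit the verification that the paper leaves to the reader.
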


\begin{proof}
This is the stratification by topological type of $\Mmb 1{n+1}$, induced on $\Jmb d1n$ via the isomorphisms of Lemma~\ref{stratawise} on each restriction  $\Jmb d1n|_{\mathcal{S}}$, for  $\mathcal{S}$ a topological type stratum of $\Mmb 1n$. %
\end{proof}

To conclude, we observe that the above stratification is a noncanonical refinement of a canonical one.

\begin{remark} If $X$ is a necklace curve, then we have proved in \ref{prop:I_n} that every smoothable fine compactified Jacobian $\overline{\mathcal{J}}^d(X)$ is isomorphic to $X$. The curve $X$ can be naturally stratified by its irreducible components and singular locus. Whilst the isomorphism $X \cong \overline{\mathcal{J}}^d(X)$ is in general noncanonical, the induced stratification of $\overline{\mathcal{J}}^d(X)$ is canonical. 

If $[X, p_1, \ldots p_n] \in \Mmb 1n$, then the stratification given in Corollary~\ref{strata} induces a stratification of $\overline{\mathcal{J}}^d(X)$ that noncanonically refines the one described in the previous paragraph. The stratification from Corollary~\ref{strata} contains $n$ additional strata of type (b). These additional strata depend upon the  choice of an isomorphism $X \cong \overline{\mathcal{J}}^d(X)$, and that choice is noncanonical whenever $n \geq 3$. 
\end{remark}

\section{Cohomology of \texorpdfstring{$\Jmb d1n$}{fine compactified universal Jacobians}}

In this section we fix $n \geq 1$ and $d \in \mathbb{Z}$ and use the results of Section~\ref{s:strataofJmb} to calculate the rational cohomology of every genus~$1$ fine compactified universal Jacobian $\Jmb{d}{1}{n}$. As expected, the result only depends on $n$ --- it is independent of $d$ and of the particular fine compactified Jacobian. This is analogous to the main result of \cite{mishvi}, which states that the cohomology of
\emph{polarised} fine compactified Jacobians of a single curve of any genus is independent of the polarisation.

Our main tool will be Lemma~\ref{stratawise}. An important role will be played by the fact that every \fcuj is \emph{nonsingular} as a Deligne--Mumford stack, a fact that follows from the fact that it is open in the nonsingular moduli stack $\operatorname{Simp}^{d}(\Cmb{g}{n} / \Mmb gn)$.

We start by explaining how the even cohomology of genus~$1$ fine compactified universal Jacobians admits a geometric interpretation with the strata of Corollary~\ref{strata}.  

\begin{corollary}\label{boundary}
The classes of the cycles of Corollary~\ref{strata} span  the even cohomology of $\Jmb d{1}{n}$. 
In particular, the even cohomology of $\Jmb d{1}{n}$ is all algebraic.
\end{corollary}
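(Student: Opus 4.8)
The plan is to deduce the statement from Petersen's analysis of $\Mmb{1}{n+1}$ via the stratification of Corollary~\ref{strata}. Recall first that $\Jmb{d}{1}{n}$ is nonsingular and proper as a Deligne--Mumford stack, so each cohomology group $H^k(\Jmb{d}{1}{n})$ carries a pure Hodge structure of weight $k$. I would study this cohomology through the spectral sequence associated with the stratification of Corollary~\ref{strata} (equivalently, through the successive Gysin long exact sequences of the open--closed decompositions that refine it), whose first page is assembled from the cohomology of the strata and which converges to $H^*(\Jmb{d}{1}{n})$. Every map in this spectral sequence is a morphism of mixed Hodge structures, and the Gysin pushforwards shift the cohomological degree by twice a codimension, hence by an even amount; consequently the spectral sequence respects the parity of the total degree, and $H^{\mathrm{even}}(\Jmb{d}{1}{n})$ is built entirely out of the \emph{even}-degree cohomology of the strata.

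Next I would feed in Petersen's computation. By Corollary~\ref{strata} each stratum of $\Jmb{d}{1}{n}$ is isomorphic to a topological-type stratum of $\Cmb{1}{n}\cong\Mmb{1}{n+1}$, that is, to a product of moduli spaces $\Mm{0}{\bullet}$ together with at most one factor $\Mm{1}{\bullet}$ attached to a genus~$1$ vertex (the necklace-type strata have only genus~$0$ factors). By \cite{petersen} the even cohomology of such a stratum is of Tate type and spanned by algebraic classes, all the non-Tate contributions --- which arise only from the factor $\Mm{1}{\bullet}$, when present --- being concentrated in odd degree. Since Gysin pushforward preserves algebraicity, every class produced in $H^{\mathrm{even}}(\Jmb{d}{1}{n})$ is algebraic, and tracing the generators back through the pushforward maps identifies them with the fundamental classes of the closures of the strata of Corollary~\ref{strata} (and of algebraic cycles supported on them). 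This would give both assertions at once: the listed cycle classes span the even cohomology, and in particular that cohomology is algebraic.

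The main obstacle is the Hodge-theoretic bookkeeping needed to turn the parity observation into an exact statement, namely to check that the spectral sequence degenerates in the relevant range and that no non-Tate class can survive into even total degree. This is precisely a weight argument of the type used by Petersen for $\Mmb{1}{n}$ itself: purity of $H^k(\Jmb{d}{1}{n})$ (from smoothness and properness) forces the connecting homomorphisms --- which change parity --- to vanish on the relevant pure pieces, so that the even cohomology becomes an iterated extension of the even, Tate pieces of the strata. The only genuinely new input over Petersen's argument and over the polarised case treated in \cite{mishvi} is the identification of the strata of $\Jmb{d}{1}{n}$ with those of $\Mmb{1}{n+1}$ furnished by Lemma~\ref{stratawise} and Corollary~\ref{strata}; once that identification is in place the weight computation is formally identical, and I would simply transcribe it.
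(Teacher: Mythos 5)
Your overall route is the same as the paper's: stratify $\Jmb d1n$ via Corollary~\ref{strata}, use that $\Jmb d1n$ is nonsingular and proper as a Deligne--Mumford stack (hence Poincar\'e duality and purity of $H^k$), and import Petersen's Hodge-theoretic analysis of the strata through the identification of the strata of $\Jmb d1n$ with strata of $\Mmb 1{n+1}$. However, the input you attribute to \cite{petersen} is misstated, and the misstatement is not cosmetic: it is false, and the argument built on it does not close. You assert that ``the even cohomology of such a stratum is of Tate type and spanned by algebraic classes'', with all non-Tate classes in odd degree. The strata are \emph{open} varieties, and for these the assertion already fails in genus $0$: $H^{2k}(\Mm 0m)$ is pure of type $(2k,2k)$, i.e.\ of weight $4k$, whereas cycle classes of codimension-$k$ subvarieties lie in $W_{2k}H^{2k}$; hence for $k>0$ the nonzero group $H^{2k}(\Mm 0m)$ contains \emph{no} nonzero algebraic classes, and there is similarly no reason for non-Tate classes of the factors $\Mm 1k$ to avoid even degrees on the open strata. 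The fact that Petersen's analysis actually provides --- and the one the paper's proof quotes --- is that in the even cohomology of each stratum, the classes whose \emph{Hodge weight equals the degree} occur only in degree $0$.

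This corrected input is exactly what makes the argument work, and it is the crux rather than ``bookkeeping''. Since $H^{2k}(\Jmb d1n)$ is pure of weight $2k$ and morphisms of mixed Hodge structures are strict, the only pieces of strata cohomology that can survive into it in the spectral sequence of the stratification are those of weight equal to degree; by the concentration statement these are exactly the degree-$0$ classes, i.e.\ the fundamental classes of the strata, whose images are precisely the cycle classes of the closures of the strata of Corollary~\ref{strata}. Your version, which lets \emph{all} even-degree classes of the strata contribute, can conclude at best that $H^{\mathrm{even}}(\Jmb d1n)$ is spanned by pushforwards of strata classes --- this is why you were forced into the hedge ``(and of algebraic cycles supported on them)'' --- and since those strata classes are not algebraic, it does not even yield the algebraicity claim; nor does ``Tate by iterated extension'' yield algebraicity, which would be an instance of the Hodge conjecture. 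Once the key fact is restated correctly, the parity and strictness bookkeeping in your outline, together with Lemma~\ref{stratawise}, reproduces the paper's proof.
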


\begin{proof}
The proof is completely analogous to the proof that the even cohomology of $\Mmb 1n$ is generated by cycle classes of strata, given by Petersen in \cite[Section 1]{petersen}. In both cases, the claim follows from the fact that the total space --- in Petersen's case $\Mmb 1n$, in our case $\Jmb d1n$ --- is complete and nonsingular (as a Deligne--Mumford stack), so that its rational cohomology satisfies Poincar\'e duality, combined with a Hodge-theoretic analysis of the cohomology of the strata, ensuring that the only cohomology classes in the even cohomology of each stratum with Hodge weight equal to the degree are in degree $0$. Since the strata of $\Jmb d1n$ are isomorphic to certain strata of $\Mmb1{n+1}$, this Hodge-theoretic analysis applies to the strata of $\Jmb d1n$ as well.
\end{proof}

We now aim at calculating the cohomology of any \fcuj $\Jmb d 1n$, using Lemma~\ref{stratawise}. Let us recall that, by definition, the complex Deligne--Mumford stack $\Jmb d 1n$ is smooth and proper. In particular, it satisfies Poincar\'e duality and as a consequence, we have that the Hodge structures on $\coh k {\Jmb d 1n}$ are pure of Hodge weight equal to $k$. This means that we can recover the structure of graded $\Q$-vector space and the Hodge structures on $\coh\pu{\Jmb d 1n}$  from its Euler characteristic in the Grothendieck group $K_0(\HS_\Q)$ of rational Hodge structures. Let us recall that the $\s_n$-action on $\Jmb d 1n \rightarrow \Mmb 1n$ by permuting the $n$ marked points endows the cohomology of $\Jmb d1n$ with a structure of $\s_n$-representation. To keep track of this as well, we introduce the following definition.

\begin{definition}
  Let $X$ be a complex quasi-projective variety. We define the $\s_n$-equivariant Hodge Euler characteristic $\hodgechar nX$ of $X$ as the alternating sum of the class of the $i$th cohomology of compact support of $X$ in the Grothendieck group of rational Hodge structures with a compatible structure as $\s_n$-representation, that is
  \[
\hodgechar nX = \sum_{i\in\Z}(-1)^i [\cohc i X] \in K_0^{\s_n}(\HS_\Q)\cong \Lambda_n \otimes_\Q K_0(\HS_\Q),
\]
where $\Lambda_n$ is the space of symmetric functions of degree $n$, with rational coefficients.

If $X$ is a constructible subset with an $\s_n$-action, we can define its $\s_n$-equivariant Hodge Euler characteristic additively as
\[\hodgechar nX = \sum_{j\in J}\hodgechar n{X_j},\]
where $\{X_j\}_{j\in J}$ is any stratification of $X$ into $\s_n$-invariant quasi-projective varieties $X_j$.

If $\mathcal X$ is a constructible subset of a Deligne--Mumford stack with a quasi-projective coarse moduli space, then if there is a $\s_n$-action on $\mathcal X$ we can define the $\s_n$-equivariant Hodge Euler characteristic of $\mathcal X$ to be $\hodgechar nX$ where $X$ is the coarse moduli space of $\mathcal X$.

\end{definition}

Hodge Euler characteristics are often used to describe the cohomology of $\Mmb gn$. One of their advantages is that they allow to use operations on the ring
$\Lambda = \bigoplus_{n\geq 0}\Lambda_n$ of symmetric functions. For instance, this is the foundation of Getzler's formula for the cohomology of $\Mmb 1n$. Before we proceed, let us set up some notation for the generating functions of the $\s_n$-equivariant Hodge Euler characteristics of moduli spaces of curves in genus $g=0,1$ and for the locus of necklace curves in genus~$1$:
\begin{align*}
  \bfa_g &:= \sum_{n\geq 3-2g}\hodgechar n{\Mm gn},&
  \bfb_g &:= \sum_{n\geq 3-2g}\hodgechar n{\Mmb gn},\\
  \bfb_1^{\operatorname{NR}}&:= \sum_{n\geq 1}\hodgechar n{\NR n}.
\end{align*}

Let us recall Getzler's result on the cohomology of $\Mmb 1n$:
\begin{theorem}[{\cite[Theorem~(2.6)]{getzler-semiclassical}}, \cite{dan-semiclassical}] \label{thm-getzler}
In genus~$1$, the generating functions for the Hodge Euler characteristic of the moduli space of necklace curves and stable curves are given respectively by:
  \begin{align}
 \bfb_1^{\operatorname{NR}}&=
\bfa_1 - \frac 12\sum_{n\geq 1}\frac{\phi(n)}n \log(1-\psi_n(\bfa_0''))+\frac{\dot\bfa_0(1+\dot\bfa_0)+\frac14\psi_2(\bfa_0'')}{1-\psi_2(\bfa_0'')}, \label{getzler-1}
\\
    \bfb_1 &= \bfb_1^{\operatorname{NR}}\circ (p_1+\bfb_0'), \label{getzler-2}
    \end{align}
  where for a symmetric function $f$, we write $f'=\frac{\partial f}{\partial p_1}$ and
  $\dot f=\frac{\partial f}{\partial p_2}$ for its derivatives with respect to the power sums $p_1$ and $p_2$, respectively,
  and for all $k\geq 0$ we define the $k$th Adams operation by $\psi_k(f) := p_k \circ f$. %
\end{theorem}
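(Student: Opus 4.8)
The plan is to compute the two generating functions by geometric stratification and to read off each formula as a plethystic identity in the ring $\Lambda\otimes_\Q K_0(\HS_\Q)$. I would derive \eqref{getzler-2} first, since it is the formal shadow of the stratification \eqref{stratum-mb1n} already set up in the paper. Equivariant Hodge Euler characteristics are additive over $\mathfrak S_n$-invariant stratifications and multiplicative over products, so the class of $\Mmb 1n$ is the sum over all data $(k;I_1,\dots,I_k)$ of the class of $\NR k$ times, for each block $I_j$, the class of the rational tail $\Mmb 0{I_j\cup\{*\}}$ carrying the gluing marking $*$. Summing over $n$ and over all such data is precisely the plethystic substitution in which each of the $k$ markings of the necklace is replaced either by a bare marked point (the summand $p_1$) or by a rational tail with one distinguished node, recorded by the $p_1$-derivative $\bfb_0'$ (which marks and deletes the gluing point $*$). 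This yields $\bfb_1=\bfb_1^{\NRop}\circ(p_1+\bfb_0')$, with the singleton convention $\Mmb 0{\{m,*\}}$ being a single point accounted for by the $p_1$ term.

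For \eqref{getzler-1} I would stratify $\NR n$ by the Kodaira type $I_k$ of the necklace curve: $k=0$ (smooth, with moduli $\Mm1n$), $k=1$ (irreducible one-nodal), and $k\geq2$ (a cyclic chain of $k$ rational components). Additivity splits $\bfb_1^{\NRop}$ into three contributions, of which the $I_0$ piece is exactly $\bfa_1$. For $k\geq2$, each of the $k$ links is a rational component carrying its two nodes and some of the $n$ markings; distinguishing the two node-points is the second $p_1$-derivative $\bfa_0''$, and assembling $k$ such links into an unoriented cyclic chain modulo its dihedral automorphism group $D_k$ of order $2k$ is the ``bracelet'' plethysm. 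Averaging over $D_k$ splits into one half over the rotation subgroup plus one half over the reflections. The rotation half is the standard cyclic-necklace identity, which in symmetric functions reads $\sum_{k\geq1}\frac1k\sum_{d\mid k}\phi(d)\,\psi_d(B)^{k/d}=-\sum_{m\geq1}\frac{\phi(m)}{m}\log(1-\psi_m(B))$ (Burnside applied to $\Z/k\Z$, with a rotation having $c$ position-cycles contributing $\psi_{k/c}(B)^{c}$); taking the bead $B=\bfa_0''$ and the prefactor $\tfrac12$ from the dihedral quotient produces exactly the logarithmic term $-\tfrac12\sum_{m\geq1}\frac{\phi(m)}{m}\log(1-\psi_m(\bfa_0''))$.

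It remains to identify the reflection half of the dihedral average together with the $I_1$ stratum as the closed-form term $\frac{\dot\bfa_0(1+\dot\bfa_0)+\frac14\psi_2(\bfa_0'')}{1-\psi_2(\bfa_0'')}$. A reflection of the cyclic chain is an involution whose axis meets the chain in two places (two nodes, two components, or one of each) and swaps the remaining links in pairs. A swapped pair of links is recorded by $\psi_2$ of a single link, namely $\psi_2(\bfa_0'')$, so summing over the length of the symmetric half-chain gives the geometric series $\frac{1}{1-\psi_2(\bfa_0'')}$, while the two axis-ends supply the numerator. The $I_1$ stratum fits into the same picture: an irreducible one-nodal curve is a rational curve with a pair of points glued and swapped by the branch involution, hence is recorded by the $p_2$-derivative $\dot\bfa_0=\partial\bfa_0/\partial p_2$; this is the source of the $\dot\bfa_0(1+\dot\bfa_0)$ terms in the numerator.

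The main obstacle is exactly this reflection bookkeeping. One must correctly enumerate the fixed loci of the order-two symmetries of a cyclic chain of rational curves --- distinguishing whether the reflection axis passes through nodes or through components, handling the parity of $k$ where $D_k$ degenerates, and disentangling the overlap with the $I_1$ stratum --- and then translate each fixed locus into the correct $\psi_2$-twisted (that is, $p_2$-marked) symmetric-function contribution with the right $\tfrac12$ and $\tfrac14$ orbifold factors. Getting these small-$k$ and axis-type cases right, without over- or under-counting, is what produces the precise numerator $\dot\bfa_0(1+\dot\bfa_0)+\tfrac14\psi_2(\bfa_0'')$ and the denominator $1-\psi_2(\bfa_0'')$, and is the genuine content of the theorem.
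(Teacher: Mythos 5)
The first thing to say is that the paper does not prove this statement: Theorem~\ref{thm-getzler} is quoted from Getzler (\cite[Theorem~(2.6)]{getzler-semiclassical}), and the only part the paper ever justifies is the plethystic identity \eqref{getzler-2}, which is explained inside the proof of Theorem~\ref{main} exactly as you explain it (each marking of a necklace curve is replaced either by a bare point or by a rational tail with a distinguished gluing point, which is what $p_1+\bfb_0'$ records). So your proposal is not competing with an argument in the paper; it is a reconstruction of Getzler's own derivation, and it follows the standard route: stratify $\NR n$ by Kodaira type $I_k$, write the $I_k$ stratum as a quotient of a product of $k$ beads $\Mm 0{S\cup\{x,y\}}$ (recorded by $\bfa_0''$) by the dihedral group $D_k$ of order $2k$ (for $k=1$, by the branch involution at the node), and compute equivariant Hodge Euler characteristics of quotients by averaging traces over the group. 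Your rotation computation is right: a rotation with $\gcd$ equal to $c$ contributes $\psi_{k/c}(\bfa_0'')^{c}$, and $\sum_{k\ge 1}\frac{1}{2k}\sum_{d\mid k}\phi(d)\psi_d(\bfa_0'')^{k/d}$ resums to the logarithmic term of \eqref{getzler-1}.

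Two points need tightening. First, attributing $\dot\bfa_0(1+\dot\bfa_0)$ to the $I_1$ stratum is off: since your rotation sum already starts at $k=1$, the $I_1$ stratum contributes only the leading $\dot\bfa_0$ (its identity part $\frac12\bfa_0''$ sits inside the log term as the $d=m=1$ summand), while $\dot\bfa_0^2$ comes from reflections of \emph{even} cycles whose axis passes through two components. Your own ``axis-ends'' dictionary gives this correctly, so only the last sentence of your third paragraph misassigns it. Second, the bookkeeping you defer is exactly where the constants live, and it does close up, provided one remembers that with the convention $\dot f=\partial f/\partial p_2$ the twisted trace of the node swap on a fixed bead is $2\dot\bfa_0$, not $\dot\bfa_0$. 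Then: for odd $k$ there are $k$ conjugate reflections, each fixing one bead and one node, giving $\frac{1}{2k}\cdot k\cdot 2\dot\bfa_0\,\psi_2(\bfa_0'')^{(k-1)/2}=\dot\bfa_0\,\psi_2(\bfa_0'')^{(k-1)/2}$; for even $k$ there are $k/2$ reflections through two beads and $k/2$ through two nodes, giving $\frac{1}{2k}\bigl(\tfrac k2(2\dot\bfa_0)^2\psi_2(\bfa_0'')^{(k-2)/2}+\tfrac k2\psi_2(\bfa_0'')^{k/2}\bigr)=\dot\bfa_0^2\psi_2(\bfa_0'')^{(k-2)/2}+\tfrac14\psi_2(\bfa_0'')^{k/2}$. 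Summing these geometric series over $k$ yields precisely $\bigl(\dot\bfa_0(1+\dot\bfa_0)+\tfrac14\psi_2(\bfa_0'')\bigr)/\bigl(1-\psi_2(\bfa_0'')\bigr)$. With these corrections, and the standard fact that rational cohomology of a finite quotient is the invariant part (which legitimises the trace-averaging in $K_0^{\s_n}(\HS_\Q)$), your outline becomes a complete proof of \eqref{getzler-1}; note also that stability of the necklace curves is automatic in the formalism, since $\bfa_0''$ and $\dot\bfa_0$ have no constant term, so every bead carries at least one marking.
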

\begin{theorem} \label{main}
  Let $\Jmb d 1n$ be a \fcuj over $\Mmb 1n$ for all $n\geq 1$. Then the generating function for the $\s_n$-equivariant Hodge Euler characteristic of $\Jmb d 1n$ is given by
  \begin{equation}\label{mainformula}
\sum_{n\geq 1}\hodgechar n{\Jmb d 1n} =
\left((1+p_1)(\bfb_1^{\operatorname{NR}})'\right)\circ(p_1+\bfb_0'),\end{equation}
where $\bfb_1^{\operatorname{NR}}$ is given in \eqref{getzler-1}.
\end{theorem}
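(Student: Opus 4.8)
The plan is to compute the left-hand side additively and then repackage the sum over strata as an operation on symmetric functions. Since $\Jmb d1n$ is smooth and proper as a Deligne--Mumford stack, its rational cohomology is pure (the Hodge structure on $\coh k{\Jmb d1n}$ has weight $k$), so the graded vector space together with its Hodge structures and $\s_n$-action is recovered from the equivariant Hodge Euler characteristic $\hodgechar n{\Jmb d1n}$. It therefore suffices to determine the generating function $\sum_{n\geq 1}\hodgechar n{\Jmb d1n}$.

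First I would use the stratification \eqref{stratum-mb1n} of $\Mmb 1n$ by the constructible pieces $\NR k\times\prod_{j=1}^k\Mmb 0{I_j\cup\{*\}}$, indexed by partitions of $\{1,\dots,n\}$ into subsets $I_1,\dots,I_k$. By Lemma~\ref{stratawise}, over such a stratum $\mathcal S$ the fine compactified universal Jacobian restricts to the pull-back of the universal necklace curve $\CNR k\to\NR k$ along the projection $\mathcal S\to\NR k$, so that $\Jmb d1n|_{\mathcal S}\cong\CNR k\times\prod_{j=1}^k\Mmb 0{I_j\cup\{*\}}$. Summing the equivariant Hodge Euler characteristic over all such strata---equivalently, over all set-partitions of $\{1,\dots,n\}$---and using its additivity and its multiplicativity on products assembles into a plethystic substitution, in which each of the $k$ points of the necklace is filled either by a bare marked point (the summand $p_1$) or by a rational tail $\Mmb 0{I_j\cup\{*\}}$ with $|I_j|\geq 2$ (the summand $\bfb_0'$, the generating function of the universal genus $0$ curves with a distinguished attaching point). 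This yields
\[
\sum_{n\geq 1}\hodgechar n{\Jmb d1n}=\Big(\sum_{k\geq 1}\hodgechar k{\CNR k}\Big)\circ(p_1+\bfb_0').
\]
This is the very substitution $\circ(p_1+\bfb_0')$ already appearing in Getzler's Formula~\eqref{getzler-2}, now applied to the generating function of the universal necklace curves rather than to $\bfb_1^{\operatorname{NR}}$ itself.

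It then remains to prove that the outer factor equals $(1+p_1)(\bfb_1^{\operatorname{NR}})'$. For this I would stratify the universal curve $\CNR k$ by the position of the universal point, as in Corollary~\ref{strata}: it lies either (a)~on the necklace subcurve, or (b)~on a rational tail carrying exactly one further marked point. Stratum (a) identifies this locus with $\NR{k+1}$, endowed only with the residual $\s_k$-action fixing the universal point; since restricting the $\s_{k+1}$-action to this $\s_k$ corresponds to the operator $\partial/\partial p_1$, summing over $k$ contributes $(\bfb_1^{\operatorname{NR}})'$. Stratum (b) is a disjoint union over the choice of which of the $k$ points is paired with the universal point, each summand being isomorphic to $\NR k$; recognising ``tag one of the $k$ points and re-symmetrise'' as the composite $\operatorname{Ind}^{\s_k}_{\s_{k-1}}\!\circ\operatorname{Res}^{\s_k}_{\s_{k-1}}$, that is the operator $p_1\,\partial/\partial p_1$, these contribute $p_1(\bfb_1^{\operatorname{NR}})'$. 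Adding the two gives $(1+p_1)(\bfb_1^{\operatorname{NR}})'$ and hence Formula~\eqref{mainformula}; substituting Getzler's expression \eqref{getzler-1} for $\bfb_1^{\operatorname{NR}}$ then expresses the answer through the generating functions of $\Mm 0m$, $\Mmb 0\ell$ and $\Mm 1k$.

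The step I expect to be the main obstacle is the faithful bookkeeping of the $\s_n$-equivariant structure throughout this reduction. Because the isomorphisms supplied by Lemma~\ref{stratawise} are noncanonical (as noted after Corollary~\ref{strata}), one must verify that they nonetheless respect the permutation action of the marked points well enough for the equivariant Euler characteristics to add and multiply as claimed; and the crux is to match precisely the two geometric operations on the universal curve---``the extra point is free on the necklace'' and ``the extra point sits on a two-pointed rational tail''---with the symmetric-function operators $\partial/\partial p_1$ and $p_1\,\partial/\partial p_1$. One small point to keep track of is that these are identities of generating functions in strictly positive degree: the operator $\partial/\partial p_1$ produces a degree~$0$ contribution from $\hodgechar 1{\NR 1}$ which is immaterial here, since every $\Jmb d1n$ has $n\geq 1$. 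Once the dictionary between the geometry of the universal point and the operators $\partial/\partial p_1$, $p_1$ is in place, both the plethysm and the prefactor $(1+p_1)$ follow formally, and the final substitution of \eqref{getzler-1} is purely notational.
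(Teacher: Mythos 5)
Your proposal is correct and takes essentially the same route as the paper: purity reduces the problem to Hodge Euler characteristics, Lemma~\ref{stratawise} together with the stratification \eqref{stratum-mb1n} turns the sum over strata into the plethysm $\circ(p_1+\bfb_0')$ of Getzler's formula~\eqref{getzler-2}, and the splitting of $\CNR k$ into the open locus $\cong\NR{k+1}$ and the $k$ sections where the universal point meets a marking produces the factor $(1+p_1)(\bfb_1^{\operatorname{NR}})'$ via the operators $\partial/\partial p_1$ and $p_1\,\partial/\partial p_1$, exactly as in the paper's proof. Your closing remark that \eqref{mainformula} is an identity in strictly positive degree (the constant term of $(\bfb_1^{\operatorname{NR}})'$ coming from $\hodgechar 1{\NR 1}$ has no counterpart on the left-hand side, though note it must still be retained inside $p_1(\bfb_1^{\operatorname{NR}})'$, where it contributes in degree $1$) is a small point of care that the paper's own argument leaves implicit.
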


In the appendix we include some tables of the $\s_n$-equivariant Betti numbers of \fcujs{} in genus $1$.

Let us recall from  \eqref{stratum-mb1n} that $\Mmb 1n$ can be stratified according to the number $k$ of rational tails of the curves into constructible subsets of the form
\[\NR k \times \prod_{j=1}^k\Mmb 0{I_j\cup\{*\}}, \]
where we choose  to interpret marked points on the necklace subcurve as rational tails with a single marked point
by setting $\Mmb 0{\{m,*\}}:=\Mmb 02=\{\text{point}\}$.

\begin{proof}
Our statement follows from Lemma~\ref{stratawise} combined with  Theorem~\ref{thm-getzler}. We therefore start by recalling Formula \eqref{getzler-2}. Let us recall that the expression $p_1+\bfb_0'$ is the generating function
\[
\sum_{n\geq 1}\hodgechar n{\Mmb 0{\{1,\dots,n\}\cup\{*\}}}\]
where $\Mmb 0{\{1,\dots,n\}\cup\{*\}}$ is considered with the natural action of the symmetric group $\s_n$. Since the $\s_n$-representation on $\coh\pu{\Mmb 0{\{1,\dots,n\}\cup\{*\}}}$ is given by restriction of the $\s_{n+1}$-representation on $\coh\pu{\Mmb 0{n+1}}$, we have
  \[
  [\coh k{\Mmb 0{\{1,\dots,n\}\cup\{*\}}}] = \frac \partial{\partial p_1}[\coh k{\Mmb 0{n+1}}],
  \]
  where by convention we have $[\Mmb 0{\{1,*\}}]= p_1$.

  Thus, taking the plethysm of $\bfb_1^{\operatorname{NR}}$ with $p_1+\bfb_0'$ gives as a result the generating series of the $\s_n$-equivariant Hodge Euler characteristic of the space parametrising necklace curves with any possible number of rational tails attached. For instance, by definition of plethysm the summand $\hodgechar k {\NR k} \circ (p_1+\bfb_0')$ gives the generating series of the equivariant Hodge Euler characteristics of the strata consisting of curves of genus~$1$ with exactly $k$ maximal rational tails attached:
  \[
\hodgechar k {\NR k} \circ (p_1+\bfb_0') = \sum_{\substack{n\geq k\\ I_1\sqcup I_2 \sqcup \dots \sqcup I_k = \{1,\dots,n\}}}\hodgechar n{\NR k \times \prod_{j=1}^k \Mmb 0{I_j\cup\{*\}}}.
  \]

Now that we understand why  Getzler's Formula~\eqref{getzler-2} holds,
all we need to do is to adapt the idea behind Formula~\eqref{getzler-1} to keep track of all possible compactified Jacobians in $\Jmb d1n$. Since Hodge Euler characteristics are additive, it is enough to work stratawise under the stratification \eqref{stratum-mb1n} of $\Mmb 1n$. Hence, let us fix a stratum $\mathcal S = \NR k \times \prod_{j=1}^k\Mmb 0{I_j\cup\{*\}}$ of $\Mmb 1n$. Then, by Lemma~\ref{stratawise}, the preimage $\pi^{-1}(\mathcal S)$ of $\mathcal S$ under $\Jmb d1n\rightarrow \Mmb 1n$ is isomorphic the moduli stack parametrizing $(k+1)$-tuples $(\mathcal C,\mathcal T_1,\dots,\mathcal T_k)$ where \begin{itemize}
\item[-] $\mathcal C=(C,t_1,\dots,t_k,p)$ is a fibre of the universal family over $\NR k$, i.e. a necklace curve $C$ of genus~$1$ with $k$ distinct ordered marked points $t_1,\dots,t_k$ and an additional point $p$ which can lie anywhere on $C$;
\item[-] for all $j=1,\dots,k$, the stable rational curve $\mathcal T_{j} = (T_j,(p_i)_{i\in I_j},p_*)\in\Mmb 0{I_j\cup\{*\}}$ represents the rational tail attached to $C$ at the point $t_j\in C$ by identifying $p_*\in T_j$ with $t_j\in C$.
\end{itemize}

Hence, to obtain a formula for the generating function of the Hodge Euler characteristics of $\Jmb d1n$, we need to replace $\bfb_1^{\operatorname{NR}}$ in Formula ~\eqref{getzler-2} with the generating function
\[\bfc_1^{\operatorname{NR}} :=\hodgechar k{\CNR k}\]
of the universal family $\CNR k \rightarrow \NR k$. Here, we have to distinguish between two cases. The first case is the general one and defines an open substack $U\subset \CNR k$. It corresponds to the case in which the point $p$ is distinct from the marked points $t_1,\dots,t_k$ at which the elliptic tails are attached. Then $U$ can be identified with the point $(C,t_1,\dots,t_k,p)\in\NR{k+1}$ and its contribution to $\bfc_1^{\operatorname{NR}}$ is exactly
\[\frac{\partial \bfb_1^{\operatorname{NR}}}{\partial p_1} = (\bfb_1^{\operatorname{NR}})'.\]

The other case is the case in which $p$ coincides with one of the other marked points. This gives rise to a closed substack of $\CNR k$ which is the disjoint union of $k$ copies of $\NR k$ indexed by $j=1,\dots,k$. To obtain the contribution of $\CNR k\setminus U$ to $\bfc_1^{\operatorname{NR}}$, we need first to consider $k$ copies of $\NR k$ with the action of $\s_{k-1}$ and then extend the corresponding representations from $\s_{k-1}$ to $\s_k$. In terms of Schur polynomials, this means that we get the additional contribution
\[p_1 \frac{\partial \bfb_1^{\operatorname{NR}}}{\partial p_1} = p_1(\bfb_1^{\operatorname{NR}})'.\]

Combining all information obtained so far yields the claim.
\end{proof}

For the convenience of the reader, we exhibit  in Tables~\ref{table-jmb-1} and~\ref{table-jmb} at the end of this paper the $\s_n$-equivariant Hodge Euler characteristic of $\Jmb d1n$ for all $n\leq 8$.
Observe that the Hodge Euler characteristics have the same coefficients for Hodge weight (and degree) $k$ and $2n-k$, in accordance with Poincar\'e duality. One can compare them with the cohomology of $\Mmb 1{n+1}$ which is in Table~\ref{table-mmb}. All tables were obtained by implementing in Sage \cite{sagemath} the formulas in Theorem~\ref{main} and \ref{thm-getzler}.

\section{Classification of genus \texorpdfstring{$1$}{1} fine compactified universal Jacobians}
\label{fcuj-classification}
In this section we give an explicit combinatorial classification of all genus~$1$ fine compactified universal Jacobians of some fixed degree $d\in \mathbb{Z}$ building on Proposition~\ref{f-for-fcjs}, the analogous statement  for necklace curves. From this description we  deduce that not all fine compactified universal Jacobians can be constructed from some universal polarisation as  in \cite{kp2} or in \cite{melouniversal}, see Definition~\ref{phistab}. %

Our classification is based on associating with each degree~$d$ \fcuj{} a pair $(f,g)$ of integer-valued functions. Along the way we will prove in Lemma~\ref{g-for-rattails} that the function $g$ describes the restriction of the multidegree of each element of the fine compactified Jacobian to the rational tails of the curve, and that the function $f$ encodes the information about the multidegrees of line bundles on all necklace curves.

Both $f$ and $g$ are defined by looking at all possible pairs $(X,L)\in\Jmb d1n$ where $L$ is a line bundle and $X$ is a curve with two components. 
In the case of $f$, one considers all stable necklace curves $X$ with two rational components and the values of $f$ are given by the minimal degree of the restriction of a line bundle $L$ to the component not containing the last marked point $p_n$. In the case of $g$, one considers all  curves $X$ with two components separated by one node; the values of $g$ are then obtained as the degree of the restriction of $L$ to the rational component.

\begin{notation}
\begin{enumerate}
    \item For every $n\geq 1$, we denote the set  of all nonempty subsets of $\{1, \ldots, n-1\}$ by $\mathcal{P}^+_{n-1}$ and the set of all subsets of $\{1,\dots,n\}$ containing at least $2$ elements by $\mathcal Q^+_n$.
\item
    We say that a function $f\colon \mathcal  P^+_{n-1}\rightarrow \Z$
is \emph{mildly superadditive}  if it satisfies
\begin{equation}\label{superadd}
0\leq f(I\cup J) - f(I) - f(J) \leq 1 \text{ for all }I,J\subset\{1,\ldots, n-1\}, I \cap J = \emptyset.
\end{equation}
\end{enumerate}  
\end{notation}

If $\Jmb d1n\subset\Simp d{\Cmb 1n/\Mmb 1n}$ is a fixed \fcuj{}, we  denote by $\overline{\mathcal J}^d(X)$ its fibre over the point $[X]\in\Mmb 1n$ representing the stable $n$-pointed curve $X$. We can use the multidegrees of the line bundles in $\overline{\mathcal J}^d(X)$ to define a pair of functions $(f,g)$ as we described above:

\begin{definition}\label{def:fandg}
For every $T\in\mathcal Q^+_n$, let us fix a curve $(X(T),p_1,\dots,p_n)\in\Mmb 1n$ with two components $Y(T)$ and $Z(T)$ of genus $1$ and $0$, respectively, where the rational component $Z(T)$ contains exactly the marked points in $T$. Analogously, for every $I\in\mathcal P^+_{n-1}$ we choose an $n$-pointed  stable curve $X_I=Y_I \cup Y'_I$ consisting of two rational components meeting at two nodes, where the component $Y_I$ contains exactly the marked points in $I$.

If $\Jmb d1n$ is a \fcuj, we define the associated function $g_{\Jmb d1n}\colon \mathcal Q^+_n\rightarrow \Z$ by setting $g_{\Jmb d1n}(T)$ to be $\deg_{Z(T)}L$ where $L$ is any line bundle in $\overline{\mathcal J}^d(X(T))$.
We then  set %
$f_{\Jmb d1n}\colon \mathcal P^+_{n-1}\rightarrow \Z$ to be the function defined by 
\begin{equation}\label{definef} f_{\Jmb d1n}(I) = \min\{\deg_{Y_I}L\co L \text{ line bundle in }\overline{\mathcal J}^d(X_I)\}.
\end{equation}
\end{definition}

We can now state the main result of this section.
\begin{theorem}\label{f-and-g}
For every $d \in \mathbb{Z}$, the map that associates with every $\Jmb d1n\subset\Simp d{\Cmb 1n/\Mmb 1n}$ the pair of integer-valued functions $(f_{\Jmb d1n}, g_{\Jmb d1n})$, defined in Definition~\ref{def:fandg}, is a bijection between the set of all degree~$d$ \fcujs{}  over $\Mmb{1}{n}$, and the set of pairs of functions $f\colon \mathcal{P}^+_{n-1}\to \mathbb{Z},g\colon \mathcal{Q}^+_{n}\to \mathbb{Z}$, such that $f$ is mildly superadditive (as defined in \eqref{superadd}). 
\end{theorem}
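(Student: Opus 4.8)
The plan is to show that the assignment $\Jmb d1n \mapsto (f_{\Jmb d1n}, g_{\Jmb d1n})$ is well defined with values in the stated set, and then to prove bijectivity by reconstructing the \fcuj{} from the pair $(f,g)$. First I would verify well-definedness. The function $g$ is unambiguous because each $X(T)$ has a separating node, so by Lemma~\ref{separatingnode} the fibre $\overline{\mathcal J}^d(X(T))$ is a product and $\deg_{Z(T)}L$ is constant on it. For $f$, I would first record that every fibre of a \fcuj{} is a \emph{smoothable} \fcj: pulling $\Jmb d1n$ back along a one-parameter family $T\to\Mmb1n$ with nonsingular total space, closed point $[X]$, and smooth generic fibre yields, by Remark~\ref{generalprop}, a \fcj{} of the smoothing whose central fibre is $\overline{\mathcal J}^d(X)$. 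In particular the fibre over a banana curve $X_I$ is a smoothable \fcj{} of a two-component necklace, which by Remark~\ref{bananaexample} and Lemma~\ref{description-I_n} carries line bundles of exactly two multidegrees; hence the minimum defining $f(I)$ is finite and attained.

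To see that $f$ is mildly superadditive and, at the same time, to set up the reconstruction needed for injectivity, I would compare fibres across strata by degeneration. Given disjoint $I,J\subseteq\{1,\dots,n-1\}$, I would pick a three-component necklace curve $X_3$ whose components $C_I,C_J,C_0$ carry the marked points $I$, $J$ and the remaining labels (the last including $n$), and observe that all three banana curves $X_I$, $X_J$, $X_{I\cup J}$ arise as degenerations of $X_3$ obtained by breaking a single rational component in two. Since the fibre $\overline{\mathcal J}^d(X_3)$ is a smoothable \fcj, Proposition~\ref{f-for-fcjs} (equivalently Corollary~\ref{cor:fcjs}) attaches to it a mildly superadditive function on the consecutive subsets of $\{C_I,C_J\}$, with $C_0$ as the reference component; matching minimal degrees under specialisation identifies its three values with $f(I)$, $f(J)$ and $f(I\cup J)$, whence $0\le f(I\cup J)-f(I)-f(J)\le 1$. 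The same degeneration principle, applied to an arbitrary necklace subcurve $X'$ whose components carry marked-point sets $S_1,\dots,S_m$ (ordered so that the set containing $n$ is last), shows that the smoothable \fcj{} on $X'$ is exactly the one assigned by Proposition~\ref{f-for-fcjs} to the restriction of $f$ along consecutive unions $\bigcup_{i=r}^s S_i$. Together with $g$, which by Corollary~\ref{forgettails} records the constant degrees on the rational tails, this determines $\overline{\mathcal J}^d(X)$ over every stable curve $X$. As a \fcuj{} is an open substack of $\Simp d{\Cmb1n/\Mmb1n}$ and is thus determined by its closed fibres, injectivity follows.

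For surjectivity I would reverse this prescription. Given a mildly superadditive $f\colon\mathcal P^+_{n-1}\to\Z$ and an arbitrary $g\colon\mathcal Q^+_n\to\Z$, I would define a candidate fibre over each stable curve $X$ by taking, on its necklace subcurve $X'$, the smoothable \fcj{} attached by Proposition~\ref{f-for-fcjs} to the induced local function (which is again mildly superadditive, since restricting $f$ along consecutive unions of the $S_i$ preserves the defining inequalities), and by prescribing the rational-tail degrees through $g$, assembling the product via Lemma~\ref{separatingnode} and Corollary~\ref{forgettails}. Each candidate fibre is then a genuine \fcj. It remains to check that these fibres are the closed fibres of an honest \fcuj, that is, that their union is an open substack of $\Simp d{\Cmb1n/\Mmb1n}$ which is proper over $\Mmb1n$. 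Openness is a local statement I would verify on the affine charts of $\Simp d{\Cmb1n/\Mmb1n}$ coming from the local models of Lemma~\ref{description-I_n}, and properness I would establish through the valuative criterion, using that $\Simp d{\Cmb1n/\Mmb1n}$ already satisfies its existence part.

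The hard part is precisely this last gluing step. One must show that the fibrewise smoothable \fcjs{} determined by a mildly superadditive $f$ are mutually compatible across \emph{every} codimension-one degeneration in $\Mmb1n$ --- not only the banana degenerations used to read off $f$ --- so that the resulting family is both separated and universally closed over $\Mmb1n$ (recall that $\Simp d{\Cmb1n/\Mmb1n}$ is itself non-separated). I expect mild superadditivity to be exactly the combinatorial condition ruling out the non-separated gluings of equal-type nodes analysed in the proof of Lemma~\ref{description-I_n}, and making the limit furnished by the existence part of the valuative criterion both unique and contained in the constructed locus. This parallels, but genuinely generalises, the wall-and-chamber picture of $V^d_{1,n}$ in Proposition~\ref{polytopes}: its chambers produce only those $f$ arising from a single polarisation, whereas the present argument must accommodate \emph{all} mildly superadditive $f$, including the exotic ones.
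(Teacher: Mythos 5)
Your treatment of well-definedness, mild superadditivity and injectivity is essentially sound and close in spirit to the paper's: the paper likewise reads off $f$ from necklace strata, uses flatness (compatibility of multidegrees under edge contraction) to compare fibres across degenerations, and appeals to Proposition~\ref{f-for-fcjs} and Corollary~\ref{cor:fcjs} fibrewise. One smaller omission on this side: for curves with nested rational tails, Corollary~\ref{forgettails} only tells you that the tail degrees are \emph{constant}, not that they are determined by --- and consistently extendable from --- the function $g$, which is only defined on two-component one-node curves; the paper needs the unitriangular linear system of Lemma~\ref{g-for-rattails} (borrowed from \cite[Lemma~3.9]{kp2}) for exactly this point, both for injectivity and for the construction in surjectivity.

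The genuine gap is surjectivity, and it sits exactly where you write ``the hard part is precisely this last gluing step''. You construct candidate fibres from $(f,g)$ and must then show that their union is an open substack of $\Simp d{\Cmb 1n/\Mmb 1n}$ that is proper over $\Mmb 1n$; at this point your argument stops being a proof and becomes an expectation (``I expect mild superadditivity to be exactly the combinatorial condition\dots''). A direct verification on charts plus the valuative criterion, as you propose, would amount to redoing Oda--Seshadri/Esteves-type openness and properness arguments in families. The paper avoids this entirely (Lemma~\ref{existenceofj}) by a reduction that your final sentence explicitly dismisses: it covers $\Mmb 1n$ by the open substacks $\mathcal U_\Gamma$ of curves specialising to the single curve $X_\Gamma$ of each deepest stratum $\Gamma\in T_{1,n}$; by Proposition~\ref{singleispolarised} the prescribed fibre over $X_\Gamma$ equals $\overline{\mathcal J}^d_{\phi'_\Gamma}(X_\Gamma)$ for some nondegenerate $\phi'_\Gamma\in V^d(\Gamma)$, and by Corollary~\ref{surjectivegenus1} --- a surjectivity statement specific to genus $1$ --- this $\phi'_\Gamma$ lifts to a nondegenerate universal polarisation $\phi_\Gamma\in V^d_{1,n}$. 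Compatibility then forces the candidate family to coincide with $\Jmb d1n(\phi_\Gamma)$ over all of $\mathcal U_\Gamma$, so openness is immediate and properness is inherited from Proposition~\ref{polarisedarefine}, properness being local on the base. Thus your concluding claim --- that the wall-and-chamber picture ``produces only those $f$ arising from a single polarisation'' and so cannot accommodate the exotic ones --- is precisely where the plan goes astray: the exotic examples of Example~\ref{exotic} admit no single \emph{global} polarisation, but every mildly superadditive $f$ is polarised \emph{locally over} $\Mmb 1n$, and local polarisability is all that the existence proof requires.
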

This explicit combinatorial description allows us to construct in Example~\ref{exotic}, for every $n \geq 6$, a genus~$1$ fine compactified universal Jacobian that is not equal to $\Jmb d1n (\phi)$ for any choice of a nondegenerate $\phi \in V_{1,n}^d$. (See Section~\ref{notallpolar} for the notion of $\phi$-polarised fine compactified Jacobians).

As a first step towards the proof of Theorem~\ref{f-and-g}, we consider the discrete data associated with the fibres $\overline{\mathcal J}^d(X)$ of $\Jmb d1n$
 over any stable curve $X$, where $\Jmb d1n$ is a fixed \fcuj{}.
It follows from Corollary~\ref{forgettails} and Corollary~\ref{cor:fcjs} that $\overline{\mathcal J}^d(X)$
is a necklace curve with $r$ components (where $r$ is the number of components of the curve $X'$ obtained from $X$ by contracting all its rational tails) and that each component $J_i$ of $\overline{\mathcal{J}}^d(X)$ is identified by the multidegree $\mathbf d:= \deg L\in \Z^{\Vrt(\Gamma(X)}$  of some (equivalently all) line bundle(s) $L\in J_i$.

Since $\Jmb d1n$ is flat over $\Mmb 1n$,  if $X$ and $X'$ are stable curves with the same topological type, then the collection of multidegrees of line bundles in $\overline{\mathcal J}^d(X)$ is the same as the collection of multidegrees of line bundles in $\overline{\mathcal J}^d(X')$. This motivates the following definition:

\begin{definition}\label{sgamma}
Let $\Jmb d1n$ be a \fcuj{}. Then for each stable graph $\Gamma \in G_{1,n}$, denote by $S_\Gamma(\Jmb d1n) = \{\mathbf d_1,\dots,\mathbf d_{r(\Gamma)}\}\subset\Z^{\Vrt(\Gamma)}$ the collection of multidegrees of line bundles in $\Jmb d1n$ on curves of type $\Gamma$, where $r(\Gamma)$ is the number of vertices of $\Gamma$ that are not contained in any rational tail.
\end{definition}

The above definition assigns to every degree~$d$ \fcuj $\Jmb d1n$ the datum  
\begin{equation}\left\{ (\Gamma,S_\Gamma(\Jmb d1n))\co \Gamma \in G_{1,n}, S_\Gamma(\Jmb d1n)\subset \mathbb{Z}^{\operatorname{Vert}(\Gamma)}\right\}\label{correspondence}\end{equation}
of subsets of $\mathbb{Z}^{\operatorname{Vert}(\Gamma)}$ for every stable $n$-pointed graph $\Gamma$ of genus $1$.

We will now show how every degree~$d$ \fcuj can be reconstructed from an assignment $\left\{(\Gamma, S_\Gamma)\co\Gamma \in G_{1,n},S_\Gamma\subset \mathbb{Z}^{\operatorname{Vert}(\Gamma)}\right\}$ of line bundle multidegrees 
that satisfies two constraints that we are now going to describe.

The first constraint is that, for every $\Gamma\in G_{1,n}$, the set $S_{\Gamma}\subset \mathbb{Z}^{\operatorname{Vert}(\Gamma)}$ should satisfy the conditions described in Corollaries~\ref{forgettails} and \ref{cor:fcjs} to be the collection of line bundle multidegrees of some degree~$d$ fine compactified Jacobian of some curve (equivalently all curves) of topological type $\Gamma$.

The second constraint is imposed by the fact that $\Jmb d1n$ parametrises \emph{flat} families of sheaves. For this reason, the assignment $(\Gamma,S_\Gamma)$
of line bundle multidegrees on curves with dual graph $\Gamma$ 
should be compatible with curves degeneration, 
i.e. with morphisms between dual graphs.
Specifically, if $\gamma\colon \Gamma\rightarrow \Gamma'$ is a morphism in $G_{1,n}$, then the datum of \eqref{correspondence} satisfies
\begin{equation}\label{functorial} S_{\Gamma'} = \left\{d'(w) := \sum_{v\in\gamma^{-1}(w)} d(v)\colon\mathbf d \in S_\Gamma\right\}.\end{equation}

The two constraints on line bundle multidegrees are summarized in the following definition.
\begin{definition} \label{compatible} 
Let $\left\{(\Gamma,S_\Gamma)\co\Gamma \in G_{1,n}, S_\Gamma\subset \mathbb{Z}^{\operatorname{Vert}(\Gamma)} \right\}$
be an assignment, for each topological type $\Gamma$, of the degree~$d$ line bundle multidegrees of a \fcj of a stable curve of topological type $\Gamma$. We say that the assignment is  \emph{compatible} if \eqref{functorial} holds for all morphisms $\Gamma\to \Gamma'$ in $G_{1,n}$. The same condition can be required of any full subcategory of $G_{1,n}$ such as the subcategory $G_{1,n}^{\operatorname{NR}}$ of genus~$1$ of stable graphs without rational tails.
\end{definition}
 The notion of compatibility is analogous to the compatibility that was required in Definition~\ref{spaceofpolar} for elements of $V_{g,n}^d$.

We are now ready to discuss the inverse of the correspondence described in ~\eqref{correspondence}.

\begin{lemma}\label{existenceofj}
Let $\left\{(\Gamma,S_\Gamma)\co\Gamma \in G_{1,n}, S_\Gamma\subset \mathbb{Z}^{\operatorname{Vert}(\Gamma)} \right\}$ 
be a compatible assignment of line bundle multidegrees of degree~$d$ fine compactified Jacobians on each topological type of stable $n$-pointed  curves of genus $1$.
Then there exists a unique \fcuj{} $\Jmb d1n\subset \Simp d{\Cmb 1n/\Mmb 1n}$ such that $S_\Gamma = S_\Gamma(\Jmb d1n)$ holds.
\end{lemma}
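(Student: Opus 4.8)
The plan is to realise $\Jmb d1n$ locally on $\Mmb 1n$ as a polarised \fcuj{} and then glue these local models. Two rigidity facts make this possible. First, a \emph{smoothable} \fcj{} of a single genus~$1$ curve is determined by its set of line bundle multidegrees: after removing rational tails via Corollary~\ref{forgettails}, the set $S$ on the necklace subcurve determines the function $I\mapsto\min_{\mathbf d\in S}\sum_{i\in I}d_i$, which by Proposition~\ref{f-for-fcjs} (equivalently Corollary~\ref{cor:fcjs}) determines the \fcj{} uniquely. Second, by Proposition~\ref{singleispolarised} every such \fcj{} arises from a polarisation. (Note that the $S_\Gamma$ in our assignment \emph{are} multidegree sets of smoothable \fcjs{}, since the conventions of Definition~\ref{compatible} invoke Corollary~\ref{cor:fcjs}.)

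First I would fix, for each $\Gamma\in G_{1,n}$, a polarisation realising $S_\Gamma$. By Proposition~\ref{singleispolarised} there is a nondegenerate $\phi_\Gamma\in V^d(\Gamma)$ whose stable multidegrees are exactly $S_\Gamma$. Since the projection $V^d_{1,n}\to V^d(\Gamma)$ is a surjective, hence open, linear map (Corollary~\ref{surjectivegenus1}) and the nondegenerate locus of $V^d_{1,n}$ is dense and open (Proposition~\ref{polytopes}), the preimage of the stability cell of $\phi_\Gamma$ is a nonempty open subset of $V^d_{1,n}$ meeting the nondegenerate locus; choosing a point there yields a nondegenerate $\tilde\phi_\Gamma\in V^d_{1,n}$ whose $\Gamma$-component lies in the same cell as $\phi_\Gamma$. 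Writing $S_{\Gamma'}(\tilde\phi_\Gamma)$ for the multidegrees of $\Jmb d1n(\tilde\phi_\Gamma)$ appearing on the $\Gamma'$-stratum, this gives $S_\Gamma(\tilde\phi_\Gamma)=S_\Gamma$. By Proposition~\ref{polarisedarefine}, $\Jmb d1n(\tilde\phi_\Gamma)$ is an honest \fcuj{}: open in $\Simp d{\Cmb 1n/\Mmb 1n}$ and proper over $\Mmb 1n$.

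Next I would compare $\Jmb d1n(\tilde\phi_\Gamma)$ with the prescribed data over the open star $U_\Gamma\subseteq\Mmb 1n$ of the $\Gamma$-stratum, i.e. the open substack swept out by the strata of types $\Gamma'$ admitting a morphism $\gamma\colon\Gamma\to\Gamma'$. As $\Jmb d1n(\tilde\phi_\Gamma)$ is flat over $\Mmb 1n$, its multidegree sets are compatible in the sense of \eqref{functorial} — this is precisely the flatness argument preceding Definition~\ref{compatible} — so for every such $\Gamma'$ one has $S_{\Gamma'}(\tilde\phi_\Gamma)=\gamma_* S_\Gamma(\tilde\phi_\Gamma)=\gamma_* S_\Gamma=S_{\Gamma'}$, the last equality being the compatibility of the given assignment. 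By the fibrewise rigidity recalled above, the restriction $\Jmb d1n(\tilde\phi_\Gamma)|_{U_\Gamma}$ therefore has exactly the prescribed fibres. In particular, on an overlap $U_\Gamma\cap U_{\Gamma''}$ the polarised models $\Jmb d1n(\tilde\phi_\Gamma)$ and $\Jmb d1n(\tilde\phi_{\Gamma''})$ have identical fibres; being open substacks of the same ambient $\Simp d{\Cmb 1n/\Mmb 1n}$ with the same points, they coincide there.

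Since the stars $U_\Gamma$ cover $\Mmb 1n$ (each stratum lies in its own star), the local models glue to a single open substack $\Jmb d1n\subseteq\Simp d{\Cmb 1n/\Mmb 1n}$ with $S_\Gamma(\Jmb d1n)=S_\Gamma$ for all $\Gamma$, nonempty as it contains line bundles. Properness over $\Mmb 1n$ is local on the target, and over each $U_\Gamma$ our object equals the proper stack $\Jmb d1n(\tilde\phi_\Gamma)|_{U_\Gamma}$, so $\Jmb d1n\to\Mmb 1n$ is proper; thus $\Jmb d1n$ is a \fcuj{}. Uniqueness follows from the same fibrewise rigidity: any \fcuj{} with the given multidegree data has, fibre by fibre, the \fcj{} determined by $S_\Gamma$, hence the same points, hence is the same open substack. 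The step I expect to require the most care is exactly the one isolating why the polarised local models carry the prescribed data on the whole neighbourhood $U_\Gamma$ rather than only on the central stratum: it rests on the interplay between the flatness of $\Jmb d1n(\tilde\phi_\Gamma)$ (which forces compatibility of \emph{its} multidegrees) and the compatibility hypothesis on $\{S_\Gamma\}$, together with the nondegenerate lifting afforded by Corollary~\ref{surjectivegenus1}.
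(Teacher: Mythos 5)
Your proof is correct and takes essentially the same route as the paper's: both realise the prescribed data locally by polarised fine compactified universal Jacobians (Proposition~\ref{singleispolarised} together with a nondegenerate lift to $V^d_{1,n}$ via Corollary~\ref{surjectivegenus1}), use flatness of the polarised model plus compatibility of $\{S_\Gamma\}$ to see that it carries the prescribed multidegrees on the whole open star of the $\Gamma$-stratum, glue over the resulting open cover, deduce properness because it is local on the base, and obtain uniqueness from the fibrewise rigidity of Corollaries~\ref{forgettails} and~\ref{cor:fcjs}. The only differences are cosmetic: the paper covers $\Mmb 1n$ by the stars of the deepest strata (graphs with $n$ edges) alone and asserts an exact nondegenerate extension of $\phi'_\Gamma$, whereas you cover by the stars of all strata and lift the polarisation by a chamber-preserving perturbation.
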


\begin{proof}
Uniqueness follows immediately as a consequence of the description in Corollaries~\ref{forgettails} and \ref{cor:fcjs}. Indeed, the collection of line bundle multidegrees of a fine compactified Jacobian $\overline{\mathcal J}^d(X)$ of a stable curve $X$ of genus $1$ %
contains enough information that from it we can recover the multidegrees of all points of $\overline{\mathcal J}^d(X)$. %

To show existence we will prove that the moduli space $\mathcal{Y} \subset \Simp d{\Cmb1n/\Mmb 1n}$ of sheaves corresponding to the assignment $\left\{(\Gamma,S_\Gamma)\co\Gamma \in G_{1,n}, S_\Gamma\subset \mathbb{Z}^{\operatorname{Vert}(\Gamma)} \right\}$  is open and proper. 
We will achieve this by showing how $\mathcal Y$ can be defined by gluing together the restriction  of \fcujs of the form $\overline{\mathcal J}^d_{1,n}(\phi)$ to certain open subsets of $\Mmb 1n$. 

Let $T_{1,n} \subset G_{1,n}$ be the subset of graphs with $n$ edges.
 Then the stratum of $\Mmb 1n$ corresponding to each $\Gamma\in T_{1,n}$ consists of a single curve (modulo isomorphism), which we denote by $X_\Gamma$. 
For all $\Gamma\in T_{1,n}$, define $\mathcal{U}_\Gamma\subset \Mmb 1n$ as the open substack obtained by taking all curves $X$ that specialise to $X_\Gamma$, i.e. those
whose dual graph admits a morphism $\Gamma(X)\rightarrow \Gamma$.
Since the curves $X_\Gamma$ are the deepest strata of the stratification of $\Mmb 1n$ by topological type,
the set $\{\mathcal U_\Gamma\}_{\Gamma \in T_{1,n}}$ is an open cover of~$\Mmb 1n$. %

For each $\Gamma\in T_{1,n}$, by Proposition~\ref{singleispolarised}, there exists  a nondegenerate $\phi_\Gamma'\in V^d(\Gamma)$ such that the collection of  multidegrees of line bundles in $\overline{\mathcal J}^d_{\phi'_\Gamma}(X_\Gamma)$ equals $S_{\Gamma}$. By Proposition~\ref{surjectivegenus1}, we can choose a nondegenerate $\phi_\Gamma \in V_{1,n}^d$ that extends $\phi_\Gamma'$. Since $\{(\Gamma,S_\Gamma)\}$ is a compatible assignment, we have that for every curve $X$ in $\mathcal U_\Gamma$ the collection of multidegrees of line bundles in  $\overline{\mathcal J}^d_{\phi_\Gamma(\Gamma(X))}(X)$ equals $S_{\Gamma(X)}$. Hence we define $\mathcal{Y}|_{\mathcal{U}_\Gamma}$ as the restriction $\overline{\mathcal J}^d_{1,n}(\phi_\Gamma)|_{\mathcal{U}_\Gamma}$. This immediately implies that $\mathcal Y$ is open in $\Simp d {\Cmb 1n / \Mmb 1n}$.
Moreover, since $\mathcal{Y}|_{\mathcal{U}_\Gamma}=\overline{\mathcal J}^d_{1,n}(\phi_\Gamma)|_{\mathcal{U}_\Gamma} \to \mathcal{U}_\Gamma$ is proper,  by 
\cite[\href{https://stacks.math.columbia.edu/tag/01W0}{Lemma~29.41.3}]{stacks} we obtain that the representable morphism $\mathcal{Y}\to \Mmb 1n$ is proper as well.
\end{proof}

In order to prove Theorem~\ref{f-and-g}, we next tackle the problem of finding a combinatorial description of the restriction of each compatible assignment~\eqref{correspondence} %
to the rational tails of each topological type $\Gamma$. %
We will do this by associating with  each assignment 
the function $g\colon \mathcal Q^+_n\rightarrow \Z$ obtained by taking the degree on the genus $0$ component of every $2$-component curve with $1$ node, as described in Definition~\ref{def:fandg}.

\begin{lemma}\label{g-for-rattails} 
The map 
\[\Jmb d1n \longmapsto \left(\left\{(\Gamma,S_\Gamma(\Jmb d1n))\co\Gamma \in G_{1,n}^{\NRop} \right\}, g_{\Jmb d1n}\right)\]
defined using Definition~\ref{sgamma} and Definition~\ref{def:fandg} gives a bijection between the set of all degree~$d$ \fcujs{} $\Jmb d1n$ over $\Mmb 1n$ and the set of pairs   %
\[\left(\left\{(\Gamma,S_\Gamma)\co\Gamma \in G_{1,n}^{\NRop}, S_\Gamma\subset \mathbb{Z}^{\operatorname{Vert}(\Gamma)} \right\}, g\colon \mathcal Q_n^+\rightarrow \Z\right)\]
where the first component is a compatible assignment of line bundle multidegrees over all topological types of necklace curves and the second an arbitrary function.
\end{lemma}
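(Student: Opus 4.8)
The plan is to build directly on Lemma~\ref{existenceofj}, which already identifies degree~$d$ \fcujs{} with compatible assignments $\left\{(\Gamma,S_\Gamma)\co\Gamma\in G_{1,n}\right\}$ of line bundle multidegrees over \emph{all} stable graphs. It therefore suffices to show that such a compatible assignment over $G_{1,n}$ is the same datum as a pair consisting of its restriction to the full subcategory $G_{1,n}^{\NRop}$ of necklace graphs (which is automatically compatible, since $G_{1,n}^{\NRop}$ is full) together with an arbitrary function $g\colon\mathcal Q^+_n\to\Z$. The forward map restricts the assignment and reads off $g$ from the fixed two-component curves $X(T)$ of Definition~\ref{def:fandg}; the content of the statement is that this map is a bijection.

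The first step is to establish the reconstruction principle for rational tails. Fix $\Gamma\in G_{1,n}$ and a rational subtree $R_\nu$ hanging off a single node $\nu$ and carrying exactly the markings in a set $T_\nu$; stability forces $|T_\nu|\geq 2$, so $T_\nu\in\mathcal Q^+_n$. Contracting $R_\nu$ to a single genus~$0$ vertex and the complementary subcurve to a single genus~$1$ vertex is a morphism in $G_{1,n}$ whose target is exactly the curve $X(T_\nu)$, so the compatibility relation \eqref{functorial} yields that the total degree carried by $R_\nu$ equals $g(T_\nu)$, independently of the element of the fibre by Corollary~\ref{forgettails}. Applying this at every node and subtracting from each $g(T_C)$ the $g$-values of the maximal proper subtrees hanging below a component $C$ then recovers the individual degree of every tail component. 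Writing $\Gamma_0\in G_{1,n}^{\NRop}$ for the necklace graph obtained by contracting all rational tails, functoriality of $\Gamma\to\Gamma_0$ shows that the multidegree on each core vertex of $\Gamma$ equals the value of $S_{\Gamma_0}$ at that vertex minus the total degree of the tails attached there. Thus $S_\Gamma$ is entirely determined by the necklace restriction and by $g$, which proves injectivity of the forward map.

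For surjectivity I would use these same formulas to \emph{define} $S_\Gamma$ for every $\Gamma\in G_{1,n}$, starting from an arbitrary compatible necklace assignment and an arbitrary $g$. Each resulting $S_\Gamma$ is a legitimate collection of line bundle multidegrees of a \fcj on a curve of type $\Gamma$: its core part is a translate of $S_{\Gamma_0}$ by a fixed integer vector and hence still satisfies the conditions of Corollaries~\ref{forgettails} and~\ref{cor:fcjs} (with a shifted $\mathbf D$), while the tail component degrees are arbitrary integers subject to no further constraint. This last point is also why $g$ is unconstrained, in contrast with the mild superadditivity \eqref{superadd} imposed on $f$: the fine compactified Jacobian of a genus~$0$ subcurve is a point for \emph{any} total degree, so no stability inequality is attached to the tails. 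It then remains to verify the compatibility condition \eqref{functorial} for the extended assignment, for which it is enough to check it on edge contractions of three kinds: contractions internal to the necklace core (handled by compatibility of the given necklace assignment), contractions internal to a rational tail (where the telescoping of the $g$-differences makes the merged component degree equal the sum of the two), and contractions of a tail-to-core edge.

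I expect this last case—the tail-to-core contractions—to be the main obstacle, since it is the only place where the necklace data and the function $g$ genuinely interact: one must confirm that absorbing a tail of total degree $g(T_R)$ onto a core vertex is simultaneously consistent with the translation used to define the core part of $S_\Gamma$ and with the bookkeeping $d' + \sum_i g(T_i) = d$ already encoded in the compatibility of the necklace assignment. Once this verification is carried out, applying Lemma~\ref{existenceofj} in reverse produces the unique \fcuj{} realizing the given pair, which completes the bijection.
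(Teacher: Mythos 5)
Your proposal is correct, and its skeleton coincides with the paper's: both reduce the statement, via Lemma~\ref{existenceofj} and Definition~\ref{compatible}, to showing that a compatible assignment over all of $G_{1,n}$ is the same datum as its (automatically compatible) restriction to $G_{1,n}^{\NRop}$ plus an arbitrary $g\colon\mathcal Q^+_n\to\Z$. Where you genuinely differ is in the reconstruction mechanism. The paper extends a multidegree from the necklace contraction $\Gamma'$ to $\Gamma$ by solving a linear system in the $l$ unknown tail-vertex degrees, whose constraints are the total degree being $d$ plus the $l-1$ compatibility constraints from the morphisms contracting all but one tail edge, and it cites \cite[Lemma~3.9]{kp2} for the fact that this system has determinant $\pm 1$, hence a unique integral solution. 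You instead use, for each tail vertex $v$, the contraction of the subtree $R_v$ below $v$ (and of the complementary subcurve) onto the two-vertex graph of Definition~\ref{def:fandg}, getting the constraints $\sum_{w\in R_v} d(w)=g(T_{R_v})$; this system is triangular in the tree ordering and is solved in closed form by the telescoping $d(v)=g(T_{R_v})-\sum_{w \,\mathrm{child\ of}\, v} g(T_{R_w})$, with core degrees recovered from functoriality of $\Gamma\to\Gamma_0$. What your route buys is self-containedness (no appeal to \cite{kp2}) and explicit formulas, at the cost of having to verify compatibility of the extended assignment case by case, which the determinant formulation packages more implicitly.

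On the step you flag as the ``main obstacle'': it is in fact routine, so there is no gap. Contracting a tail-to-core edge joining the core vertex $u$ to the tail root $v$ does not change the necklace contraction, i.e.\ $\Gamma'_0=\Gamma_0$, so both $S_\Gamma$ and $S_{\Gamma'}$ are computed from the same necklace datum $S_{\Gamma_0}$ and the same $g$. Writing $s(u)$ for the relevant $S_{\Gamma_0}$-value at (the image of) $u$, your formulas give
\[
d(u)+d(v)\;=\;\Bigl(s(u)-\sum_{\mathrm{tails\ at}\ u} g\Bigr)+\Bigl(g(T_{R_v})-\sum_i g(T_{R_{w_i}})\Bigr)\;=\;s(u)-\sum_{\mathrm{other\ tails\ at}\ u} g-\sum_i g(T_{R_{w_i}}),
\]
where the $R_{w_i}$ are the subtrees hanging below $v$; the right-hand side is precisely the value your formulas assign to the merged vertex of $\Gamma'$, whose attached tails are exactly the other tails at $u$ together with the $R_{w_i}$. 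Inserting this two-line identity (together with the two easier cases you already dispose of) completes your surjectivity argument, and your proof is then a full, slightly more explicit, variant of the paper's.
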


\begin{proof}
We want to show that compatibility allows to uniquely reconstruct the collection of line bundle multidegrees of an arbitrary graph $\Gamma\in G_{1,n}$ from the line bundle multidegrees over the graph $\Gamma'$ obtained after contracting the rational tails of $\Gamma$, and the assignment $g$.

The argument to show that the values on the rational tails can be uniquely reconstructed from the function $g$ is the same as the argument in  \cite[Lemma~3.9]{kp2}. We want to show that there is a unique extension of each assignment $\bf d'$ of a line bundle multidegree over $\Gamma'$ to an assignment $\bf d$ of a line bundle multidegree over $\Gamma$ that is compatible with the assignment of line bundle bidegrees over all curves with $2$ nonsingular components and $1$ node (which is the information encoded in the function $g$).  

If $l$ is the number of vertices of $\Gamma$ contained in some rational tail, extending $\bf d'$ corresponds to determining the value of $l$ extra integers. A set of $l$ affine linear constraints is obtained by imposing that the total degree equals $d$ ($1$ constraint) and from compatibility by considering the $l-1$ morphisms each of which contracts all except $1$ of the edges contained in the rational tails ($l-1$ constraints). The proof of \cite[Lemma~3.9]{kp2} shows that this linear system has determinant $\pm1$. This implies that there exists a unique extension of each line bundle multidegree $\bf d'$ for $\Gamma'$ to a line bundle multidegree $\bf d$ for $\Gamma$ that is compatible with $g$, which proves our statement.  
\end{proof}

We are now ready to prove the main result. 

\begin{proof}[Proof of Theorem~\ref{f-and-g}]
After Lemma~\ref{g-for-rattails}, 
it only remains to study the combinatorial data needed to define the restriction of a degree~$d$ \fcuj to the moduli stack $\NR n$ of stable curves of genus $1$ without rational tails, i.e. necklace curves. Equivalently, it is enough to understand how to construct a compatible assignment  
$\left\{(\Gamma,S_\Gamma)\co\Gamma \in G_{1,n}^{\NRop}, S_\Gamma\subset \mathbb{Z}^{\operatorname{Vert}(\Gamma)} \right\}$
of line bundle multidegrees on necklace curves. %

Let $T_{1,n}^{\NRop} \subset G_{1,n}^{\NRop}$ be the subset of graphs with $n$ edges, i.e.
 stable necklace graphs with $n$ vertices and $n$ labelled half-edges. %
By compatibility, the restriction of a \fcuj{} $\Jmb d1n$ to curves without rational tails is uniquely determined by the assignment 
$\left\{(\Gamma,S_\Gamma)\co\Gamma \in T_{1,n}^{\NRop}, S_\Gamma\subset \mathbb{Z}^{\operatorname{Vert}(\Gamma)} \right\}$
for all necklace graphs $\Gamma\in T_{1,n}^{\NRop}$, subject to the compatibility condition that for every pair $\Gamma,\Gamma'\in T_{1,n}^{\NRop}$ of necklace graphs with $n$ components and every two morphisms 
$\Gamma\xrightarrow{\alpha} \Gamma'' \xleftarrow{\alpha'}\Gamma'$ with the same target, we have that for every $\mathbf d\in S_{\Gamma}$ there exists a $\mathbf d'\in S_{\Gamma'}$ such that 
\begin{equation}\label{compexpl}
\sum_{v\in\alpha^{-1}(w)} d(v) = \sum_{v\in{\alpha'}^{-1}(w)} d'(v) \ \text{ for all }w\in\Vrt(\Gamma'').
\end{equation}

Let us review the results of Section~\ref{genus1isolated} on how to describe a collection $S_\Gamma\subset \Z^{\Vrt(\Gamma)}$ of line bundle multidegrees of a degree~$d$ \fcj on a stable curve of topological type $\Gamma\in T_{1,n}^{\NRop}$. 
Since each graph $\Gamma \in T_{1,n}^{\NRop}$ has exactly $1$ half-edge on each vertex, we have an identification $\Z^{\Vrt(\Gamma)}\cong \Z^n$. 
Moreover, each $\Gamma$ corresponds to a cyclic ordering of the labels $1,\dots,n$ of the half-edges.

Consider a necklace curve $C$ in $\Mmb 1n$ of type $\Gamma\in T_{1,n}^{\NRop}$, corresponding to a cyclic ordering 
\[i_0{\prec} i_1{\prec} \cdots{\prec} i_{n-1}{\prec} i_n=i_0\] of the half-edges of $\Gamma$, i.e. the marked points that identify the irreducible components of $C$. (Choosing $ \succ$ rather than ${\prec}$ would give rise to an equivalent construction).  Proposition~\ref{f-for-fcjs} provides a bijection between the degree~$d$ \fcjs{} of $C$ and the functions 
\[
\begin{array}{rrcl}
f_{\overline{\mathcal J}^d(C)}\colon&\mathcal C_{n-1,\prec}&\longrightarrow&\Z
\end{array}
\]
defined on the set $\mathcal C_{n-1,\prec}:=\{I=\{i_{r},i_{r+1},\dots,i_s\}: 1\leq r\leq s\leq n-1\}$ of sequences of integers that are consecutive for the ordering ${\prec}$ that are mildly superadditive in the sense of \eqref{superadd}
for all disjoint sequences $I, J$ of  ${\prec}$-consecutive integers such that $I\cup J\in \mathcal C_{n-1,\prec}$. 

By the previous paragraph, the datum of an assignment  $\left\{(\Gamma,S_\Gamma)\colon\Gamma \in T_{1,n}^{\NRop}, S_\Gamma\subset \mathbb{Z}^{\operatorname{Vert}(\Gamma)}\right\}$  %
can be encoded in the datum of an assignment of functions $f_\Gamma$ for all $\Gamma\in T_{1,n}^{\NRop}$ or, equivalently, for all cyclic orderings $\prec$ of the set $\{1,\dots,n\}$.
Comparing the compatibility condition~\eqref{compexpl} with the formula for $f_\Gamma = f_{\overline{\mathcal J}^d(C)}$ given in Proposition~\ref{f-for-fcjs} yields that 
for every subset $I\subseteq\{1,\dots,n-1\}$, the value $f_{\Gamma}(I)$
 should be the same for all $\Gamma$ for which it is defined, i.e. for all cyclic orderings $\prec$ for which the elements of $I$ can be ordered in a consecutive way. This happens if and only if the dual graph $\Gamma$ admits a morphism to the dual graph of a curve $X_I$ as in Definition~\ref{def:fandg}.
As a consequence, we can glue together the functions $f_\Gamma\co \mathcal C_{n-1,\prec} \rightarrow \Z$
to a unique function $f$ defined on all nonempty subsets of $\{1,\dots,n-1\}$, which satisfies \eqref{superadd} since for every two disjoint subsets $I,J\in\mathcal P^+_{n-1}$ we can find a cyclic order ${\prec}$ such that $I,J,I\cup J\in \mathcal C_{n-1,\prec}$. 
The characterisation of $f$ given in Definition~\ref{def:fandg} is obtained by compatibility after contracting all edges of a graph $\Gamma \in T_{1,n}^{\NRop}$ joining either two vertices in $I$ or two vertices in $I^c$.
\end{proof}

Analogously to the situation for \fcjs{} of a single genus~$1$ curve, one may wonder if every \fcuj of genus $1$ can   be defined by some universal polarisation $\phi\in V^d_{1,n}$ (as defined in Section~\ref{notallpolar}). %
Below we  give examples to show that this is not always the case. In preparation for that, we recall the notation for universal polarisations and describe how a universal polarisation $\phi$ determines the mildly superadditive function $f$ associated with $\Jmb d1n(\phi)$.

\begin{remark}\label{rem:formulaforf}
The function $f$ arising from a \fcuj of the form $\Jmb d1n(\phi)$ for some nondegenerate $\phi\in V^d_{1,n}$ has a specific form in terms of the coordinates \[\phi=(x_1, \ldots, x_{n-1}, y_1, \ldots, y_{2^n-n-1})\] introduced in Proposition~\ref{polytopes} on the space of universal genus $1$ polarisations. For each $1 \leq i \leq n-1$ our choice of coordinate was fixed so that $\phi(\Gamma(X_i))=(x_i, d-x_i)$, where $X_i$ is any necklace curve with two components, and the first component is the one that carries the $i$-th marking alone.   

By the compatibility of $\phi$ with graph morphisms (Definition~\ref{phicompatible}), this generalises to any nonempty subset $I\subseteq\{1,\dots,n-1\}$. Namely, if we consider the necklace curve $X_I$ with two components $C_{I,1}$ and $C_{I,2}$, where $C_{I,1}$ is marked exactly with the points in $I$, then we have $\phi(\Gamma(X_I))_{C_{I,1}}=\sum_{i\in I}x_i$. By Definition~\ref{def:fandg}, the two degrees $f_{\Jmb d1n}(I)$ and $f_{\Jmb d1n}(I)+1$ are $\phi$-stable on the component $C_{I,1}$ of $X_I$, which by Definition~\ref{phistab} can be rephrased as 
\begin{equation}\label{mildsuperphi}
f_{\Jmb d1n(\phi)}(I)<\sum_{i\in I}x_i<f_{\Jmb d1n(\phi)}(I)+1.
\end{equation}
\end{remark}

We are now ready to produce examples of fine compactified universal Jacobians that are not arising from $\phi$-stability for any universal polarisation $\phi \in V_{1,n}^d$.

\begin{example}\label{exotic}
Fix integers $d\in \mathbb{Z}$ and
 $n\geq 6$ and set $g\co \mathcal Q^+_n\rightarrow\Z$ to be any sum zero function (for example, the zero function). Let $f\colon \mathcal P_{n-1}^+\rightarrow\Z$ be the function defined by
\begin{equation} \label{strangef}
f(I) = \left\{
\begin{array}{ll}
1&\text{if }\{1,3,5\}\subseteq I \text{ or }\{2,4,5\}\subseteq I\\
0&\text{else.}
\end{array}
\right.
\end{equation}

It is straightforward to check that the function $f$ is mildly superadditive as prescribed in \eqref{superadd}. We claim that for the \fcuj $\Jmb d1n$ that corresponds to the pair $(f,g)$ via Theorem~\ref{f-and-g}, there is no $\phi \in V_{1,n}^d$ such that $\Jmb d1n=\Jmb d1n(\phi)$.

Indeed let $\phi \in V_{1,n}^d$ and fix coordinates as in  Proposition~\ref{polytopes} so that $\phi=(x_1, \ldots, x_{n-1}, y_1, \ldots, y_{2^n-n-1})$. Then if we consider the constraints imposed by~\eqref{mildsuperphi} with  $I=\{1,3,5\}$ and  with $I=\{2,3,5\}$, we obtain  
\[
\left.
\begin{array}{l@{}l}
x_1+x_3+x_5 &> 1\\
x_2+x_3+x_5 &< 1\\
\end{array}
\right\} \Rightarrow x_1>x_2,
\]
whereas for  $I=\{1,4,5\}$ and $I=\{2,4,5\}$, we obtain 
\[
\left.
\begin{array}{l@{}l}
x_1+x_4+x_5 &< 1\\
x_2+x_4+x_5 &> 1\\
\end{array}
\right\} \Rightarrow x_1<x_2,
\]
which together yield a contradiction.

In Figure~\ref{9graphs} we show the degree $d=2$ polarisations on necklace curves with $n=6$ components that appear in $\Jmb 216$, up to the action of the subset of the symmetric group $\s_6$ generated by the permutations $(1\;3)$, $(2\;4)$ and $(1\;2)(3\;4)$.

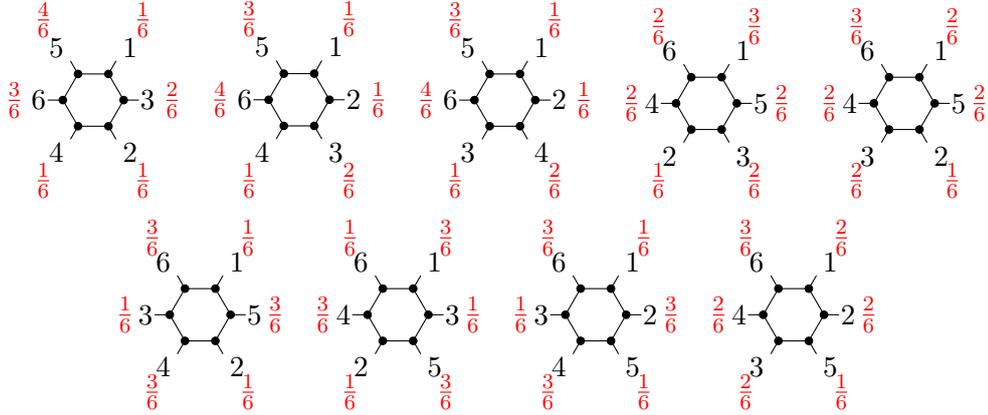
\begin{figure}[t!]
\begin{tikzpicture}
\small
   \newdimen\R
   \R=0.4cm
   \newdimen\T
   \T=0.35cm
   \newdimen\r
   \r=0.65cm
   \newdimen\S
   \S=0.6cm
   \draw (0:\R) \foreach \x in {60,120,...,360} {  -- (\x:\R) };
   \foreach \x/\l/\m in
     { 60/{1}/{$\frac16$},
      360/{3}/{$\frac26$},
      300/{2}/{$\frac16$},
      240/{4}/{$\frac16$},
      180/{6}/{$\frac36$},
      120/{5}/{$\frac46$}
     }
     {\node[inner sep=1pt,circle,draw,fill] at (\x:\R) {};
     \draw (\x:\R) -- (\x:\S);
     \node[label={\x:\l}] at (\x:\T) {};
     \node[label={[red]\x:\m}] at (\x:\r) {};}
\end{tikzpicture}
\begin{tikzpicture}
\small
   \newdimen\R
   \R=0.4cm
   \newdimen\T
   \T=0.35cm
   \newdimen\r
   \r=0.655cm
   \newdimen\S
   \S=0.6cm
   \draw (0:\R) \foreach \x in {60,120,...,360} {  -- (\x:\R) };
   \foreach \x/\l/\m in
     { 60/{1}/{$\frac16$},
      360/{2}/{$\frac16$},
      300/{3}/{$\frac26$},
      240/{4}/{$\frac16$},
      180/{6}/{$\frac46$},
      120/{5}/{$\frac36$}
     }
     {\node[inner sep=1pt,circle,draw,fill] at (\x:\R) {};
     \draw (\x:\R) -- (\x:\S);
     \node[label={\x:\l}] at (\x:\T) {};
     \node[label={[red]\x:\m}] at (\x:\r) {};}
\end{tikzpicture}
\begin{tikzpicture}
\small
   \newdimen\R
   \R=0.4cm
   \newdimen\T
   \T=0.35cm
   \newdimen\r
   \r=0.65cm
   \newdimen\S
   \S=0.6cm
   \draw (0:\R) \foreach \x in {60,120,...,360} {  -- (\x:\R) };
   \foreach \x/\l/\m in
     { 60/{1}/{$\frac16$},
      360/{2}/{$\frac16$},
      300/{4}/{$\frac26$},
      240/{3}/{$\frac16$},
      180/{6}/{$\frac46$},
      120/{5}/{$\frac36$}
     }
     {\node[inner sep=1pt,circle,draw,fill] at (\x:\R) {};
     \draw (\x:\R) -- (\x:\S);
     \node[label={\x:\l}] at (\x:\T) {};
     \node[label={[red]\x:\m}] at (\x:\r) {};}
\end{tikzpicture}
\begin{tikzpicture}
\small
   \newdimen\R
   \R=0.4cm
   \newdimen\T
   \T=0.35cm
   \newdimen\r
   \r=0.6cm
   \newdimen\S
   \S=0.6cm
   \draw (0:\R) \foreach \x in {60,120,...,360} {  -- (\x:\R) };
   \foreach \x/\l/\m in
     { 60/{1}/{$\frac36$},
      360/{5}/{$\frac26$},
      300/{3}/{$\frac26$},
      240/{2}/{$\frac16$},
      180/{4}/{$\frac26$},
      120/{6}/{$\frac26$}
     }
     {\node[inner sep=1pt,circle,draw,fill] at (\x:\R) {};
     \draw (\x:\R) -- (\x:\S);
     \node[label={\x:\l}] at (\x:\T) {};
     \node[label={[red]\x:\m}] at (\x:\r) {};}
\end{tikzpicture}
\begin{tikzpicture}
\small
   \newdimen\R
   \R=0.4cm
   \newdimen\T
   \T=0.35cm
   \newdimen\r
   \r=0.6cm
   \newdimen\S
   \S=0.6cm
   \draw (0:\R) \foreach \x in {60,120,...,360} {  -- (\x:\R) };
   \foreach \x/\l/\m in
     { 60/{1}/{$\frac26$},
      360/{5}/{$\frac26$},
      300/{2}/{$\frac16$},
      240/{3}/{$\frac26$},
      180/{4}/{$\frac26$},
      120/{6}/{$\frac36$}
     }
     {\node[inner sep=1pt,circle,draw,fill] at (\x:\R) {};
     \draw (\x:\R) -- (\x:\S);
     \node[label={\x:\l}] at (\x:\T) {};
     \node[label={[red]\x:\m}] at (\x:\r) {};}
\end{tikzpicture}
\begin{tikzpicture}
\small
   \newdimen\R
   \R=0.4cm
   \newdimen\T
   \T=0.35cm
   \newdimen\r
   \r=0.6cm
   \newdimen\S
   \S=0.6cm
   \draw (0:\R) \foreach \x in {60,120,...,360} {  -- (\x:\R) };
   \foreach \x/\l/\m in
     { 60/{1}/{$\frac16$},
      360/{5}/{$\frac36$},
      300/{2}/{$\frac16$},
      240/{4}/{$\frac36$},
      180/{3}/{$\frac16$},
      120/{6}/{$\frac36$}
     }
     {\node[inner sep=1pt,circle,draw,fill] at (\x:\R) {};
     \draw (\x:\R) -- (\x:\S);
     \node[label={\x:\l}] at (\x:\T) {};
     \node[label={[red]\x:\m}] at (\x:\r) {};}
\end{tikzpicture}
\bigskip
\begin{tikzpicture}
\small
   \newdimen\R
   \R=0.4cm
   \newdimen\T
   \T=0.35cm
   \newdimen\r
   \r=0.6cm
   \newdimen\S
   \S=0.6cm
   \draw (0:\R) \foreach \x in {60,120,...,360} {  -- (\x:\R) };
   \foreach \x/\l/\m in
     { 60/{1}/{$\frac36$},
      360/{3}/{$\frac16$},
      300/{5}/{$\frac36$},
      240/{2}/{$\frac16$},
      180/{4}/{$\frac36$},
      120/{6}/{$\frac16$}
     }
     {\node[inner sep=1pt,circle,draw,fill] at (\x:\R) {};
     \draw (\x:\R) -- (\x:\S);
     \node[label={\x:\l}] at (\x:\T) {};
     \node[label={[red]\x:\m}] at (\x:\r) {};}
\end{tikzpicture}
\begin{tikzpicture}
\small
   \newdimen\R
   \R=0.4cm
   \newdimen\T
   \T=0.35cm
   \newdimen\r
   \r=0.6cm
   \newdimen\S
   \S=0.6cm
   \draw (0:\R) \foreach \x in {60,120,...,360} {  -- (\x:\R) };
   \foreach \x/\l/\m in
     { 60/{1}/{$\frac16$},
      360/{2}/{$\frac36$},
      300/{5}/{$\frac16$},
      240/{4}/{$\frac36$},
      180/{3}/{$\frac16$},
      120/{6}/{$\frac36$}
     }
     {\node[inner sep=1pt,circle,draw,fill] at (\x:\R) {};
     \draw (\x:\R) -- (\x:\S);
     \node[label={\x:\l}] at (\x:\T) {};
     \node[label={[red]\x:\m}] at (\x:\r) {};}
\end{tikzpicture}
\begin{tikzpicture}
\small
   \newdimen\R
   \R=0.4cm
   \newdimen\T
   \T=0.35cm
   \newdimen\r
   \r=0.6cm
   \newdimen\S
   \S=0.6cm
   \draw (0:\R) \foreach \x in {60,120,...,360} {  -- (\x:\R) };
   \foreach \x/\l/\m in
     { 60/{1}/{$\frac26$},
      360/{2}/{$\frac26$},
      300/{5}/{$\frac16$},
      240/{3}/{$\frac26$},
      180/{4}/{$\frac26$},
      120/{6}/{$\frac36$}
     }
     {\node[inner sep=1pt,circle,draw,fill] at (\x:\R) {};
     \draw (\x:\R) -- (\x:\S);
     \node[label={\x:\l}] at (\x:\T) {};
     \node[label={[red]\x:\m}] at (\x:\r) {};}
\end{tikzpicture}
\caption{\label{9graphs}Assignments of stability conditions on necklace graphs in Example~\ref{exotic} for $d=2$, $n=6$}
\end{figure}
\end{example}
We conclude with an observation on the problem of classification of fine compactified universal Jacobians modulo isomorphisms in light of the constructions of \cite[Section~6.2]{kp3}.

\begin{remark} \label{groupaction}
The collection of mildly superadditive functions $f\colon \mathcal{P}^+_{n} \to \mathbb{Z}$ can be interpreted, via Theorem~\ref{f-and-g}, as the equivalence classes of degree~$d$ genus $1$ fine compactified universal Jacobians modulo isomorphisms that extend the identity map $\mathcal{J}_{1,n}^d\to\mathcal{J}_{1,n}^d$ and that commute with the forgetful morphisms. This follows from the same argument given in \cite[Corollary~6.14]{kp3}.

Another natural question is to ask for the number of equivalence classes  of degree~$d$ genus $1$ fine compactified universal Jacobians modulo translation by some line bundle on $\Cmb 1n / \Mmb 1n$ of relative degree $0$. Theorem~\ref{f-and-g} gives a bijection between these equivalence classes and the collection  of mildly superadditive functions $f\colon \mathcal{P}^+_{n} \to \mathbb{Z}$ with the additional property that $f(\{i\})=0$ for all $1\leq i \leq n-1$.
Computing the number of these functions appears to be a challenging combinatorial problem.

\end{remark}

\bibliographystyle{alpha}
\bibliography{biblio-curves}

\begin{table}[b]
{\tiny
     \begin{tabular}{ll}
     $
    \begin{array}[t]{lll}
    n  & \lambda & \langle \hodgechar n {\Jmb d1n},s_\lambda\rangle\\\hline
1&  {1}      &L^2 + 2L + 1\\
2&  {2}      &L^3 + 3L^2 + 3L + 1\\
&  {1, 1}   &L^2 + L\\
3&  {3}      &L^4 + 4L^3 + 7L^2 + 4L + 1\\
&  {2, 1}   &2L^3 + 4L^2 + 2L\\
4&  {4}      &L^5 + 5L^4 + 13L^3 + 13L^2 + 5L + 1\\
&  {3, 1}   &3L^4 + 10L^3 + 10L^2 + 3L\\[1ex]
  n & \lambda & \langle\hodgechar n {\Mmb 1n},s_{\lambda}\rangle\\\hline
2&  {2}&       L^2 + 2L + 1\\
3&  {3}&       L^3 + 3L^2 + 3L + 1\\
&  {2, 1}&    L^2 + L\\
4&  {4}&       L^4 + 4L^3 + 7L^2 + 4L + 1\\
&  {3, 1}&    2L^3 + 4L^2 + 2L\\
&  {2, 2}&    L^3 + 2L^2 + L\\
5&  {5}&       L^5 + 5L^4 + 12L^3 + 12L^2 + 5L + 1\\
&  {4, 1}&    3L^4 + 11L^3 + 11L^2 + 3L\\
&  {3, 2}&    2L^4 + 7L^3 + 7L^2 + 2L\\
\end{array}$
&
     $\begin{array}[t]{lll}
    n  & \lambda & \langle \hodgechar n {\Jmb d1n},s_\lambda\rangle\\\hline
4&  {2, 2}   &L^4 + 4L^3 + 4L^2 + L\\
&  {2, 1, 1}&L^3 + L^2\\
5&  {5}      &L^6 + 6L^5 + 21L^4 + 31L^3 + 21L^2 + 6L + 1\\
&  {4, 1}   &4L^5 + 20L^4 + 34L^3 + 20L^2 + 4L\\
&  {2, 2, 1}&2L^4 + 4L^3 + 2L^2\\
&  {3, 2}   &2L^5 + 12L^4 + 21L^3 + 12L^2 + 2L\\
&  {3, 1, 1}&3L^4 + 7L^3 + 3L^2
 \\[1ex]
 n & \lambda & \langle\hodgechar n {\Mmb 1n},s_{\lambda}\rangle\\\hline
5&  {3, 1, 1}& L^3 + L^2\\
&  {2, 2, 1}& L^3 + L^2\\
6&  {6}&       L^6 + 6L^5 + 20L^4 + 29L^3 + 20L^2 + 6L + 1\\
&  {5, 1}&    4L^5 + 20L^4 + 33L^3 + 20L^2 + 4L\\
&  {4, 2}&    3L^5 + 18L^4 + 31L^3 + 18L^2 + 3L\\
&  {4, 1, 1}& 4L^4 + 9L^3 + 4L^2\\
&  {3, 3}&    L^5 + 6L^4 + 11L^3 + 6L^2 + L\\
&  {3, 2, 1}& 4L^4 + 8L^3 + 4L^2\\
&  {2, 2, 2}& L^4 + 2L^3 + L^2\\
\end{array}$
\end{tabular}}
\caption{Hodge Euler characteristic of $\Jmb d1n$ and $\Mmb 1{n+1}$ for $n\leq 5$.   \label{table-jmb-1}}
\end{table}

\begin{table}
{\tiny
$\begin{array}{lll}
    n  & \lambda & \langle \hodgechar n {\Jmb d1n},s_\lambda\rangle\\\hline
6&  {6}            &L^7 + 7L^6 + 31L^5 + 63L^4 + 63L^3 + 31L^2 + 7L + 1\\
&  {5, 1}         &5L^6 + 33L^5 + 82L^4 + 82L^3 + 33L^2 + 5L\\
&  {4, 2}         &3L^6 + 26L^5 + 71L^4 + 71L^3 + 26L^2 + 3L\\
&  {4, 1, 1}      &7L^5 + 27L^4 + 27L^3 + 7L^2\\
&  {3, 3}         &L^6 + 9L^5 + 26L^4 + 26L^3 + 9L^2 + L\\
&  {3, 2, 1}      &6L^5 + 23L^4 + 23L^3 + 6L^2\\
&  {3, 1, 1, 1}   &L^4 + L^3\\
&  {2, 2, 2}      &L^5 + 4L^4 + 4L^3 + L^2\\
&  {2, 2, 1, 1}   &L^4 + L^3\\
7&  {7}            &L^8 + 8L^7 + 42L^6 + 110L^5 + 154L^4 + 110L^3 + 42L^2 + 8L + 1\\
&  {6, 1}         &6L^7 + 50L^6 + 167L^5 + 247L^4 + 167L^3 + 50L^2 + 6L\\
&  {5, 2}         &4L^7 + 44L^6 + 168L^5 + 262L^4 + 168L^3 + 44L^2 + 4L\\
&  {5, 1, 1}      &12L^6 + 67L^5 + 112L^4 + 67L^3 + 12L^2\\
&  {4, 3}         &2L^7 + 26L^6 + 108L^5 + 172L^4 + 108L^3 + 26L^2 + 2L\\
&  {4, 2, 1}      &14L^6 + 83L^5 + 146L^4 + 83L^3 + 14L^2\\
&  {4, 1, 1, 1}   &6L^5 + 14L^4 + 6L^3\\
&  {3, 3, 1}      &5L^6 + 33L^5 + 58L^4 + 33L^3 + 5L^2\\
&  {3, 2, 2}      &3L^6 + 20L^5 + 37L^4 + 20L^3 + 3L^2\\
&  {3, 2, 1, 1}   &6L^5 + 14L^4 + 6L^3\\
&  {2, 2, 2, 1}   &2L^5 + 4L^4 + 2L^3\\
8&  {8}            &L^9 + 9L^8 + 55L^7 + 177L^6 + 322L^5 + 322L^4 + 177L^3 + 55L^2 + 9L + 1\\
&  {7, 1}         &7L^8 + 69L^7 + 293L^6 + 596L^5 + 596L^4 + 293L^3 + 69L^2 + 7L\\
&  {6, 2}         &5L^8 + 68L^7 + 337L^6 + 739L^5 + 739L^4 + 337L^3 + 68L^2 + 5L\\
&  {6, 1, 1}      &19L^7 + 139L^6 + 342L^5 + 342L^4 + 139L^3 + 19L^2\\
&  {5, 3}         &3L^8 + 48L^7 + 267L^6 + 618L^5 + 618L^4 + 267L^3 + 48L^2 + 3L\\
&  {5, 2, 1}      &24L^7 + 199L^6 + 530L^5 + 530L^4 + 199L^3 + 24L^2\\
&  {5, 1, 1, 1}   &16L^6 + 62L^5 + 62L^4 + 16L^3\\
&  {4, 4}         &L^8 + 19L^7 + 109L^6 + 257L^5 + 257L^4 + 109L^3 + 19L^2 + L\\
&  {4, 3, 1}      &16L^7 + 145L^6 + 401L^5 + 401L^4 + 145L^3 + 16L^2\\
&  {4, 2, 2}      &7L^7 + 69L^6 + 202L^5 + 202L^4 + 69L^3 + 7L^2\\
&  {4, 2, 1, 1}   &25L^6 + 100L^5 + 100L^4 + 25L^3\\
&  {4, 1, 1, 1, 1}&2L^5 + 2L^4\\
&  {3, 3, 2}      &3L^7 + 34L^6 + 103L^5 + 103L^4 + 34L^3 + 3L^2\\
&  {3, 3, 1, 1}   &10L^6 + 42L^5 + 42L^4 + 10L^3\\
&  {3, 2, 2, 1}   &9L^6 + 37L^5 + 37L^4 + 9L^3\\
&  {3, 2, 1, 1, 1}&2L^5 + 2L^4\\
&  {2, 2, 2, 2}   &L^6 + 4L^5 + 4L^4 + L^3\\
&  {2, 2, 2, 1, 1}&L^5 + L^4\\
  \end{array}$
  }
\caption{Hodge Euler characteristic of $\Jmb d1n$ for $6\leq n\leq 8$.   \label{table-jmb}}
\end{table}

\begin{table}
  {\tiny
     $\begin{array}{lll}
 n & \lambda & \langle\hodgechar n {\Mmb 1n},s_{\lambda}\rangle\\\hline
7&  {7}&             L^7 + 7L^6 + 28L^5 + 56L^4 + 56L^3 + 28L^2 + 7L + 1\\
&  {6, 1}&          5L^6 + 34L^5 + 81L^4 + 81L^3 + 34L^2 + 5L\\
&  {5, 2}&          4L^6 + 32L^5 + 85L^4 + 85L^3 + 32L^2 + 4L\\
&  {5, 1, 1}&       8L^5 + 29L^4 + 29L^3 + 8L^2\\
&  {4, 3}&          2L^6 + 20L^5 + 56L^4 + 56L^3 + 20L^2 + 2L\\
&  {4, 2, 1}&       11L^5 + 41L^4 + 41L^3 + 11L^2\\
&  {4, 1, 1, 1}&    2L^4 + 2L^3\\
&  {3, 3, 1}&       4L^5 + 16L^4 + 16L^3 + 4L^2\\
&  {3, 2, 2}&       3L^5 + 11L^4 + 11L^3 + 3L^2\\
&  {3, 2, 1, 1}&    2L^4 + 2L^3\\
&  {2, 2, 2, 1}&    L^4 + L^3\\
8&  {8}&             L^8 + 8L^7 + 39L^6 + 98L^5 + 136L^4 + 98L^3 + 39L^2 + 8L + 1\\
&  {7, 1}&          6L^7 + 49L^6 + 159L^5 + 232L^4 + 159L^3 + 49L^2 + 6L\\
&  {6, 2}&          5L^7 + 53L^6 + 193L^5 + 294L^4 + 193L^3 + 53L^2 + 5L\\
&  {6, 1, 1}&       14L^6 + 73L^5 + 119L^4 + 73L^3 + 14L^2\\
&  {5, 3}&          3L^7 + 38L^6 + 155L^5 + 243L^4 + 155L^3 + 38L^2 + 3L\\
&  {5, 2, 1}&       20L^6 + 115L^5 + 195L^4 + 115L^3 + 20L^2\\
&  {5, 1, 1, 1}&    8L^5 + 17L^4 + 8L^3\\
&  {4, 4}&          L^7 + 16L^6 + 66L^5 + 105L^4 + 66L^3 + 16L^2 + L\\
&  {4, 3, 1}&       14L^6 + 87L^5 + 150L^4 + 87L^3 + 14L^2\\
&  {4, 2, 2}&       7L^6 + 45L^5 + 80L^4 + 45L^3 + 7L^2\\
&  {4, 2, 1, 1}&    14L^5 + 30L^4 + 14L^3\\
&  {3, 3, 2}&       3L^6 + 22L^5 + 39L^4 + 22L^3 + 3L^2\\
&  {3, 3, 1, 1}&    6L^5 + 13L^4 + 6L^3\\
&  {3, 2, 2, 1}&    6L^5 + 12L^4 + 6L^3\\
&  {2, 2, 2, 2}&    L^5 + 2L^4 + L^3\\
9 &  {9}&             L^9 + 9L^8 + 50L^7 + 157L^6 + 278L^5 + 278L^4 + 157L^3 + 50L^2 + 9L + 1\\
&  {8, 1}&          7L^8 + 69L^7 + 279L^6 + 554L^5 + 554L^4 + 279L^3 + 69L^2 + 7L\\
&  {7, 2}&          6L^8 + 76L^7 + 364L^6 + 775L^5 + 775L^4 + 364L^3 + 76L^2 + 6L\\
&  {7, 1, 1}&       21L^7 + 147L^6 + 351L^5 + 351L^4 + 147L^3 + 21L^2\\
&  {6, 3}&          4L^8 + 65L^7 + 348L^6 + 779L^5 + 779L^4 + 348L^3 + 65L^2 + 4L\\
&  {6, 2, 1}&       33L^7 + 259L^6 + 657L^5 + 657L^4 + 259L^3 + 33L^2\\
&  {6, 1, 1, 1}&    21L^6 + 74L^5 + 74L^4 + 21L^3\\
&  {5, 4}&          2L^8 + 37L^7 + 214L^6 + 497L^5 + 497L^4 + 214L^3 + 37L^2 + 2L\\
&  {5, 3, 1}&       27L^7 + 242L^6 + 647L^5 + 647L^4 + 242L^3 + 27L^2\\
&  {5, 2, 2}&       12L^7 + 116L^6 + 320L^5 + 320L^4 + 116L^3 + 12L^2\\
&  {5, 2, 1, 1}&    42L^6 + 155L^5 + 155L^4 + 42L^3\\
&  {5, 1, 1, 1, 1}& 3L^5 + 3L^4\\
&  {4, 4, 1}&       12L^7 + 109L^6 + 293L^5 + 293L^4 + 109L^3 + 12L^2\\
&  {4, 3, 2}&       10L^7 + 109L^6 + 315L^5 + 315L^4 + 109L^3 + 10L^2\\
&  {4, 3, 1, 1}&    35L^6 + 132L^5 + 132L^4 + 35L^3\\
&  {4, 2, 2, 1}&    25L^6 + 96L^5 + 96L^4 + 25L^3\\
&  {4, 2, 1, 1, 1}& 6L^5 + 6L^4\\
&  {3, 3, 3}&       L^7 + 15L^6 + 45L^5 + 45L^4 + 15L^3 + L^2\\
&  {3, 3, 2, 1}&    13L^6 + 51L^5 + 51L^4 + 13L^3\\
&  {3, 3, 1, 1, 1}& 3L^5 + 3L^4\\
&  {3, 2, 2, 2}&    4L^6 + 15L^5 + 15L^4 + 4L^3\\
&  {3, 2, 2, 1, 1}& 3L^5 + 3L^4\\
&  {2, 2, 2, 2, 1}& L^5 + L^4\\
\end{array}$
  }
  \caption{Hodge Euler characteristics of $\Mmb 1n$ for $7\leq n\leq 9$. \label{table-mmb}}
\end{table}

\end{document}